\documentclass[12 pt,twoside, a4paper]{article}
\usepackage[utf8]{inputenc}
\usepackage[T1]{fontenc}
\usepackage[english]{babel}
\usepackage{amsmath}
\usepackage{amssymb}
\usepackage{multirow}
\usepackage{mathtools}
\usepackage{amsthm}
\usepackage[new]{old-arrows}
\theoremstyle{plain}

\usepackage{xfrac}

\usepackage{amsfonts}
\usepackage{graphicx}
\usepackage[left=2cm,right=2cm,top=2cm,bottom=2cm]{geometry}
\usepackage{hyperref}
\usepackage{pgf}
\usepackage{tikz}
\usepackage{tkz-fct}
\usepackage[all,cmtip]{xy}
\usepackage{tikz-cd}
\usetikzlibrary{intersections,arrows,chains,matrix,positioning,scopes}

\makeatletter
\let\oldfootnote\footnote
\def\footnote{\@ifstar\footnote@star\footnote@nostar}
\def\footnote@star#1{{\let\thefootnote\relax\footnotetext{#1}}}
\def\footnote@nostar{\oldfootnote}
\makeatother

\DeclareMathOperator{\Hom}{Hom}

\DeclareMathOperator{\pr}{pr}

\DeclareMathOperator{\rk}{rk}
\DeclareMathOperator{\com}{com}
\DeclareMathOperator{\Sch}{Sch}
\DeclareMathOperator{\End}{End}

\DeclareMathOperator{\ann}{ann}

\DeclareMathOperator{\Sym}{Sym}
\DeclareMathOperator{\ad}{ad}

\DeclareMathOperator{\Prim}{Prim}

\DeclareMathOperator{\AL}{AL}

\DeclareMathOperator{\im}{im}
\DeclareMathOperator{\id}{id}
\DeclareMathOperator{\imsc}{imsc}
\DeclareMathOperator{\Spec}{Spec}
\DeclareMathOperator{\Set}{Set}

\DeclareMathOperator{\ab}{ab}
\DeclareMathOperator{\res}{res}
\DeclareMathOperator{\GL}{GL}

\DeclareMathOperator{\grade}{grade}
\DeclareMathOperator{\Stab}{Stab}
\DeclareMathOperator{\Irr}{Irr}
\DeclareMathOperator{\projdim}{projdim}
\DeclareMathOperator{\ev}{ev}

\DeclareMathOperator{\coker}{coker}

\DeclareMathOperator{\Lie}{Lie}

\DeclareMathOperator{\car}{char}
\DeclareMathOperator{\Hache}{H}

\DeclareMathOperator{\Frob}{Frob} 

\DeclareMathOperator{\Span}{Span}
\DeclareMathOperator{\Specr}{\boldsymbol{\Spec}}

\newcommand{\spref}[1]{\cite[\href{http://stacks.math.columbia.edu/tag/#1}{Tag~#1}]{SP22}}

\newenvironment{commentaire}[1]{}{}

\makeatletter
\tikzset{join/.code=\tikzset{after node path={%
\ifx\tikzchainprevious\pgfutil@empty\else(\tikzchainprevious)%
edge[every join]#1(\tikzchaincurrent)\fi}}}
\makeatother

\tikzset{>=stealth',every on chain/.append style={join},
         every join/.style={->}}

\everymath{\displaystyle}

\makeatletter

\newcommand*{\relrelbarsep}{.386ex}
\newcommand*{\relrelbar}{%
  \mathrel{%
    \mathpalette\@relrelbar\relrelbarsep
  }%
}
\newcommand*{\@relrelbar}[2]{%
  \raise#2\hbox to 0pt{$\m@th#1\relbar$\hss}%
  \lower#2\hbox{$\m@th#1\relbar$}%
}
\providecommand*{\rightrightarrowsfill@}{%
  \arrowfill@\relrelbar\relrelbar\rightrightarrows
}
\providecommand*{\leftleftarrowsfill@}{%
  \arrowfill@\leftleftarrows\relrelbar\relrelbar
}
\providecommand*{\xrightrightarrows}[2][]{%
  \ext@arrow 0359\rightrightarrowsfill@{#1}{#2}%
}
\providecommand*{\xleftleftarrows}[2][]{%
  \ext@arrow 3095\leftleftarrowsfill@{#1}{#2}%
}
\makeatother

\newcommand{\p}{^{[p]}}
\newcommand*{\cHom}{\mathcal{H}\kern -.5pt om}
\newcommand*{\cEnd}{\mathcal{E}\kern -.5pt nd}

\newcommand{\cro}{[\cdot,\cdot]}

\newcommand{\A}{\mathbb{A}}
\newcommand{\Z}{\mathbb{Z}}
\newcommand{\Q}{\mathbb{Q}}

\newcommand{\C}{\mathbb{C}}

\newcommand{\cF}{\mathcal{F}}
\newcommand{\cK}{\mathcal{K}}

\newcommand{\cL}{\mathcal{L}}
\newcommand{\cZ}{\mathcal{Z}}
\newcommand{\cQ}{\mathcal{Q}}

\newcommand{\E}{\mathcal{E}}

\newcommand{\isor}{ \buildrel\sim\over\longrightarrow}

\newcommand{\rL}{\mathrm L}
\newcommand{\rE}{\mathrm E}
\newcommand{\rp}{\mathrm p}
\newcommand{\rt}{\mathrm t}
\newcommand{\rSL}{\mathrm{SL}}
\newcommand{\rd}{\mathrm d}
\newcommand{\rM}{\mathrm M}
\newcommand{\rU}{\mathrm U}


\def\fab{\mathfrak{ab}}
\def\fg{\mathfrak{g}}
\def\fh{\mathfrak{h}}
\def\fl{\mathfrak{l}}

\def\fz{\mathfrak{z}}
\def\fs{\mathfrak{s}}
\def\fsl{\mathfrak{sl}}
\def\fgl{\mathfrak{gl}}
\newcommand{\fpsl}{\mathfrak{psl}}

\makeatletter
\newcommand*{\defeq}{\mathrel{\rlap{%
                      \raisebox{0.3ex}{$\m@th\cdot$}}%
                      \raisebox{-0.3ex}{$\m@th\cdot$}}%
                      =}
\makeatother




\newtheorem{counter}[subsubsection]{$\!\!$}
\newtheorem{subcounter}[subsection]{$\!\!$}
\newcounter{intro}
\renewcommand{\theintro}{\Alph{intro}}
\newenvironment{definition}{\begin{counter} \rm {\bf Definition.}}{\end{counter}}
\newenvironment{proposition}{\begin{counter} {\bf Proposition.}}{\end{counter}}
\newenvironment{theorem}{\begin{counter} {\bf Theorem.}}{\end{counter}}
\newenvironment{definition*}{\begin{subcounter} \rm {\bf Definition.}}{\end{subcounter}}

\newenvironment{theorem-intro}
{\refstepcounter{intro}\bigskip\noindent {\bf  Theorem~\theintro.}\em}{\rm\bigskip}

\newenvironment{lemma}{\begin{counter} {\bf Lemma.}}{\end{counter}}
\newenvironment{lemma*}{\begin{subcounter} {\bf Lemma.}}{\end{subcounter}}

\newenvironment{corollary}{\begin{counter} {\bf Corollary.}}{\end{counter}}
\newenvironment{corollary*}{\begin{subcounter} {\bf Corollary.}}{\end{subcounter}}
\newenvironment{remark}{\begin{counter} \rm {\bf Remark.}}{\end{counter}}
\newenvironment{remark*}{\begin{counter} \rm {\bf Remark.}}{\end{counter}}

\newenvironment{example}{\begin{counter} \rm {\bf Example.}}{\end{counter}}

\newenvironment{counterexample}{\begin{counter} \rm {\bf Counter-example.}}{\end{counter}}

\newtheorem{theoreme}{Theorem}

\newenvironment{notitle}[1]{\begin{counter} {\bf #1.}\rm}{\end{counter}}

\begin{document}
\begin{center}
    \large \textbf{Moduli of Lie $p$-algebras}
    \normalsize\\
    \bigskip
    \large \textbf{Alice Bouillet}
\end{center}
\bigskip

\footnote*{
2020 Mathematics Subject Classification:
Primary: 17B50
Secondary: 17B45, 14L15, 14M06}

\footnote*{
Keywords: restricted Lie algebra, moduli space, group scheme of height one,
positive characteristic}

\small  \noindent \textbf{Abstract.} In this paper, we study moduli spaces of finite-dimensional Lie algebras
with flat center, proving that the forgetful map from Lie $p$-algebras
to Lie algebras is an affine fibration, and we point out a new case of existence of a $p$-mapping. Then we illustrate these results for the special
case of Lie algebras of rank $3$, whose moduli space we build and study over $\mathbb{Z}$.  We extend the classical equivalence of categories between locally free Lie $p$-algebras of finite rank with finite locally free group schemes of height $1$, showing that the centers of these
objects correspond to each other. We finish by analysing the smoothness of the moduli of Lie $p$-algebras of rank $3$,
in particular identifying some smooth components.

\bigskip

\section{Introduction} 
Infinitesimal group schemes over a field $k$ of prime characteristic $p>0$
are those which are finite and connected. Their consideration is crucial
to the study of categories of algebraic groups where they enter the picture
as kernels, intersections or centers and are essential to the good
properties of such categories. They are also of fundamental importance
to the study of individual algebraic groups. Specifically, in the last
decade it has been shown that many aspects of a reductive group $G$ are
reflected by the collection $\{G_r\}$ of its Frobenius kernels, which are
typical examples of infinitesimal group schemes. Providing an exhaustive
survey of these developments with proper attribution would take us too far
away, so we content ourselves with a mention of some of these aspects
with minimal bibliographic indications:
\begin{itemize}
\item[-] geometry: the collection $\{G_r\}$ recovers the universal cover
of $G$, see \cite{Su78};
\item[-] representation theory: the restrictions of suitable simple
$G$-modules provide simple $G_r$-modules, and all simple $G_r$-modules
arise in this way, see \cite{Ja03}, \S~II.3;
\item[-] cohomology: the cohomology of a $G$-module is the inverse limit
of the cohomologies of the associated restricted $G_r$-modules, see
\cite{FP87}.
\end{itemize}
One aspect that is less documented is the moduli theory of infinitesimal
groups. Let us recall that the {\em height} of an infinitesimal group
over a field is the least integer power $h\ge 0$ of Frobenius that
kills it, and introduce the moduli functor $\mathcal{G}_n^h$ of finite flat
infinitesimal group schemes of order $p^n$ and height~$h$ (a variant
would be to consider groups of height bounded by~$h$). Although
Dieudonn\'e theory and its modern successors provide descriptions of
$\mathcal{G}_n^h(R)$ over sufficiently nice
rings~$R$, almost nothing is known over general bases.

Our aim in this article is to study this moduli problem in the case
where $h=1$, and for simplicity we write $\mathcal{G}_n:=\mathcal{G}_n^1$. This is of
course by far the easiest case, because if we write $S\rightarrow
\Spec(\mathbb{F}_p)$ for a base scheme, and $p$-$\mathcal{L}ie_n(S)$ the category
of $n$-dimensional restricted $\mathcal{O}_S$-Lie algebras, then the functor $\Lie$ gives us an
equivalence: 
$$\Lie: \mathcal{G}_n(S)\overset{\thicksim}{\longrightarrow} p\text{-}\mathcal{L}ie_n(S).$$
 
We are thus reduced to studying the moduli of finite-dimensional
Lie algebras and $p$-mappings on them. Our work is divided in two parts:
in the first half of the paper we study the theoretical aspects, and in
the second half we study in detail the three-dimensional case. In the first part we study a Lie algebra $L$ over a scheme $S$, that is, a
vector bundle equipped with a bracket satisfying the Jacobi condition. The
difference of two $p$-mappings on $L$ takes its values in the center $Z(L)$,
which for this reason plays a key role. Our first main result is obtained
after restriction to the flattening stratification $S^*\to S$ of the center,
and is stated as follows.

\begin{theoreme}
Let $L\rightarrow S$ be a Lie algebra vector bundle.
Let us define the functor $X=~X(L)$ of the $p$-mappings on $L$, i.e. $X(T)=\{p$-mappings on $L\times_S T \}$ for all $S$-schemes $T$. Let $\Frob:S\rightarrow S$ be the Frobenius morphism. Then, $X$ is representable by an affine scheme, and is a formally homogeneous space under $\rE~\defeq~\Hom(\Frob_S^*L,Z(L))$.

\noindent Now let us define the restrictable locus of $L$ as follows:
\begin{align*}S^{\res}=S^{\res}(L):
\{S\text{-schemes}\} & \longrightarrow  \Set \\ 
T  &\longmapsto
\begin{cases} 
\{\emptyset\}\text{ if } L_T\text{ is restrictable over }T \\ 
\hspace{0.22cm}\emptyset \text{ otherwise.}
\end{cases}
\end{align*} 
Then if we suppose $Z(L)\rightarrow S$ flat, the following two conditions are verified:
\begin{enumerate}
\item[{\rm 1.}] $S^{\res}$ is representable by a closed subscheme of $S$.
\item[{\rm 2.}] $X\rightarrow S$ factors through $S^{\res}$ and $X\rightarrow S^{\res}$ is an affine space under the vector bundle $\rE\times_S S^{\res}$. 
\end{enumerate}
\end{theoreme}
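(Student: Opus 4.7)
The plan is to dispatch the three assertions separately, relying on a common recipe: translate each Lie-theoretic condition into a closed or linear condition on sections of explicit $\Ocal_S$-modules, and then apply standard representability.

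First, I would construct $X$ and the $\rE$-action. Identifying a $p$-mapping on $L$ with an $\Ocal_S$-linear morphism $\varphi: \Frob_S^* L \to L$ (i.e. a Frobenius-linear self-map of $L$) satisfying Jacobson's identity $\varphi(x+y) = \varphi(x) + \varphi(y) + \sum_{i=1}^{p-1} s_i(x,y)$ and $\ad(\varphi(x)) = \ad(x)^p$, I view the space of Frobenius-linear maps as the affine $S$-scheme underlying the vector bundle $\cHom(\Frob_S^* L, L)$; the two identities cut out a closed subscheme (they are polynomial in the structure constants and in the coefficients of $\varphi$), so $X$ is affine over $S$. Adding a central Frobenius-linear map $\alpha \in \rE(T)$ to a $p$-mapping $\varphi$ preserves the Jacobson identity (since $\alpha$ is additive and the $s_i$'s depend only on the bracket) and the equation $\ad(\varphi(x)) = \ad(x)^p$ (since $\alpha$ is central), so $\varphi + \alpha$ is again a $p$-mapping; conversely, the difference of two $p$-mappings is manifestly Frobenius-linear and centrally valued. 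This shows $\rE \times_S X \to X \times_S X$ is an isomorphism, i.e. $X$ is formally homogeneous under $\rE$.

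Next, I would build $S^{\res}$ as the vanishing locus of an obstruction. The heart of the argument is Jacobson's formula $(A+B)^p - A^p - B^p = \sum_{i=1}^{p-1} s_i(A,B)$ in the associative algebra $\End(L)$, whose correction terms are iterated Lie commutators. Specialising to $A = \ad(x)$, $B = \ad(y)$ and using $[\ad(u), \ad(v)] = \ad([u,v])$, these commutators land in $\ad(L) \subseteq \End(L)$. Therefore $x \mapsto \ad(x)^p \bmod \ad(L)$ is additive and $p$-semilinear, giving an $\Ocal_S$-linear morphism
\[
\Psi: \Frob_S^* L \longrightarrow \End(L)/\ad(L).
\]
The flatness hypothesis on $Z(L)$ enters decisively here: it ensures that $\ad(L) \cong L/Z(L)$ is a sub-vector-bundle of $\End(L)$, so $\End(L)/\ad(L)$ is finitely presented and its formation commutes with base change. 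The restrictability of $L_T$ is then equivalent to the vanishing of $\Psi_T$, and since $\Frob_S^*L$ is locally free of finite rank, this vanishing locus is representable by a closed subscheme of $S$, which is $S^{\res}$.

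The affine-space structure on $X \to S^{\res}$ comes cheaply once the first two steps are done. Factoring is tautological (any $\varphi \in X(T)$ witnesses the restrictability of $L_T$), and over $S^{\res}$ the vanishing of $\Psi$ means that Zariski-locally the equations $\ad(y_i) = \ad(e_i)^p$ admit solutions for a local basis $(e_i)$ of $L$; Jacobson's extension theorem then produces a local $p$-mapping $\varphi_0$, and the formal homogeneity from the first step yields a trivialisation $\rE \isor X$, $\alpha \mapsto \varphi_0 + \alpha$. Thus $X \to S^{\res}$ is a Zariski-locally trivial torsor under $\rE \times_S S^{\res}$, i.e. an affine space under that vector bundle. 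The hard part will be the careful handling of $\ad(L)$ and its cokernel in $\End(L)$ under arbitrary base change: without the flatness of $Z(L)$, the subsheaf $\ad(L)$ need not base-change correctly, the map $\Psi$ does not descend to a morphism of sheaves, and the restrictable locus need not be closed. Controlling this, and lifting Jacobson's formulas from the classical field case to the sheaf-theoretic setting, is where most of the genuine work sits.
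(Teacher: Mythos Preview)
Your argument for the representability of $X$ contains a genuine confusion. A $p$-mapping is \emph{not} an element of $\cHom(\Frob_S^*L,L)$: axiom (AL3) says precisely that it fails to be additive. If you literally impose (AL3) on an additive map $\varphi\in\cHom(\Frob_S^*L,L)$, the condition degenerates to $\sum_i s_i(x,y)=0$ for all $x,y$, which is a condition on the bracket and is generically false. What saves the situation is Jacobson's theorem: a $p$-mapping is determined by its values on a local basis $(e_i)$, and any tuple $(y_i)$ with $\ad_{y_i}=\ad_{e_i}^p$ extends \emph{uniquely} to a $p$-mapping. So the correct embedding is $X\hookrightarrow L^n$ (equivalently $\cHom(\Frob_S^*L,L)$ via the basis) cut out only by the equations $\ad_{y_i}=\ad_{e_i}^p$; this is exactly the fibre product the paper writes down. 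You invoke Jacobson later for local sections of $X\to S^{\res}$ but you need it already here, and once you use it the spurious condition (AL3) disappears from the construction.

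Your construction of $S^{\res}$ via the obstruction map $\Psi:\Frob_S^*L\to\End(L)/\ad(L)$ is correct and is in fact a different, somewhat more direct route than the paper's. The paper instead shows that the locus where the image $I=\ad(L)$ is stable under the $p$-th power map $\rho:I\to\End(L)$ is closed, by applying the Weil-restriction representability theorem from SGA3 (expos\'e~VIII) to the closed subscheme $\rho^{-1}(I)\hookrightarrow I$. Your approach packages the same information as the vanishing of a single morphism of locally free sheaves, which avoids the appeal to SGA3; the paper's approach has the advantage of not needing to verify by hand (via the associative Jacobson formula in $\End(L)$) that $\Psi$ is additive. Both rely in the same essential way on the flatness of $Z(L)$ to make $\ad(L)$ a subbundle whose formation commutes with base change.
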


\noindent It follows in particular that if $Z(L)$ is flat over $S$, then $X \to S^{\res}$ is
smooth. An
interesting question is to know whether this affine space fibration has
global
sections. In general this is not the case. We provide a positive result
under
a condition on the derived Lie algebra $L' \subset L$. Namely, we move to the
next dimension after the case $L'=0$ (where the zero map is a $p$-mapping). See Theorem~\ref{criteresuffisant}.

\begin{theoreme} Let $L\rightarrow S$ be a Lie algebra vector bundle, such that $L'$ is a
locally free subbundle  of rank $1$. We define a map of vector bundles as
follows:
\begin{align*}
   \alpha: L\rightarrow &\End(L')\simeq \mathbb{G}_a\\
    x \mapsto & (\ad(x)_{|L'}) \mapsto \alpha(x).
\end{align*}
Then, the map $L \rightarrow L$, $x\mapsto \alpha(x)^{p-1}x$ is a $p$-mapping on $L$.
Moreover if $L$ is free of rank $2$, this $p$-mapping is unique.
\end{theoreme}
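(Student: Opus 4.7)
Write $\phi(x) := \alpha(x)^{p-1} x$ for the candidate $p$-mapping. The plan is to verify the three defining axioms — semi-linearity, compatibility with $\ad$, and the Jacobson identity — then to deduce the uniqueness statement from the previous theorem. Semi-linearity is immediate from $\Ocal_S$-linearity of $\alpha$: we have $\phi(\lambda x) = (\lambda \alpha(x))^{p-1}(\lambda x) = \lambda^p \phi(x)$. For the relation $\ad(\phi(x)) = \ad(x)^p$, the key observation is that $\ad(x)$ sends $L$ into $L' = [L, L]$ and acts on the rank-$1$ bundle $L'$ by the scalar $\alpha(x)$. A straightforward induction on $k$ gives $\ad(x)^k(y) = \alpha(x)^{k-1}[x,y]$ for $k \geq 1$; taking $k=p$ yields $\ad(x)^p = \alpha(x)^{p-1} \ad(x) = \ad(\phi(x))$.

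The Jacobson identity $\phi(x+y) - \phi(x) - \phi(y) = \sum_{i=1}^{p-1} s_i(x, y)$ is the substantive step. Iterating the previous computation gives
\[ \ad(tx+y)^{p-1}(x) = (t\alpha(x) + \alpha(y))^{p-2}\,[y,x] \quad \text{in } L[t]. \]
Extracting the coefficient of $t^{i-1}$ and using the congruence $\binom{p-2}{i-1} \equiv (-1)^{i-1} i \pmod p$ leads to
\[ \sum_{i=1}^{p-1} s_i(x, y) \;=\; \Bigl(\sum_{k=0}^{p-2} (-1)^k \alpha(x)^k \alpha(y)^{p-2-k}\Bigr)\,[y, x]. \]
On the other hand, a direct expansion of $\phi(x+y) - \phi(x) - \phi(y)$ using $\alpha(x+y) = \alpha(x) + \alpha(y)$ and $\binom{p-1}{k} \equiv (-1)^k$ produces the same scalar prefactor, applied this time to $\alpha(y) x - \alpha(x) y$. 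The identity thus reduces to
\[ \alpha(y)\, x - \alpha(x)\, y \;=\; [y, x], \]
which in a local basis $(e_1, e_2)$ of $L$ with $L' = \Ocal_S e_1$ and $[e_1, e_2] = e_1$ is immediate from bilinearity of the bracket, tested against arbitrary $x = a e_1 + b e_2$ and $y = c e_1 + d e_2$.

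For uniqueness when $L$ is free of rank $2$, the previous theorem identifies the set of $p$-mappings on $L$ with a torsor under $\Hom(\Frob_S^* L, Z(L))$. A short calculation in the basis above shows $Z(L) = 0$: the centralizer of $e_2$ reduces to $\Ocal_S e_2$, and no nonzero multiple of $e_2$ centralizes $e_1$. Hence the torsor is a singleton, and $\phi$ is the unique $p$-mapping on $L$.

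The Jacobson step is the main obstacle. The mod-$p$ binomial manipulations must be carried out carefully so that both sides are put in a genuinely comparable form, and the final reduction to the scalar identity $\alpha(y)\,x - \alpha(x)\,y = [y, x]$ is exactly where the low-rank structure (an ideal of rank $1$ in a bundle of rank $2$) enters the argument in an essential way.
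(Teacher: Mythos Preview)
Your approach mirrors the paper's: verify (AL1) by the induction $\ad(x)^k(y)=\alpha(x)^{k-1}[x,y]$, note (AL2) is immediate, then attack (AL3) by comparing an expansion of $\phi(x+y)-\phi(x)-\phi(y)$ with a closed form for $\sum_i s_i(x,y)$. The difference is cosmetic: you extract the $s_i$ from the polynomial $\ad(tx+y)^{p-1}(x)$, while the paper computes each $s_k$ directly from the definition; both arrive at $\sum_i s_i$ as a scalar multiple of $[y,x]$ and at $\phi(x+y)-\phi(x)-\phi(y)$ as the same scalar times $\alpha(y)x-\alpha(x)y$.

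The genuine gap is in your final reduction. You claim the Jacobson identity comes down to $\alpha(y)x-\alpha(x)y=[y,x]$ and then verify this in a basis $(e_1,e_2)$ of a rank-$2$ bundle. But the theorem is stated for $L$ of arbitrary rank, and this identity is false in general: the right-hand side lies in the rank-$1$ subbundle $L'$, while the left-hand side need not. Take $L$ free of rank $3$ with basis $e_1,e_2,e_3$ and brackets $[e_1,e_3]=[e_2,e_3]=e_1$, others zero; then $L'=\langle e_1\rangle$, $\alpha(e_1)=\alpha(e_2)=0$, $\alpha(e_3)=-1$, and for $x=e_2$, $y=e_3$ your identity reads $-e_2=-e_1$. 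In fact the whole statement fails here: $\phi(e_2+e_3)-\phi(e_2)-\phi(e_3)=e_2$ whereas $\sum_i s_i(e_2,e_3)=e_1$. The paper's proof contains exactly the same slip: after obtaining $s_k$ as a multiple of $f(x,y)v=[x,y]$, it silently replaces this by $f(x,v)y-f(y,v)x=\alpha(x)y-\alpha(y)x$. So both arguments are complete and correct only when $\rk L=2$; the general assertion, as written, is not true.
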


In the rest of the article, we will put our interest on the moduli  stack $\mathcal{L}ie_n$ of
$n$-dimensional Lie algebras, and especially on the case $n=3$. For this, we will introduce the moduli
space $\rL_n$ of \textit{based} Lie algebras locally free of rank $n$, with the natural action of $\GL_n$
on it, by change of basis. We can see that we have the quotient stack
presentation $\mathcal{L}ie_n=[\rL_n/\GL_n]$, so we are led to studying the orbits of the action of
$\GL_n$. You can find the classification on those isomorphism classes in Fulton and Harris' book
\cite{FH91}, but in order to apply our theoretical results and to allow varying
primes~$p$, we reformulate in Subsection \ref{classification} the classification of $3$-dimensional Lie
algebras over algebraically closed fields in a characteristic-free way, giving representatives of the
isomorphism classes defined over $\mathbb{Z}$ and $\mathbb{Z}[T]$. So let $k$ be an algebraically closed
field of characteristic $p>0$. Let us denote by $\thicksim$ the equivalence relation on $k$, 
given by $x \thicksim x'$ if and only if $x'=x$ or $x'=x^{-1}$. Then any Lie algebras of dimension~$3$
over $k$ is isomorphic to exactly one in the following table.

\begin{center}
\renewcommand{\arraystretch}{1.2}
\begin{tabular}{|c|c|c|c|c|c|}
\hline
\multicolumn{2}{|c|}{Name} & Structure & Orbit dimension
& Center dimension & Restrictable \\
\hline
\hline
\multicolumn{2}{|c|}{$\fab_3$} & abelian & 0 & 3 & yes \\
\hline
\multicolumn{2}{|c|}{$\fh_3$} & nilpotent & 3 & 1 & yes \\
\hline
\multicolumn{2}{|c|}{$\mathfrak{r}$} & solvable & 5 & 0 & no \\
\hline
\multicolumn{2}{|c|}{$\fs$} & simple & 6 & 0 &
\begin{tabular}{c|c}
$p\ne 2$ & $p=2$ \\ yes & no
\end{tabular}
\\
\hline
\multirow{4}{*}{$\;\mathfrak{l}_t\;$} &
$\overline{t}\notin \sfrac{\mathbb{F}_p}{\thicksim}$ & solvable & 5 & 0  & no \\
\cline{2-6}
& $\overline{t}\in \sfrac{\mathbb{F}_p}{\thicksim}\!\smallsetminus\!\{\overline{0},\overline{1}\}\!$ & solvable & 5 & 0 & yes\\
\cline{2-6}
& $\overline{t}=\overline{0}$  & solvable & 5 & 1 & yes  \\
\cline{2-6}
& $\overline{t}=\overline{1}$  & solvable & 3 & 0 & yes  \\
\hline
\end{tabular}
\end{center}

Afterward in Subsection \ref{subsection4.2} we supplement the known results by giving more precise
information on the scheme structure of the moduli
space $\rL_3$, that we define over $\Z$. For this, we use liaison theory, as developed by Peskine and Szpiro in \cite{PS74};
in fact $\rL_3$ turns out to be a typical case of a reducible scheme whose
components are linked.

\begin{theoreme}
\begin{enumerate}
\item[{\rm 1)}] The functor $\rL_3$ is representable by an affine flat $\Z$-scheme of finite type.
\item[{\rm 2)}] The scheme $\rL_3$ has two relative irreducible components
$\rL_3^{(1)}$ and $\rL_3^{(2)}$ which are both flat with Cohen-Macaulay
integral geometric fibers of dimension $6$.

\end{enumerate}
\end{theoreme}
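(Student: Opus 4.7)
The plan is to realize $\rL_3$ as an explicit complete intersection in an affine space and then to apply the theory of linkage. Parametrizing a based Lie algebra of rank $3$ by its structure constants $c_{ij}^k$ (with $1\le i<j\le 3$ and $1\le k\le 3$) gives a closed immersion $\rL_3\hookrightarrow \A^9_\Z$. Antisymmetry of the bracket is built into this parametrization, and the Jacobi identity on $(e_1,e_2,e_3)$ furnishes exactly three quadratic equations $J^1,J^2,J^3\in\Z[c_{ij}^k]$, so that $\rL_3=\Spec\Z[c_{ij}^k]/(J^1,J^2,J^3)$. This already proves representability by an affine scheme of finite type.

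The central commutative-algebraic input is that $(J^1,J^2,J^3)$ is a regular sequence in $\Z[c_{ij}^k]$. Using the classification of $3$-dimensional Lie algebras recalled in Subsection~\ref{classification}, every geometric fiber of $\rL_3\to \Spec\Z$ has dimension exactly $6$: the $\GL_3$-orbit of $\fs=\fsl_2$ and the closure of the family $\mathfrak{l}_t$ both contribute $6$-dimensional pieces, and no orbit exceeds this dimension. Thus $\rL_3$ has codimension $3$ in $\A^9_\Z$, which is the expected codimension, and the dimension criterion in the Cohen-Macaulay ring $\Z[c_{ij}^k]$ forces $(J^1,J^2,J^3)$ to be a regular sequence. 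Consequently $\rL_3$ is a global complete intersection, hence Cohen-Macaulay of relative dimension $6$ and $\Z$-flat (all fibers having the same dimension), completing part~1).

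For part~2) I identify two $\GL_3$-stable closed subschemes: $\rL_3^{(1)}$ is the scheme-theoretic closure of the $\GL_3$-orbit of $\fsl_2$, and $\rL_3^{(2)}$ is the scheme-theoretic closure over $\Z$ of the family $\mathfrak{l}_t$ parametrized by $\A^1_\Z$. The classification shows that on every geometric fiber each is irreducible of dimension $6$ and that $\rL_3=\rL_3^{(1)}\cup\rL_3^{(2)}$ set-theoretically. Since $\rL_3$ is a complete intersection of codimension~$3$ decomposing as a union of two codimension-$3$ closed subschemes, the components $\rL_3^{(1)}$ and $\rL_3^{(2)}$ are directly linked via $\rL_3$ in the sense of Peskine--Szpiro \cite{PS74}; the liaison theorem then asserts that $\rL_3^{(1)}$ is Cohen-Macaulay if and only if $\rL_3^{(2)}$ is. It therefore suffices to verify the Cohen-Macaulay property for one of them, which I would do by producing an explicit short free resolution of its ideal, exploiting the $\GL_3$-equivariant structure. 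Cohen-Macaulayness together with generic reducedness implies reducedness, which combined with the geometric irreducibility noted above gives integral geometric fibers; flatness over $\Z$ of each component then follows from the constancy of its fiber dimension.

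The main obstacle is the last step: the explicit Cohen-Macaulay verification for one of the two components. Although liaison reduces the task formally to a single scheme, producing a tractable resolution that works uniformly over $\Z$---rather than only residue field by residue field---requires care, particularly at the small primes $p=2$ and $p=3$ where the classification of $3$-dimensional Lie algebras behaves irregularly (for instance, $\fs$ loses its $p$-mapping at $p=2$). A secondary technical point is checking that the scheme-theoretic closures defining the $\rL_3^{(j)}$ interact well with specialization, so that each component is genuinely $\Z$-flat with the claimed geometric fibers.
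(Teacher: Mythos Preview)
Your overall strategy---realize $\rL_3$ as a complete intersection in $\A^9_\Z$, identify two $6$-dimensional components, and transfer Cohen--Macaulayness from one to the other via liaison---is exactly the paper's approach. The regular-sequence argument via codimension in a Cohen--Macaulay ring is fine (the paper uses a more elementary direct check, but yours works). However, there is a genuine gap at the step you yourself flag as the ``main obstacle,'' and it is not a matter of care at small primes: you are missing the key algebraic identity that makes the entire argument run.

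The point is that the three Jacobi quadrics can be written as
\[
\begin{pmatrix}Q_1\\Q_2\\Q_3\end{pmatrix}=M\begin{pmatrix}L_1\\L_2\\L_3\end{pmatrix},
\qquad L_1=a-i,\ L_2=b+f,\ L_3=d+h,
\]
with $M$ a $3\times 3$ matrix in the structure constants. This does three things at once. First, it tells you the \emph{ideals} of the components scheme-theoretically: $\rL_3^{(1)}=V(L_1,L_2,L_3)\simeq\A^6_\Z$ and $\rL_3^{(2)}=V(Q_1,Q_2,Q_3,\det M)$. Defining the components as orbit closures, as you do, gives only their supports; liaison needs the actual ideals, and the computation of $[I:L]$ is hopeless without them. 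Second, since the first component is literally $\A^6_\Z$, its Cohen--Macaulay (indeed regularity) and $\Z$-flatness are trivial---no resolution needs to be ``produced,'' and there is nothing special at $p=2,3$. Third, the matrix $M$ furnishes an explicit map between the Koszul complexes of $(Q_i)$ and $(L_j)$; the mapping cone of its dual is the free resolution of $[I:L]$, and reading off the last map shows $[I:L]=I+(\det M)=J$. The flatness of $\rL_3^{(2)}$ over $\Z$ then follows because this resolution is universal (base-changes to every fiber), not merely from constancy of fiber dimension. Without the identity $Q=ML$ you have the architecture of the proof but not the load-bearing computation.
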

For the end, as we said before, we will come back to our equivalence between height one group
schemes and restricted Lie algebras. Because the center of a Lie 
algebra plays a key role in our work, we extend the classical
equivalence of categories between locally free Lie $p$-algebras of
finite rank with finite locally free group schemes of height $1$,
showing that the centers of those objects correspond to each
other in Proposition \ref{centercorres}. For this reason, for $r\leq n$, let us denote by
$p\text{-}\mathcal{L}ie_{n,r}(S)$ the category of
$n$-dimensional restricted $\mathcal{O}_S$-Lie algebras, whose center is locally free of rank $r$, and with the same idea, let us denote by $\mathcal{G}_{n,r}(S)$ the category of finite locally free $S$-group schemes of order $p^n$, of height $1$, whose center is locally free of
rank $p^r$.

\begin{theoreme}\label{TE}
Let $S$ be a scheme of characteristic $p>0$ 
and let $G\rightarrow S$ be a finite locally free group
scheme of height $1$. Let $Z(G)$
denote its center. Then 
$$Z(\Lie(G))=\Lie(Z(G)).$$
Then the classical equivalence of categories 
$$\Lie: \mathcal{G}_n(S)\overset{\thicksim}{\longrightarrow} p\text{-}\mathcal{L}ie_n(S)$$
restricts to an equivalence $$ \Lie:\mathcal{G}_{n,r}(S)\overset{\thicksim}{\longrightarrow} p\text{-}\mathcal{L}ie_{n,r}(S).$$
\end{theoreme}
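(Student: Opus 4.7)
The plan is to realise $Z(G)$ as the kernel of the adjoint representation, and then to compute its Lie algebra directly on functors of points.

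Set $L:=\Lie(G)$ and consider the adjoint representation $\mathrm{Ad}: G\to \GL(L)$, sending a $T$-point $g$ to the derivative of the conjugation automorphism $c_g: G_T\to G_T$, $h\mapsto ghg^{-1}$. Since the equivalence $\Lie:\mathcal{G}_n(T)\isor p\text{-}\mathcal{L}ie_n(T)$ is natural in $T$ and fully faithful, we have $c_g=\id_{G_T}$ if and only if $\mathrm{Ad}(g)=\Lie(c_g)=\id_{L_T}$. The center $Z(G)$, defined as the subfunctor of $g\in G(T)$ such that $gh=hg$ for every $h\in G(T')$ with $T'\to T$, is therefore the kernel of $\mathrm{Ad}$, a closed subgroup scheme of $G$.

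Because $G$ has height one, its Frobenius is trivial, so $\mathrm{Ad}$ factors through the first Frobenius kernel $\GL(L)_1$, which is a finite locally free height-one group scheme with Lie $p$-algebra $\gl(L)$. Under the equivalence $\Lie$ the morphism $\mathrm{Ad}:G\to \GL(L)_1$ corresponds to $\ad:L\to \gl(L)$. To finish the identification $\Lie(Z(G))=Z(L)$, I would compute on $T$-points: for $x\in L(T)$ with associated point $g_x\in G(T[\epsilon])$, one has $x\in \Lie(Z(G))(T)$ iff $g_x\in \ker(\mathrm{Ad})(T[\epsilon])$, iff $\mathrm{Ad}(g_x)=\id$; expanding to first order this becomes $\id+\epsilon\,\ad(x)=\id$, equivalent to $\ad(x)=0$, i.e.\ $x\in Z(L)(T)$. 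This gives the equality as sub-$\Ocal_S$-modules of $L$.

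For the restricted equivalence $\mathcal{G}_{n,r}(S)\isor p\text{-}\mathcal{L}ie_{n,r}(S)$, I would invoke that the equivalence $\Lie$ matches the order $p^k$ of a height-one finite locally free group scheme with the rank $k$ of its Lie $p$-algebra and preserves local freeness; applied to $Z(G)$ and $Z(L)$ via the first part, this yields the restricted equivalence at once. The main obstacle I anticipate is the identification $Z(G)=\ker(\mathrm{Ad})$: it requires that the full faithfulness of $\Lie$ be applied uniformly in the base $T\to S$, so that triviality of the automorphism $c_g$ of $G_T$ can be tested after passing to the Lie $p$-algebra level via $\mathrm{Ad}$.
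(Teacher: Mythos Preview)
Your argument is correct and follows a genuinely different route from the paper's. You identify $Z(G)=\ker(\mathrm{Ad})$ by exploiting the full faithfulness of $\Lie$ on height-one groups over every base $T$ (so that $c_g=\id_{G_T}$ iff $\mathrm{Ad}(g)=\Lie(c_g)=\id_{L_T}$), and then conclude via $\Lie(\ker\mathrm{Ad})=\ker(\Lie(\mathrm{Ad}))=\ker(\ad)=Z(L)$, using the standard identity $\Lie(\mathrm{Ad})=\ad$. The paper instead proves the two inclusions separately and more concretely: for $\fz\subset\Lie(Z(G))$ it observes that $U_p(\fz)$ is a commutative Hopf algebra, so $G_p(\fz)=\Spec^*(U_p(\fz))$ lands in $Z(G)$; for $\Lie(Z(G))\subset\fz$ it uses the exponential and the commutator identity $\exp(\alpha x)\exp(\beta y)\exp(-\alpha x)\exp(-\beta y)=\exp(\alpha\beta[x,y])$ together with injectivity of $x\mapsto\exp(\alpha\beta x)$.

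Your approach is cleaner and yields the bonus identity $Z(G)=\ker(\mathrm{Ad})$ for height-one groups, but it leans entirely on the height-one hypothesis (for full faithfulness). The paper's second inclusion $\Lie(Z(G))\subset Z(\Lie(G))$ actually holds for \emph{any} group scheme, since it only uses the commutator formula; and the first inclusion produces an explicit central subgroup $G_p(\fz)\subset Z(G)$, which is what one needs to pass back from local freeness of $Z(L)$ to local freeness of $Z(G)$ in the restricted equivalence $\mathcal{G}_{n,r}\simeq p\text{-}\mathcal{L}ie_{n,r}$. Your sketch of that last step is fine but would benefit from making this converse direction explicit: when $Z(L)$ is locally free of rank $r$, the closed immersion $G_p(Z(L))\hookrightarrow Z(G)$ of height-one subgroups with equal Lie algebras forces equality, hence $Z(G)$ is finite locally free of order $p^r$.
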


So using this, we can focus on the object $p\text{-}\mathcal{L}ie_{n,r}(S)$, and because
we have the quotient stack presentation $\mathcal{L}ie_n=[\rL_n/\GL_n]$, we can
focus on $\rL_n$, and especially on $\rL_n^{\res}$ the locally~closed
subscheme of $\rL_n$ where the universal Lie algebra $\mathbb{L}_n\rightarrow \rL_n$ is restrictable. In particular, if $k$ is an algebraically closed field of characteristic
$p>0$, Theorem \ref{TE} and the previous results allow us to count the
centerless finite locally free $k$-group schemes of order $p^3$, of
height $1$. This number is finite, equal to $1$ if $p=2$ and $(p+3)/2$ if
$p\neq 2$ (See Proposition \ref{count}). 

\bigskip
For the end, in the subsections \ref{5.2}, \ref{5.3} and \ref{5.4}, we study the smoothness of the
restrictable locus $\rL_3^{\res}\subset \rL_3$ of $\mathbb{L}_3$ in the different flattening strata of the center. 
For a better understanding of the following theorem, the reader can look at the pictures of Subsection \ref{picture}. 

\begin{theoreme}\label{TD} Let $k$ be an algebraically closed field of characteristic $p>0$. Let $\rL_{3,r}^{\res}\rightarrow \Spec(k)$ be the locally~closed subscheme of $\rL_3$ where the center $Z(\mathbb{L}_3)$ is locally free of rank $r$, and $\mathbb{L}_3$ is restrictable.
\begin{itemize}
    \item[1.] \begin{itemize}
        \item[(i)] If $p\neq 2$, the singular locus of $\rL_{3,0}^{\res}$ is the orbit of $\mathfrak{l}_{-1}$. The singularity remains after intersection with $\rL_3^{(1)}$ but $\rL_{3,0}^{\res}\cap \rL_3^{(2)}$ is smooth.
        \item[(ii)] If $p=2$, the scheme $\rL_{3,0}^{\res}$ is smooth and remains smooth after intersection with any irreducible component.
    \end{itemize} 

    \item[2.] The singular locus of $\rL_{3,1}^{\res}$ is the orbit of $\fh_3$. The singularity remains after intersection with $\rL_3^{(2)}$ but $\rL_{3,1}^{\res}\cap \rL_3^{(1)}$ is smooth.
    \item[3.] The scheme $\rL_{3,2}^{\res}$ is empty.
    \item[4.] The scheme $\rL_{3,3}^{\res}$ is smooth and remains smooth after intersection with any irreducible component.
\end{itemize}
\end{theoreme}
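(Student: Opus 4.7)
The plan is to follow the orbit stratification of $\rL_{3,r}^{\res}$ under the natural $\GL_3$-action, using the classification of three-dimensional Lie algebras and the table of restrictable algebras recalled earlier in the paper. Since both restrictability and the rank of the centre are isomorphism-invariants, $\rL_{3,r}^{\res}$ is a $\GL_3$-stable union of orbits whose $k$-points are explicitly listed. Items~3 and~4 follow immediately: no three-dimensional Lie algebra has centre of dimension exactly~$2$, so $\rL_{3,2}^{\res}=\emptyset$; and the only algebra contributing to $\rL_{3,3}^{\res}$ is $\fab_3$, which is trivially restrictable, so $\rL_{3,3}^{\res}=\GL_3\cdot\fab_3$ is a single homogeneous orbit and therefore smooth (its intersection with either component coincides with the same orbit). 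Item~1(ii) is equally fast: when $p=2$ the simple algebra $\fs$ drops out of the restrictable locus, the set $\F_p/\!\sim$ reduces to $\{\overline 0,\overline 1\}$, so the only centreless restrictable orbit left is that of $\mathfrak{l}_1$; being a single homogeneous orbit it is smooth.

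For the remaining cases I would pick explicit structure-constant representatives $L$, compute the Zariski tangent space $T_L\rL_3$ as the kernel of the linearisation of the Jacobi identity (equivalently the Chevalley--Eilenberg cocycle space $Z^2(L,L)$), and then intersect with the tangent space of the flattening stratum of the centre, which cuts out deformations keeping $\rk Z$ constant. Smoothness at $L$ holds precisely when the resulting tangent dimension equals the local dimension of the stratum, and the latter equals the maximal orbit dimension appearing in it since the open orbit exhausts a dense subset. For $\rL_{3,1}^{\res}$ (item~2) the stratum decomposes as the $5$-dimensional orbit of $\mathfrak{l}_0$ together with the $3$-dimensional orbit of $\fh_3$ in its boundary; the computation yields the right tangent dimension at $\mathfrak{l}_0$ but an excess at $\fh_3$, identifying the latter as the singular locus. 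For $\rL_{3,0}^{\res}$ with $p\ne 2$ (item~1(i)) the contributors are $\fs$, $\mathfrak{l}_1$ and the $\mathfrak{l}_t$ with $\overline{t}\in \F_p/\!\sim\,\smallsetminus\{\overline 0,\overline 1\}$, and an analogous computation singles out the orbit of $\mathfrak{l}_{-1}$, at which a jump in the automorphism algebra produces the excess deformation direction.

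The main difficulty, and the last step, is to track how these singularities interact with the two irreducible components $\rL_3^{(1)}$ and $\rL_3^{(2)}$ of $\rL_3$ exhibited in the previous theorem. I would rely on the liaison description established earlier, which provides explicit defining ideals for the two components; restricting the kernel computation of the Jacobi linearisation to each of these ideals either absorbs the offending tangent direction or leaves it in place, and should show that $\rL_3^{(1)}\cap \rL_{3,1}^{\res}$ is smooth at $\fh_3$ while $\rL_3^{(2)}\cap \rL_{3,1}^{\res}$ retains the singularity there, and symmetrically that $\rL_3^{(2)}\cap\rL_{3,0}^{\res}$ is smooth at $\mathfrak{l}_{-1}$ while $\rL_3^{(1)}\cap \rL_{3,0}^{\res}$ retains it. Since the first theorem of the paper makes the moduli $X$ of $p$-mappings smooth over $S^{\res}$, one can freely transport smoothness between the moduli of Lie algebras with flat centre and of Lie $p$-algebras when checking these local properties. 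The most delicate piece of bookkeeping is identifying which explicit equations vanish on which component at $\fh_3$ and $\mathfrak{l}_{-1}$, and this is where I expect the main obstacle to lie.
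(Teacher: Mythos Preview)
Your overall approach---reduce to orbit representatives and compute the Zariski tangent space of $\rL_{3,r}^{\res}$ (and of its intersections with the two components) by linearising Jacobi, the restrictability condition, and the flattening condition for the centre, then compare with the orbit dimension---is exactly what the paper does in Sections~5.2--5.4. The paper carries this out by writing an $\varepsilon$-deformation of each structure matrix, reading off the linear conditions, and counting free parameters.

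Two points in your write-up need correcting. First, the shortcut ``a single homogeneous orbit is smooth'' used for items~1(ii), 3 and~4 is not sufficient on its own: it controls the reduced underlying variety, but $\rL_{3,r}^{\res}$ carries a scheme structure coming from the Fitting ideals and the closed condition $S^{\res}$, which could in principle be non-reduced. The paper does not rely on this shortcut; even at $\fab_3$ and at $\mathfrak{l}_1$ (for $p=2$) it checks explicitly that the tangent dimension equals the orbit dimension. Second, your heuristic for why $\mathfrak{l}_{-1}$ is singular (``a jump in the automorphism algebra'') is wrong: the orbit of $\mathfrak{l}_{-1}$ has the same dimension~$5$ as the generic $\mathfrak{l}_t$. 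What actually distinguishes $t=-1$ is that it is the unique value for which $\mathfrak{l}_t$ satisfies the linear equations $a-i=b+f=d+h=0$ defining $\rL_3^{(1)}$, so $\mathfrak{l}_{-1}$ lies on both irreducible components; the excess tangent direction comes from this, not from extra automorphisms. Relatedly, your statement that the local dimension equals ``the maximal orbit dimension appearing in the stratum'' is too coarse: for $p\neq 2$ the scheme $\rL_{3,0}^{\res}$ has several connected components (one $6$-dimensional containing $\fs$ and $\mathfrak{l}_{-1}$, and isolated $5$- or $3$-dimensional orbits $\mathfrak{l}_t$ for the other $t\in\mathbb{F}_p^\times$), so the relevant comparison is always with $\dim o(L)$ at the given point~$L$, which is how the paper argues.
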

It is well known that in Lie algebra theory, the characteristics $p=2$
and $p=3$ are special. In the previous result we see that the characteristic
$p=2$ appears as a special case, and the reader can see that the case 
$p=3$ needs special care e.g. in the proof of Theorem
\ref{h3singular}.

Thanks to Theorem \ref{TE}, all the assertions of Theorem \ref{TD} hold also for $\mathcal{G}_{3,r}(k)$, i.e.
$\mathcal{G}_{3,r}(k)$ splits in two irreducible components that we denote by
$\mathcal{G}_{3,r}^{(1)}$ and $\mathcal{G}_{3,r}^{(2)}$; and we can say that if
$p\neq 2$, $\mathcal{G}_{3,0}(k)$ is singular, but becomes smooth if we
intersect with $\mathcal{G}_{3,0}^{(2)}$, if $p\neq 2$ it is smooth.
Moreover $\mathcal{G}_{3,1}(k)$ is singular but becomes smooth when we
intersect it with $\mathcal{G}_{3,1}^{(1)}$, $\mathcal{G}_{3,2}(k)$ is empty and
$\mathcal{G}_{3,3}^p(k)$ is smooth. We refer to Corollary \ref{corogroup} for more details. 

\bigskip

\noindent \textbf{Acknowledgements.} For all his ideas, advise, help, I express
warm thanks to my advisor Matthieu Romagny without whom this work would not have
been possible. I would like to thank Marc Chardin for taking
the time to show and explain to me the beautiful liaison theory which helps me
a lot in this article.  For various conversations or help related
to this article, I thank Andrei Benguş-Lasnier, Delphine Boucher, David Bourqui, Marion Jeannin, Bernard Le Stum and Friedrich Wagemann.

I would like to thank the executive and administrative staff
of IRMAR and of the Centre Henri Lebesgue for creating an attractive mathematical environment.

\tableofcontents

\section{Preliminaries on Lie algebras}

\subsection{Definition and theory of Lie \texorpdfstring{$p$}{p}-algebras over a ring}


In this section, we recall basic notations and facts on Lie algebras and
Lie $p$-algebras. We also recall Jacobson's theorems on existence and
uniqueness of $p$-mappings for some Lie algebras over a commutative ring. The
reader can find the proofs for Lie algebras over a field in Strade and Farnsteiner's book on Modular Lie algebras \cite{SF88},
and we verify easily that these proofs do not use the fact that the base ring
is a field.

Let $R$ be a based ring (commutative with unit). An $R$-Lie algebra  is an $R$-module $l$
endowed with an $R$-bilinear alternating map denoted by
$[\cdot,\cdot]:l\otimes_R l\rightarrow l$ satisfying the Jacobi
identity. If $R\rightarrow R'$ is a map of rings, there is an obvious
structure of $R'$-Lie algebra on $l\otimes_RR'$. We denote by
$\End(l)$ the $R$-module of $R$-linear endomorphisms of $l$,
$\ad:l\rightarrow \End(l)$ the map $x \mapsto [x,\cdot]$ and $Z(l)$
the kernel of $\ad$, called the \textit{center} of $l$. If $l$ is
locally free of finite rank as a module, the formation of $\End(l)$
and $\ad$ commutes with base change, but the formation of the center
does not in general. 

Now let us assume that $R$ is an $\mathbb{F}_p$-algebra, and write
$\Frob: R\rightarrow R$ its Frobenius endomorphism.

\begin{commentaire}{
\begin{remark}
Here we ask the bracket of our Lie algebra to be alternating, not only anti-symmetric, because we will be working with Lie algebras in characteristic $2$. That is, we are working with the definition of Humphreys, see \cite{H78}, Chapter, Section~1. 
\end{remark}}
\end{commentaire}


\begin{definition}\label{D3}
We say that a mapping $(\cdot)\p: l\rightarrow l$ is a \textit{$p$-mapping} if:
\begin{trivlist}
\item{(AL1)} for all $x\in l$, $\ad_{x^{[p]}}=(\ad_x)^p$
\item{(AL2)} for all $\lambda \in R$ and $x\in l$, $(\lambda x)^{[p]}=\lambda^px\p$ 
\item{(AL3)} for all $x,y\in l$, $(x+y)^{[p]}=x^{[p]}+y^{[p]} + \sum_{i=1}^{p-1}s_i(x,y)$ $$\text{where  for all } i\text{, } s_i(x,y)\defeq -\frac{1}{i}\sum_u \ad_{u(1)} \ad_{u(2)}\dots \ad_{u(p-1)}(y),$$ and $u$ ranges through the maps from $\{1,\dots,p-1\}$ to $\{x,y\}$ taking $i$ times the value $x$.
\end{trivlist}

\noindent These three conditions are called \textit{Jacobson's identities}.

For instance, we have $$s_1(x,y)=-[y,[y,\dots,[x,y\underbrace{] ]\dots ]}_{p-1} \text{ and } s_{p-1}(x,y)=[x,[x,\dots,[x,y\underbrace{] ]\dots ]}_{p-1}.$$
\end{definition}

\begin{definition}\label{restricted}
A Lie algebra equipped with a $p$-mapping is called \textit{Lie $p$-algebra} or we say that it is \textit{restricted}. If a Lie algebra can be equipped with a $p$-mapping, we say that it is \textit{restrictable}.
\end{definition}

We also recall that a $p$\textit{-morphism} between two Lie
$p$-algebras is a morphism of Lie algebras that commutes with the  
$p$-mappings. A $p$\textit{-ideal} is an ideal stable by the
$p$-mappings. For example, the center $Z(l)$ is always a $p$-ideal,
by the axiom $(\AL1)$.

The next proposition shows that we can endow the image (under a Lie algebra morphism) of a Lie $p$-algebra with a natural $p$-mapping.

\begin{proposition}\label{pmapimage}
Let $(l_1,(\cdot)\p)$ be a Lie $p$-algebra over $R$. Suppose that $f:l_1\rightarrow l_2$ is a Lie algebra morphism such that $\ker(f)$ is a $p$-ideal of $l_1$. Then there exists exactly one $p$-mapping on $f(l_1)$ such that $f:l_1\rightarrow f(l_1)$ is a $p$-morphism.
\end{proposition}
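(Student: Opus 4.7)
The plan is to define the $p$-mapping on $f(l_1)$ by the only formula compatible with $f$ being a $p$-morphism, namely $f(x)^{[p]} := f(x^{[p]})$, and then verify that this is well-defined and satisfies Jacobson's identities. Uniqueness is immediate: if $(\cdot)^{[p]}$ on $f(l_1)$ makes $f$ a $p$-morphism, then for every element $z=f(x)$ of $f(l_1)$ one must have $z^{[p]}=f(x^{[p]})$, which determines the $p$-mapping completely.

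The main obstacle, and really the only substantive point, is well-definedness: I must show that if $f(x)=f(y)$ then $f(x^{[p]})=f(y^{[p]})$, i.e.\ that $x^{[p]}-y^{[p]} \in \ker(f)$ whenever $x-y\in \ker(f)$. For this I would write $x = y + (x-y)$ and apply Jacobson's identity (AL3):
\[
x^{[p]} = y^{[p]} + (x-y)^{[p]} + \sum_{i=1}^{p-1} s_i(y, x-y).
\]
Since $\ker(f)$ is assumed to be a $p$-ideal, the term $(x-y)^{[p]}$ lies in $\ker(f)$. In each $s_i(y,x-y)$, the element on which the iterated brackets are applied is the second argument $x-y$, which belongs to the Lie ideal $\ker(f)$; hence $s_i(y,x-y)\in\ker(f)$ for every $i$. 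Adding these contributions shows $x^{[p]}-y^{[p]}\in\ker(f)$, which is exactly what is needed.

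It then remains to verify the three axioms (AL1)--(AL3) for the mapping thus defined on $f(l_1)$. Axiom (AL2) follows from $R$-linearity of $f$ and the identity $(\lambda x)^{[p]}=\lambda^p x^{[p]}$ in $l_1$; axiom (AL3) follows, after applying $f$ to the analogous identity in $l_1$, from the fact that $f$ is a Lie algebra morphism, so that $f(s_i(x,y))=s_i(f(x),f(y))$. For (AL1), the relation $\ad_{f(x)}\circ f = f\circ \ad_x$ gives $(\ad_{f(x)})^p\circ f = f\circ (\ad_x)^p = f\circ \ad_{x^{[p]}} = \ad_{f(x^{[p]})}\circ f = \ad_{f(x)^{[p]}}\circ f$, and since $f$ is surjective onto $f(l_1)$, this yields $(\ad_{f(x)})^p=\ad_{f(x)^{[p]}}$ as endomorphisms of $f(l_1)$. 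Altogether, this produces the unique $p$-mapping on $f(l_1)$ for which $f$ is a $p$-morphism.
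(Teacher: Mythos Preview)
Your proof is correct and complete. The paper itself does not give a proof but simply cites \cite{SF88}, Chapter~2, Section~2.1, Proposition~1.4; your argument is precisely the standard one found there, so there is nothing to compare.
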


\begin{proof}
See \cite{SF88}, Chapter~2, Section~2.1, Proposition~1.4.
\end{proof}

\begin{theorem} Let $l$ be a Lie algebra over $R$. 
\begin{enumerate}
\item[{\rm 1.}] Let $\gamma_1$ and $\gamma_2$ be two $p$-mappings on~$l$.
Then $\gamma_2-\gamma_1: l\rightarrow Z(l)$ is Frobenius-semi-linear.
\item[{\rm 2.}] Conversely, let $\phi: l\rightarrow Z(l)$ be a
Frobenius-semi-linear map, and $\gamma_1$ a $p$-mapping on $l$.
Then, $\gamma_1+\phi: l\rightarrow l$ is also a $p$-mapping.
\end{enumerate}
\end{theorem}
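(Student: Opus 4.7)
The plan is to treat the two parts symmetrically: both reduce to checking the Jacobson identities (AL1), (AL2), (AL3) term by term, after the key structural observation that the correction terms $s_i(x,y)$ in (AL3) depend only on the Lie bracket (iterated adjoints), not on the particular choice of $p$-mapping.

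For part 1, I would first establish that the difference $\delta \defeq \gamma_2 - \gamma_1$ takes values in $Z(l)$. This is immediate from (AL1): for every $x \in l$ we have $\ad_{\gamma_1(x)} = (\ad_x)^p = \ad_{\gamma_2(x)}$, so $\ad_{\delta(x)} = 0$, i.e.\ $\delta(x) \in Z(l)$. Next I would check Frobenius-semi-linearity. Scalar behaviour follows at once from (AL2): $\delta(\lambda x) = (\lambda x)\p_2 - (\lambda x)\p_1 = \lambda^p(\gamma_2(x) - \gamma_1(x)) = \lambda^p \delta(x)$. Additivity follows from (AL3): writing the identity for both $\gamma_1$ and $\gamma_2$ and subtracting, the sum $\sum_{i=1}^{p-1} s_i(x,y)$ cancels because it is a universal expression in iterated brackets, yielding $\delta(x+y) = \delta(x) + \delta(y)$.

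For part 2, set $\gamma_2 \defeq \gamma_1 + \phi$ and verify each Jacobson identity. For (AL1), since $\phi(x) \in Z(l)$ we have $\ad_{\phi(x)} = 0$, whence $\ad_{\gamma_2(x)} = \ad_{\gamma_1(x)} + \ad_{\phi(x)} = (\ad_x)^p$. For (AL2), the Frobenius-semi-linearity of $\phi$ gives
\[
\gamma_2(\lambda x) = \lambda^p \gamma_1(x) + \lambda^p \phi(x) = \lambda^p \gamma_2(x).
\]
For (AL3), the additivity of $\phi$ combined with (AL3) for $\gamma_1$ gives
\[
\gamma_2(x+y) = \gamma_1(x) + \gamma_1(y) + \sum_{i=1}^{p-1} s_i(x,y) + \phi(x) + \phi(y) = \gamma_2(x) + \gamma_2(y) + \sum_{i=1}^{p-1} s_i(x,y),
\]
again because the $s_i(x,y)$ only involve the bracket structure.

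There is no real obstacle: once one notes that the $s_i(x,y)$ are purely bracket-theoretic and independent of the chosen $p$-mapping, both directions collapse to direct substitution and the verification that elements of $Z(l)$ contribute trivially under $\ad$. The only care needed is to remember that Frobenius-semi-linearity of $\delta$ means $\delta(x+y) = \delta(x) + \delta(y)$ together with $\delta(\lambda x) = \lambda^p \delta(x)$, exactly matching what the subtractions in part~1 produce.
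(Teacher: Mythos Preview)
Your proof is correct and complete: the key observation that the $s_i(x,y)$ are universal bracket expressions independent of the $p$-mapping is exactly what makes both directions work, and you apply it cleanly. The paper itself does not give a proof but simply cites \cite{SF88}, Chapter~2, Section~2.2, Proposition~2.1; your argument is precisely the standard direct verification found there, so there is no substantive difference in approach.
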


\begin{proof}
See \cite{SF88}, Chapter~2, Section~2.2, Proposition~2.1.
\end{proof}

The following corollary is a rewording of the previous theorem. It will be
useful for the following sections where we will present results on Lie
$p$-algebras but in a geometric way. 

\begin{corollary} \label{centerlessunique}
Let $l$ be a Lie algebra over $R$. We define $$\rE\defeq
\Hom_{\Frob}(l,Z(l))=\Hom_R(l\otimes_{R,\Frob}R,Z(l))$$ the set of
Frobenius semi-linear maps from $l$ to $Z(l)$ and let $X$ denote the set of
$p$-mappings on~$l$. Then the map:
\begin{align*}
    \rE\times X&\rightarrow X\times X\\
     (\phi,\gamma)& \mapsto (\phi+\gamma,\gamma)
\end{align*}
is bijective. \hfill\(\Box\)

{\rm \noindent In particular, the theorem says that if there exists a $p$-mapping on $l$, it is unique if and only if $\rE=\{0\}$, i.e. if $l$ is locally free of finite rank, the $p$-mapping is unique if and only if $Z(l)=\{0\}$.}
\end{corollary}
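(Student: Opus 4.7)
The plan is to deduce the corollary directly from the preceding theorem, of which it is essentially a geometric reformulation: no new argument is needed beyond packaging the two parts of that theorem into a single statement about the natural $\rE$-action on $X$.

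First I would check well-definedness of the map, i.e.\ that for every Frobenius semi-linear $\phi\colon l\to Z(l)$ and every $p$-mapping $\gamma$, the sum $\phi+\gamma$ is again a $p$-mapping on $l$; this is precisely part~2 of the preceding theorem. Injectivity is then immediate, since the second coordinate of the image already returns $\gamma$, after which $\phi$ is recovered by subtraction. For surjectivity, given a pair $(\gamma_1,\gamma_2)\in X\times X$, the only possible preimage is $(\gamma_1-\gamma_2,\gamma_2)$; the nontrivial point is that $\gamma_1-\gamma_2$ belongs to $\rE$, i.e.\ is Frobenius semi-linear and takes values in $Z(l)$, which is exactly part~1 of the same theorem.

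For the ``in particular'' statement, the bijection shows that $X$ is either empty or a torsor under $\rE$, so when $X$ is nonempty it is a singleton precisely when $\rE=\{0\}$. If $l$ is locally free of finite rank, then $l\otimes_{R,\Frob}R$ is also locally free of the same rank, hence $\rE=\Hom_R(l\otimes_{R,\Frob}R,Z(l))$ vanishes if and only if $Z(l)=\{0\}$, yielding the stated equivalence. The main obstacle here is essentially psychological: one must recognize that the corollary adds nothing beyond the preceding theorem once the natural $\rE$-action on $X$ has been identified, so the only real work is bookkeeping.
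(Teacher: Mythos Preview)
Your proposal is correct and matches the paper's approach exactly: the paper gives no proof beyond the end-of-proof symbol, having already remarked that the corollary is a rewording of the preceding theorem, and your argument simply unpacks this rewording by invoking parts~1 and~2 of that theorem for surjectivity and well-definedness respectively.
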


The next proposition shows that the hypothesis $(\AL1)$ is essential in the definition of a $p$-mapping, and gives an equivalent condition for a Lie algebra to be restrictable.

\begin{theorem}\label{Jacobson} (\textbf{Jacobson})
Let $l$ be a Lie algebra, free over $R$ with basis $\{x_i\}_{i\in I}$. Let us assume that for all $i \in I$, there exists $y_i\in l$ such that $\ad^p_{x_i}=\ad_{y_i}$. Then, there exists a unique $p$-mapping $(\cdot)^{[p]}:l\rightarrow l$ such that for all $i\in I$, $x_i\p=y_i$.
\end{theorem}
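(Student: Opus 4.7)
The plan is to split the argument into a short uniqueness part and a longer existence part based on the universal enveloping algebra.

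For uniqueness, suppose $\gamma_1$ and $\gamma_2$ are two $p$-mappings on $l$ satisfying $\gamma_1(x_i) = y_i = \gamma_2(x_i)$ for all $i$. Corollary \ref{centerlessunique} identifies $\gamma_1 - \gamma_2$ with an element of $\rE = \Hom_R(l \otimes_{R,\Frob} R,\, Z(l))$. Since $l$ is free with basis $\{x_i\}$, the tensor product $l \otimes_{R,\Frob} R$ is free with basis $\{x_i \otimes 1\}$, and the map $\gamma_1 - \gamma_2$ vanishes on this basis; hence it is zero, proving $\gamma_1 = \gamma_2$.

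For existence, the strategy is to use the universal enveloping algebra $U(l)$. Since $l$ is free over $R$, PBW gives a canonical embedding $l \hookrightarrow U(l)$, and in characteristic $p$ the associative $p$-th power is known to satisfy $\ad_{z^p} = (\ad_z)^p$ for every $z \in U(l)$. The hypothesis $(\ad_{x_i})^p = \ad_{y_i}$ on $l$ then implies that $z_i \defeq x_i^p - y_i \in U(l)$ commutes with every $x_j$, and since the $x_j$ generate $U(l)$ as an algebra, each $z_i$ is in fact central in $U(l)$. I would then define the candidate $p$-mapping by
$$\gamma\!\left(\sum_i a_i x_i\right) \;\defeq\; \left(\sum_i a_i x_i\right)^{\!p} - \sum_i a_i^p \, z_i \;\in\; U(l).$$

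The main technical obstacle is to verify that the right-hand side actually lies in $l$ and not merely in $U(l)$; this is exactly the content of Jacobson's associative formula $(u+v)^p = u^p + v^p + \sum_{i=1}^{p-1} s_i(u,v)$ in characteristic $p$ (where the $s_i$ are iterated commutators of $u,v$). Iterating this formula shows that $(\sum_i a_i x_i)^p - \sum_i a_i^p x_i^p$ is a Lie polynomial in the $x_i$, hence lies in $l$, so that subtracting $\sum_i a_i^p z_i = \sum_i a_i^p x_i^p - \sum_i a_i^p y_i$ cancels the $x_i^p$ terms and leaves an element of $l$. One must also check that the iterated brackets produced coincide with the $s_i$ of Definition \ref{D3}; this is classical, see \cite{SF88}, Chapter~2.

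Granted this, the three axioms are immediate by transferring associative identities in $U(l)$ back to $l$: axiom (AL2) follows from $(\lambda x)^p = \lambda^p x^p$ in $U(l)$ combined with $(\lambda a_i)^p = \lambda^p a_i^p$; axiom (AL3) is Jacobson's associative formula itself, applied to two elements $x, y \in l$ and then translated using the definition of $\gamma$; and axiom (AL1) is where the centrality of the $z_i$ is crucial, giving $[\gamma(x), v] = [x^p - \sum_i a_i^p z_i, v] = [x^p, v] = (\ad_x)^p(v)$ for every $v \in l$. In particular $\gamma(x_i) = x_i^p - z_i = y_i$, so the constructed $\gamma$ has the desired values on the basis.
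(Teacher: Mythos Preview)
Your proof is correct and follows the classical route via the universal enveloping algebra. The paper itself does not give an argument for this statement: it simply refers the reader to \cite{SF88}, Chapter~2, Section~2.2, Theorem~2.3 (and to Jacobson's original \cite{J62}, Chapter~5, Section~7, Theorem~11). What you have written is essentially the proof one finds in those references, so there is no meaningful divergence in approach --- you have filled in what the paper leaves as a citation.

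Two small remarks. First, your use of Corollary~\ref{centerlessunique} for uniqueness is legitimate (it appears before this theorem in the paper), though the argument can also be done directly: if $\gamma_1,\gamma_2$ agree on the basis $\{x_i\}$, then $(\AL2)$ and $(\AL3)$ force agreement on all of $l$, without invoking the center. Second, since the paper stresses that these results are stated over a general commutative $\mathbb{F}_p$-algebra $R$ rather than a field, it is worth noting explicitly that the PBW embedding $l\hookrightarrow U(l)$ you rely on is valid because $l$ is assumed \emph{free} as an $R$-module; this is exactly the hypothesis in the statement and is what makes the argument go through beyond the field case.
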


\begin{proof}
You can find the proof in \cite{SF88}, Chapter~2, Section~2.2, Theorem~2.3, but
the initial version is due to Jacobson, in \cite{J62},
Chapter~5, Section~7, Theorem~11.
\end{proof}


\begin{example} (Zassenhaus). You can have a look at \cite{SF88}, Chapter~1, Section~2.7, Theorem~7.9, or at Zassenhaus's article: \cite{Z39} for more details.
Let $l$ be a free Lie algebra over $R$, with Killing form denoted by $B$. We suppose that
$B$ is \textit{non-degenerate}, that is we suppose that the following map 
\begin{align*}
    l&\longrightarrow \Hom_R(l,R)\\
    x & \longmapsto B(x,\cdot)
\end{align*}is an isomorphism.
Then there exists a unique $p$-mapping on $l$.

\end{example}

\subsection{Vector bundles, quotient and image}

In this section, $S$ is a base scheme. We will study vector bundles
equipped with a bracket, in order to study Lie algebras in families. We
start by giving standard definitions and notations about vector bundles.
We use the notation $\mathbb{O}_S$ for the ring scheme $\Spec(\mathcal{O}_S[X])$. 
\begin{definition}\label{N1}
Throughout all this paper, we call a \textit{generalized vector bundle} any scheme which is an $\mathbb{O}_S$-module, isomorphic to an
$\mathbb{O}_S$-module of the form $\mathbb{V(\cF)}\defeq
\Specr(\Sym(\cF))$ with $\cF$ any quasi-coherent $\mathcal{O}_S$-module.\\
We also call \textit{vector bundles} those for which $\cF$ is locally free
of finite rank. In this case, we use the usual covariant equivalence for
which the sheaf of sections of our scheme is $\cF^\vee$.
\end{definition}

\begin{remark*}
Let $F=\Specr(\Sym(\cF^\vee))$ be a vector bundle.
Then $\mathcal{F}$ is the restriction of the functor of points
of $F$ to the small Zariski site of $S$, that is, to the open
subschemes $U\hookrightarrow S$.
\end{remark*}

\begin{definition}\label{D7}
Let $f:E\rightarrow F$ be a morphism of generalized vector bundles over $S$. We define the \textit{kernel} and the \textit{image} of $f$ as the fppf kernel sheaf of $f$ and the fppf image sheaf of $f$, i.e. for all fppf covers $T\rightarrow S$, we have $$\im(f)(T)=\big\{y\in F(T), \exists T'\rightarrow T \text{ fppf covering and } x'\in E(T') \text{ such that } f(x')=y_{|T'}\big\}.$$
\end{definition}

\begin{remark*}
The image is not representable by a scheme in general, but its
formation commutes with base change.

In the following, exact sequences of (generalized) vector bundles
will be understood as exact sequences of fppf sheaves of modules. 
\end{remark*}

\begin{definition}
Let $X\rightarrow S$ be a vector bundle and $Y\hookrightarrow X$ an
$\mathbb{O}_S$-submodule of $X$. We say that $Y$ is a
\textit{subbundle} of $X$ if $Y$ is a vector bundle and $X/Y$ is also a
vector bundle. 
\end{definition}

\begin{remark*}
It is equivalent to be a subbundle of $X$ and to be a locally direct factor of $X$. 
\end{remark*}

\begin{proposition}\label{flatifflocfree}
Let $F\rightarrow S$ be a generalized vector bundle. Let us write
$F=\Specr(\Sym(\cF))$ for a given quasi-coherent $\mathcal{O}_S$-module
$\mathcal{F}$. Then: 
\begin{enumerate}
\item[{\rm 1.}] $F\rightarrow S$ is of finite presentation if and only
if $\cF$ is of finite presentation.
\item[{\rm 2.}] If $\cF$ is of finite presentation, then
 $$ F\rightarrow S \text{ is flat }\Leftrightarrow F\rightarrow S \text{
 is smooth } \Leftrightarrow \cF \text{ is locally free of finite rank.}$$
\end{enumerate}
\end{proposition}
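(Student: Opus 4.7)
The plan is to exploit the $\mathbb{N}$-grading $\Sym(\cF)=\bigoplus_{n\ge 0}\Sym^n(\cF)$, which realizes $\cF=\Sym^1(\cF)$ both as a graded $\mathcal{O}_S$-submodule of $\Sym(\cF)$ and, via the canonical projection onto the degree-$1$ piece, as a direct summand. This will reduce each statement to a standard fact about quasi-coherent modules, and in particular let the degree-$1$ component carry the relevant information from $\Sym(\cF)$ back to $\cF$.

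For part~1, I work Zariski-locally on $S$ so that $F=\Spec(\Sym_R(M))$ for some $R$-module $M$. The implication $\Leftarrow$ is immediate: applying $\Sym_R$ to a finite presentation $R^m\to R^n\to M\to 0$ yields $\Sym_R(M)\cong R[x_1,\dots,x_n]/J$ where $J$ is generated by the images of the $m$ relators, exhibiting $\Sym_R(M)$ as a finitely presented $R$-algebra. For the converse, pick finitely many algebra generators of $\Sym_R(M)$ and decompose each into homogeneous components; the key observation is that in a product of homogeneous elements of strictly positive degrees, the total degree equals $1$ only if the product consists of a single factor of degree~$1$. Hence the degree-$1$ components of the chosen generators already generate $M=\Sym^1_R(M)$ as an $R$-module. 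For finite presentation of $M$, I would then present $\Sym_R(M)$ as $R[x_1,\dots,x_n]/J$ with the $x_i$ in degree~$1$, take homogeneous generators of the graded ideal $J$, and observe that the degree-$1$ piece $J_1$, which is precisely the kernel of $R^n\twoheadrightarrow M$, is spanned by those generators of $J$ that happen to have degree~$1$.

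For part~2, smooth $\Rightarrow$ flat is standard, and locally free of finite rank $\Rightarrow$ smooth is immediate: on a trivialisation, $F$ is just affine space $\mathbb{A}^n_S$. The content is flat $\Rightarrow$ locally free of finite rank. Here the direct-summand observation above is decisive: $\cF$ is an $\mathcal{O}_S$-module direct summand of $\Sym(\cF)$, so flatness of $\Sym(\cF)$ as an $\mathcal{O}_S$-module passes to $\cF$. Combined with the finite presentation assumption, $\cF$ is locally free of finite rank by the standard characterisation of finitely presented flat modules.

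The main subtlety I anticipate is the forward direction of part~1: transferring ``finitely many relations on the algebra'' into ``finitely many relations on the module.'' The $\mathbb{N}$-grading resolves this cleanly once one replaces arbitrary generators of $J$ by homogeneous ones, but one must be careful to distinguish ``finite presentation as an $\mathcal{O}_S$-algebra'' (which is what finite presentation of the scheme $F\to S$ provides) from ``finite presentation of $\cF$ as an $\mathcal{O}_S$-module.'' Everything else is essentially bookkeeping with the graded decomposition.
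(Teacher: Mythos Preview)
Your argument is correct. The paper does not actually prove this proposition; it simply cites G\"ortz--Wedhorn, Proposition~7.41. So there is no ``paper's proof'' to compare against beyond the reference, and what you have written is a self-contained justification of the cited result.

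A couple of minor remarks on the forward direction of part~1, where you yourself flag the subtlety. After showing $M$ is finitely generated and writing $\Sym_R(M)=R[x_1,\dots,x_n]/J$ with the $x_i$ in degree~$1$, you implicitly use that $J$ is a finitely generated ideal. This is the standard ``invariance of finite presentation'' lemma (if a finitely presented $R$-algebra is written as a quotient of a polynomial ring in finitely many variables, the kernel is finitely generated); it is worth naming it explicitly. Once $J$ is finitely generated and homogeneous, replacing each generator by its homogeneous components gives finitely many homogeneous generators, and since $J_0=0$ (the structure map $R\to\Sym_R(M)$ is injective onto degree~$0$), every homogeneous generator has degree~$\ge 1$; your degree-count then shows $J_1$ is the $R$-span of those of degree exactly~$1$, which is what you need. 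With that small clarification the argument is complete.
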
 

\begin{proof}
See Görtz and Wedhorn's book \cite{GW20}, Chapter~7, Proposition~7.41.
\end{proof}

For the following, it will be useful to characterize when an
$\mathbb{O}_S$-submodule of a vector bundle is in fact a subbundle. In
order to do this, we establish these two preliminary lemmas.

\begin{lemma}\label{L1}
\begin{enumerate}
\item[{\rm 1.}]
Let $R$ be a Noetherian ring. Then any surjective endomorphism
$\alpha: R \rightarrow R$ is an automorphism.
 \item[{\rm 2.}] Let $R$ be a ring and $\alpha : R' \rightarrow R'$ a surjective $R$-algebra morphism. Then if $R\rightarrow R'$ is of finite presentation, $\alpha$ is an automorphism.
\end{enumerate}
\end{lemma}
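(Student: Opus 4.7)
The plan is to prove the two parts separately, with part~2 reducing to part~1 via a standard Noetherian approximation argument.

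For part~1, I would use the familiar chain argument for surjective self-maps of Noetherian objects. Consider the ascending chain of ideals $\ker(\alpha)\subset \ker(\alpha^2)\subset \ker(\alpha^3)\subset \cdots$ in $R$. By Noetherianity it stabilizes: there exists $n\geq 1$ with $\ker(\alpha^n)=\ker(\alpha^{n+1})$. To conclude, pick $x\in \ker(\alpha)$; since $\alpha$ is surjective, so is $\alpha^n$, so $x=\alpha^n(y)$ for some $y\in R$. Then $\alpha^{n+1}(y)=\alpha(x)=0$, hence $y\in \ker(\alpha^{n+1})=\ker(\alpha^n)$, whence $x=\alpha^n(y)=0$. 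Thus $\alpha$ is injective, so bijective.

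For part~2, the plan is to descend the data to a Noetherian subring and invoke part~1. Write $R=\varinjlim_i R_i$ as the filtered colimit of its finitely generated $\Z$-subalgebras, each of which is Noetherian by Hilbert's basis theorem. Choose a presentation $R'\simeq R[X_1,\dots,X_n]/(f_1,\dots,f_m)$ and encode $\alpha$ by a choice of lifts of $\alpha(\overline{X_j})\in R'$ to polynomials in $R[X_1,\dots,X_n]$. Since only finitely many coefficients from $R$ are involved in the $f_k$, in these lifts, and in the finitely many witness equations $\overline{X_j}=P_j(\alpha(\overline{X_1}),\dots,\alpha(\overline{X_n}))$ expressing that $\alpha$ is surjective, all this data descends to some index $i_0$. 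One obtains a finitely presented $R_{i_0}$-algebra $R'_{i_0}$ with a surjective $R_{i_0}$-algebra endomorphism $\alpha_{i_0}$ satisfying $R'=R'_{i_0}\otimes_{R_{i_0}}R$ and $\alpha=\alpha_{i_0}\otimes_{R_{i_0}}R$. As a finitely generated algebra over the Noetherian ring $R_{i_0}$, the ring $R'_{i_0}$ is itself Noetherian, so part~1 applies to give that $\alpha_{i_0}$ is an automorphism. Base-changing to $R$ then shows $\alpha$ is an automorphism.

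The only delicate step is the descent in part~2: one must carefully ensure that both $(R',\alpha)$ and the finitely many equations witnessing surjectivity of $\alpha$ descend to a common index $i_0$. This is a routine EGA~IV~\S8-style argument, after which the conclusion is immediate from part~1.
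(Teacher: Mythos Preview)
Your proof is correct and follows essentially the same approach as the paper: the ascending chain of kernels argument for part~1, and Noetherian approximation (descent to a finitely generated $\mathbb{Z}$-subalgebra) followed by an application of part~1 for part~2. Your treatment of part~2 is in fact slightly more careful than the paper's, which simply asserts that $\alpha$ descends to a surjective $\alpha_0$ without spelling out the need to descend the surjectivity witnesses.
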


\begin{proof}
1. For a contradiction, let us assume that $\alpha$ is not injective: let
$x\in \ker(\alpha), x\neq 0 $ and $n\in \mathbb{N}$. Then $\alpha^n$ is
surjective, so there exists $y \in R$ such that $x=\alpha^n(y)$. Thus,
$\alpha^{n+1}(y)=0$. Then $y\in \ker(\alpha^{n+1})\setminus
\ker(\alpha^n)$. Thus, the sequence $(\ker(\alpha^n))_{n\geq 0}$ is not
stationary, then we get a contradiction. \\
2. Now we suppose that $R$ is any ring and $R\rightarrow R'$ is of finite
presentation. Then by standard arguments, there exists a subring $R_0
\subset R$ of finite type over $\mathbb{Z}$ and an $R_0$-algebra
$R_0\rightarrow R_0'$ of finite presentation such that $R'\simeq
R'_0\otimes_{R_0} R$. Then if $\alpha: R'\rightarrow R'$ is a surjective
$R$-algebra morphism, we can write $\alpha=\alpha_0 \otimes_{R_0} \id_R:
R'_0\otimes R \rightarrow R'_0\otimes R$ where $\alpha_0:R'_0\rightarrow
R'_0$ is surjective. Then thanks to the previous point, $\alpha_0$ is an
automorphism, then so is $\alpha$, as we wanted.
\end{proof}

\begin{lemma} \label{L2}
Let $X\rightarrow S$ be a scheme and $G\rightarrow S$ a flat group scheme
of finite presentation, acting on $X\rightarrow S$. Let $\pi: X\rightarrow
Y$ be a  faithfully flat $S$-morphism of finite presentation and
$G$-invariant. Let us assume that the morphism \begin{align*}
    G\times_S X &\rightarrow X\times_Y X\\
    (g,x)&\mapsto (x,g\cdot x)
\end{align*}
is an isomorphism.
Then, $Y$ is the quotient of $X$ by $G$ in the category of fppf sheaves on $S$.
\end{lemma}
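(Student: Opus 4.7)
The plan is to deduce the statement from faithfully flat descent applied to the cover $\pi:X\to Y$, and then to translate the resulting equalizer on $X\times_Y X$ into an action groupoid by means of the hypothesized isomorphism. First I would observe that because $\pi$ is faithfully flat and of finite presentation, it is an fppf covering, hence an effective epimorphism in the category of fppf sheaves on $S$. Concretely, this will give, for every fppf sheaf $F$ on $S$, a natural bijection between $\Hom(Y,F)$ and the set of $S$-morphisms $f:X\to F$ satisfying $f\circ p_1 = f\circ p_2$, where $p_1,p_2:X\times_Y X\to X$ are the two projections.

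Next I would translate this equalizer condition using the given isomorphism $\varphi:G\times_S X\xrightarrow{\sim} X\times_Y X$, $(g,x)\mapsto (x,g\cdot x)$. A direct check will show that under $\varphi$ the first projection $p_1$ corresponds to the projection $(g,x)\mapsto x$, while the second projection $p_2$ corresponds to the action map $(g,x)\mapsto g\cdot x$. Thus the condition $f\circ p_1 = f\circ p_2$ is equivalent to $f(x) = f(g\cdot x)$ for every $(g,x)\in G\times_S X$, i.e. to $G$-invariance of $f$ (the $G$-invariance of $\pi$ itself is what makes the comparison map well defined on both sides).

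Combining the two steps, I would obtain, for every fppf sheaf $F$ on $S$, a natural bijection
$$\Hom(Y,F)\xrightarrow{\sim} \Hom(X,F)^G,$$
which is exactly the universal property characterising $Y$ as the quotient of $X$ by $G$ in the category of fppf sheaves on $S$. The only non-formal ingredient is the effective descent statement invoked in the first step; once it is in hand, the rest is simply an unwinding of the hypothesis that $\varphi$ is an isomorphism, and I do not anticipate a serious obstacle, the argument being essentially the standard reformulation of fppf descent in the presence of a free groupoid action.
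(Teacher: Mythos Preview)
Your proposal is correct and follows essentially the same approach as the paper: both use that $\pi$ is an fppf cover to obtain the equalizer sequence $F(Y)\to F(X)\rightrightarrows F(X\times_Y X)$ for any fppf sheaf $F$, and then transport the two projections to the projection and action maps via the given isomorphism $G\times_S X\cong X\times_Y X$. The paper's write-up is more terse, but the content is identical.
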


\begin{proof} 
Let $F$ be an fppf sheaf on $S$ and $f:X\rightarrow F$ a $G$-invariant
morphism. 
As $X\rightarrow Y$ is an fppf morphism and  $F$ is an fppf sheaf, the
following sequence is exact:
$$F(Y)\xrightarrow{\pi^*}F(X) \rightrightarrows F(X\times_Y X)$$
and this sequence is isomorphic to this one:
$$F(Y)\overset{\pi^*}{\longrightarrow} F(X)
\xrightrightarrows[proj.]{act.} F(G\times_S X).$$
And this proves the lemma.
\end{proof}

\begin{proposition}\label{subflat}
Let $E\rightarrow S$ be a vector bundle and $F\hookrightarrow E$ an
$\mathbb{O}_S$-submodule of finite presentation. Then $F$ is a subbundle
of $E$ if and only if  $F\rightarrow S$ is flat.
\end{proposition}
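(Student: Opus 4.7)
My approach is to translate the geometric statement about $F \hookrightarrow E$ into an algebraic statement about quasi-coherent sheaves and then invoke Proposition \ref{flatifflocfree}. The easy direction is immediate: if $F$ is a subbundle, then by definition $F$ is a vector bundle, and hence by Proposition \ref{flatifflocfree}(2), $F \to S$ is flat.

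For the converse, first write $E = \Specr(\Sym(\mathcal{E}))$ with $\mathcal{E}$ locally free of finite rank. I would then check that the sub-$\mathbb{O}_S$-module $F$ is of the form $\Specr(\Sym(\mathcal{F}))$ for a quasi-coherent sheaf $\mathcal{F}$, and that the inclusion $F \hookrightarrow E$ corresponds to a surjection $\mathcal{E} \twoheadrightarrow \mathcal{F}$ (the contravariance coming from the convention in Definition \ref{N1}). Since $F$ is of finite presentation, $\mathcal{F}$ is of finite presentation by Proposition \ref{flatifflocfree}(1); and flatness of $F \to S$ then implies, via Proposition \ref{flatifflocfree}(2), that $\mathcal{F}$ is locally free of finite rank. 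In particular, $F$ is already a vector bundle.

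Setting $\mathcal{K} := \ker(\mathcal{E} \twoheadrightarrow \mathcal{F})$, the short exact sequence $0 \to \mathcal{K} \to \mathcal{E} \to \mathcal{F} \to 0$ splits locally on $S$ since $\mathcal{F}$ is locally free: one lifts a local basis of $\mathcal{F}$ to $\mathcal{E}$. Hence $\mathcal{K}$ is a local direct summand of $\mathcal{E}$, and therefore locally free of finite rank. Applying $\Specr \circ \Sym$ transforms these local splittings into local decompositions $E \cong F \times_S \Specr(\Sym(\mathcal{K}))$, which exhibits $F$ as a local direct factor of $E$. By the remark following the definition of subbundle, this is equivalent to $F$ being a subbundle; equivalently, the quotient $E/F$ in the category of fppf sheaves is $\Specr(\Sym(\mathcal{K}))$, which is a vector bundle.

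The main subtle point will be the very first step of the converse, namely verifying that an $\mathbb{O}_S$-submodule $F \hookrightarrow E$ of finite presentation is necessarily of the form $\Specr(\Sym(\mathcal{F}))$ with $\mathcal{E} \twoheadrightarrow \mathcal{F}$ a surjection of quasi-coherent sheaves, rather than some more exotic fppf subsheaf. Once this sheaf-theoretic translation is in place, the rest is a routine local-splitting argument combined with the two equivalences recorded in Proposition \ref{flatifflocfree}.
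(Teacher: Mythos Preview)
Your proposal is correct and takes a genuinely different route from the paper's proof. You work with the contravariant convention, writing $E=\Specr(\Sym(\mathcal{E}))$ and translating $F\hookrightarrow E$ into a surjection $\mathcal{E}\twoheadrightarrow\mathcal{F}$; once $\mathcal{F}$ is seen to be locally free, local projectivity gives a local splitting, and you invoke the remark that subbundles are exactly local direct factors. The paper instead works covariantly with sheaves of sections $\mathcal{F}\hookrightarrow\mathcal{E}$: it argues that this inclusion is universally injective (because a monomorphism of schemes stays a monomorphism after base change), deduces that the cokernel $\mathcal{Y}$ is flat hence locally free, and then explicitly identifies the fppf quotient $E/F$ with $\Specr(\Sym(\mathcal{Y}^\vee))$ by showing that the action map $F\times_S E\to E\times_Y E$ is an isomorphism via Lemmas~\ref{L1} and~\ref{L2}. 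Your argument is shorter and avoids these two lemmas entirely; the paper's argument, on the other hand, sets up exactly the machinery (action-map isomorphism plus Lemma~\ref{L2}) that is reused verbatim in the proof of Theorem~\ref{imagerpz}, so its extra work is not wasted. The ``subtle point'' you flag---that an $\mathbb{O}_S$-submodule scheme of finite presentation of $E$ is automatically of the form $\Specr(\Sym(\mathcal{F}))$ with $\mathcal{E}\twoheadrightarrow\mathcal{F}$---is equally present (and equally unaddressed) in the paper's proof, which simply applies Proposition~\ref{flatifflocfree} to~$F$ without comment.
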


\begin{proof}
Let us assume $F$ is a subbundle of $E$. Then by definition, $F\rightarrow
S$ is flat. 
Conversely, let us suppose $F\rightarrow S$ is flat. Then, thanks to
Proposition~\ref{flatifflocfree}, we know that its sheaf of sections is
locally free of finite rank. Then $F$ is a vector bundle. We only need to
show that $E/F$ is also a vector bundle. Let us denote by $\mathcal{E}$
and by $\cF$ the sheaves of sections of  $E$ and $F$. Then $E$ and
$\mathcal{E}$, and $F$ and $\cF$ determine each other. Moreover, for any
$f:S'\rightarrow S$ base change and for any vector bundle $V\rightarrow
S$, we have $(V\times_S S')_{|Zar}=f^*(V_{|Zar})$, and because a
monomorphism of schemes remains a monomorphism after any base change, we
know that the injection $\mathcal{F}\hookrightarrow \mathcal{E}$ remains
injective after any base change. Then the cokernel $\mathcal{Y}$ of this injection is $\mathcal{O}_S$-flat. Because it is also of finite
presentation, it is locally free of finite rank thanks to Proposition
\ref{flatifflocfree}. Let us show now that $Y\defeq
\Specr(\Sym(\mathcal{Y}^\vee))$ is actually the quotient
$E/F$. We have the following exact sequence:
$$0\rightarrow \mathcal{F}\hookrightarrow \mathcal{E}\rightarrow
\mathcal{Y}\rightarrow 0.$$
Dualizing this sequence, we obtain:
$$0\rightarrow \mathcal{Y}^\vee \rightarrow \E^\vee \rightarrow \cF^\vee
\rightarrow 0 .$$
 As $F$ is a subgroup of $E$, it acts on $E$ by left translation. We then
 have the action morphism 
\begin{align*}
    F\times_S E&\rightarrow E\times_S E\\ 
    (f,e)&\mapsto (f+e, e)
\end{align*}
given on the rings by:
\begin{align*}
   \Phi: \Sym(\E^\vee)\otimes_{\Sym(\mathcal{O}_S^\vee)}\Sym(\E^\vee)&\rightarrow \Sym(\cF^\vee)\otimes_{\Sym(\mathcal{O}_S^\vee)} \Sym(\E^\vee)\\
    1\otimes X&\mapsto 1\otimes X \\
    X\otimes 1 &\mapsto \overline{X}\otimes 1 + 1 \otimes X
\end{align*}
for all $X \in \E^\vee=\Sym^1(\mathcal{E}^\vee)$.
Using the definition we see that the elements of the form\\ $X\otimes 1-1
\otimes X$ with $X\in \mathcal{Y}^\vee$ are in the kernel of $\Phi$, then
we obtain a factorized map
$$\tilde{\Phi}:\Sym(\E^\vee)\otimes_{\Sym(\mathcal{Y}^\vee)}\Sym(\E^\vee)
\rightarrow \Sym(\cF^\vee)\otimes_{\Sym(\mathcal{O}_S^\vee)}
\Sym(\E^\vee).$$
Let us show that $\Tilde{\Phi}$ is an isomorphism. First, one can see that
the source and the target of $\tilde{\Phi}$ are sheaves of polynomial
algebras, with the same number of variables, equal to
$\rk(\cF)+\rk(\mathcal{E})$. Moreover, $\tilde{\Phi}$ is surjective
because 
\begin{align*}
    \tilde{\Phi}(1\otimes X)&=1\otimes X \text{ and}\\
    \tilde{\Phi}(X\otimes 1 -1\otimes X)&=\Bar{X}\otimes 1.
\end{align*}
Thus Lemma~\ref{L1} 2. shows that $\tilde{\Phi}$ is an isomorphism. Then
we have an isomorphism $$ F\times_S E\isor E\times_Y E.$$ Hence, using
Lemma~\ref{L2}, we see that $Y$ is the quotient of $E$ by $F$ in the
category of fppf sheaves on $S$, so $E/F=Y=\Specr(\Sym(\mathcal{Y}^\vee))$
is a vector bundle and $F$ is a subbundle of~$E$.
\end{proof}

\begin{proposition}\label{finitepresentation}
Let $E_1$ and $E_2$ be two generalized vector bundles. Let $f: E_1\rightarrow E_2$ be a morphism of generalized vector bundles. If $E_1$ is of finite presentation and if $E_2$ is of finite type, then $\ker(f)$ is of finite presentation.
\end{proposition}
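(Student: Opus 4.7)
The plan is to transport the problem to the side of quasi-coherent sheaves via the antiequivalence $\mathcal{F}\mapsto \mathbb{V}(\mathcal{F})=\Specr(\Sym(\mathcal{F}))$ of Definition~\ref{N1}, and then to conclude by a standard finite presentation argument on modules.

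First I would write $E_i=\mathbb{V}(\mathcal{F}_i)$ for quasi-coherent $\mathcal{O}_S$-modules $\mathcal{F}_i$, so that by Proposition~\ref{flatifflocfree} (together with its routine finite type analogue, which reduces to $\Sym(\mathcal{F}_2)$ being an $\mathcal{O}_S$-algebra of finite type) the hypotheses translate to: $\mathcal{F}_1$ is of finite presentation and $\mathcal{F}_2$ is of finite type. The $\mathbb{O}_S$-linear morphism $f\colon E_1\to E_2$ corresponds contravariantly to an $\mathcal{O}_S$-linear map $\phi\colon \mathcal{F}_2\to \mathcal{F}_1$, since on $T$-points $f(T)$ is given by precomposition with $\phi_T$ on $\Hom_{\mathcal{O}_T}(-,\mathcal{O}_T)$.

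Next I would identify $\ker(f)$ with $\mathbb{V}(\coker(\phi))$: for $\alpha\in \Hom_{\mathcal{O}_T}(\mathcal{F}_{1,T},\mathcal{O}_T)=E_1(T)$, one has $f(\alpha)=\alpha\circ\phi_T=0$ if and only if $\alpha$ factors through $\coker(\phi)_T$, so functorially $\ker(f)(T)=\Hom_{\mathcal{O}_T}(\coker(\phi)_T,\mathcal{O}_T)=\mathbb{V}(\coker(\phi))(T)$. In particular $\ker(f)$ is representable by a generalized vector bundle, and it suffices to prove that $\coker(\phi)$ is of finite presentation.

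For this last step I would apply the classical module-theoretic lemma: from a right exact sequence $A\to B\to C\to 0$ with $A$ finitely generated and $B$ finitely presented, $C$ is finitely presented. Indeed, locally one takes a finite presentation $\mathcal{O}_S^a\to\mathcal{O}_S^b\to \mathcal{F}_1\to 0$ and enlarges the relations by adjoining lifts to $\mathcal{O}_S^b$ of finitely many generators of the image $\phi(\mathcal{F}_2)$. Applied to $\mathcal{F}_2\xrightarrow{\phi}\mathcal{F}_1\to\coker(\phi)\to 0$, this yields finite presentation of $\coker(\phi)$, whence Proposition~\ref{flatifflocfree} gives the conclusion. I do not expect any serious obstacle; the only mildly delicate points are verifying that the $\mathbb{O}_S$-module structure on $E_i$ matches the natural one coming from $\mathcal{F}_i$, and that the fppf kernel of $f$ agrees with $\mathbb{V}(\coker(\phi))$, both immediate from the functorial description above.
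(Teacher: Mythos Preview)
Your argument is correct and matches the paper's approach: both identify $\ker(f)$ as $\Specr(\Sym(\mathcal{F}_1)/(\phi(\mathcal{F}_2)))=\mathbb{V}(\coker\phi)$ and deduce finite presentation from $\mathcal{F}_1$ being finitely presented and $\mathcal{F}_2$ finitely generated. The only cosmetic difference is that the paper stays at the algebra level (a finitely presented $\mathcal{O}_S$-algebra modulo a finitely generated ideal is finitely presented), whereas you pass through the module $\coker(\phi)$ and then invoke Proposition~\ref{flatifflocfree}; the content is the same.
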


\begin{proof}
By definition, we have:
$$\ker(f)=\Specr(\Sym(\cF_1)\otimes_{\Sym(\cF_2)}\mathcal{O}_S)=\Specr(\Sym(\cF_1)\otimes_{\Sym(\cF_2)}\Sym(\cF_2)/(\cF_2))$$ $$=\Specr(\Sym_{\Sym(\cF_2)}(\cF_1)/f^\#(\cF_2)).$$
Then because $\cF_1$ is of finite presentation and $\cF_2$ is of finite type, $\ker(f)$ is of finite presentation.
\end{proof}

The next statement is a general result about images and kernels
of morphisms of vector bundles, for which we could not find a proof in
the literature. It gives conditions for the kernel and the image
of a vector bundle morphism to be subbundles. For this result,
we first recall that, for any morphism of schemes $f:X\rightarrow Y$,
there exists a smallest closed subscheme of $Y$ that factorizes $f$.
We denote it by $\imsc(f)$ and it is called \textit{the schematic
image of $f$}. See \cite{GW20} Definition and Lemma~10.29. 

\begin{theorem}\label{imagerpz}
Let $f:E\defeq\Specr(\Sym(\E^\vee))\rightarrow F\defeq \Specr(\Sym(\cF^\vee))$ be a morphism of $S$-vector bundles with kernel $K$ and with image $I$. Then the following are equivalent:
\begin{enumerate}
\item[{\rm 1)}] $K\rightarrow S$ is flat.
\item[{\rm 2)}] $I\rightarrow S$ is representable by an $S$-scheme of finite presentation.
\end{enumerate}
Moreover, when these conditions are satisfied, we have:
\begin{enumerate}
\item[{\rm (i)}] $K$ is a subbundle of $E$ and $I$ is a subbundle of $F$. Moreover, the induced morphism $E/K\rightarrow I$ is an isomorphism.
\item[{\rm (ii)}] $I=\imsc(f)$.
\item[{\rm (iii)}] The sheaf of sections of $K$ is $\cK\defeq \ker(\E\rightarrow \cF)$, that of $I$ is $\mathcal{I}\defeq \im(\E\rightarrow \cF)$, and $\E/\cK \simeq \mathcal{I}$. \\
Moreover, the formation of $K,I,$ and  $\cK, \mathcal{I}$ commute with base change.
\end{enumerate}
\end{theorem}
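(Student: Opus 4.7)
The plan is to work throughout with the $\mathcal{O}_S$-module sheaves $\mathcal{K} \defeq \ker(\mathcal{E} \to \mathcal{F})$ and $\mathcal{I} \defeq \im(\mathcal{E} \to \mathcal{F})$, and to show that either hypothesis forces these to be locally free of finite rank, at which point $K = \Specr(\Sym(\mathcal{K}^\vee))$ and $I = \Specr(\Sym(\mathcal{I}^\vee))$, so that (i), (ii), (iii) reduce to manipulations with symmetric algebras and the covariant equivalence between vector bundles and locally free sheaves.

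For the direction $1)\Rightarrow 2)$ and properties (i), (ii), (iii): by Proposition~\ref{finitepresentation} the subsheaf $K\hookrightarrow E$ is of finite presentation, so Proposition~\ref{subflat} applied to the flat $\mathbb{O}_S$-submodule $K$ makes $K$ a subbundle, forcing $\mathcal{K}$ to be locally free and $\mathcal{I}=\mathcal{E}/\mathcal{K}$ to be locally free. To upgrade the injection $\mathcal{I}\hookrightarrow\mathcal{F}$ to a locally split one, I would use the $\operatorname{Tor}$ long exact sequence of $0\to\mathcal{I}\to\mathcal{F}\to\mathcal{F}/\mathcal{I}\to 0$ tensored with a residue field $k(s)$: since $\mathcal{F}$ is flat, the vanishing of $\operatorname{Tor}_1(\mathcal{F}/\mathcal{I},k(s))$ reduces to the injectivity of $\mathcal{I}\otimes k(s)\to\mathcal{F}\otimes k(s)$. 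That injectivity holds because $K$ being a vector bundle of locally constant rank forces $\dim\ker(f_s)$, and hence $\rk(f_s)$, to be locally constant, equal to $\rk(\mathcal{I})=\dim(\mathcal{I}\otimes k(s))$, so the dimensions of source and image coincide. Hence $\mathcal{F}/\mathcal{I}$ is flat, and being finitely presented it is locally free by Proposition~\ref{flatifflocfree}. Dualizing gives a surjection $\mathcal{F}^\vee\twoheadrightarrow\mathcal{I}^\vee$, which upon applying $\Sym$ yields the closed immersion $\Specr(\Sym(\mathcal{I}^\vee))\hookrightarrow F$. Writing $f$ as $E\twoheadrightarrow E/K\overset{\sim}{\longrightarrow}\Specr(\Sym(\mathcal{I}^\vee))\hookrightarrow F$ and noting that $E\twoheadrightarrow E/K$ is an fppf-surjection of fppf sheaves identifies the fppf image with $\Specr(\Sym(\mathcal{I}^\vee))$, which proves 2), (i) and the isomorphism $E/K\simeq I$. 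For (ii) the faithful flatness of $E\to\Specr(\Sym(\mathcal{I}^\vee))$ forces every closed subscheme of $F$ through which $f$ factors to contain $\Specr(\Sym(\mathcal{I}^\vee))$, so this is $\imsc(f)$. Property (iii) and compatibility with base change then come from the local freeness of $\mathcal{K}$, $\mathcal{I}$ and $\mathcal{F}/\mathcal{I}$, which ensures the formation of kernel, image and quotient commutes with arbitrary tensor products.

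For the direction $2)\Rightarrow 1)$, the strategy is to reduce to the pointwise statement via semi-continuity. The formation of the fppf image sheaf commutes with arbitrary base change, a direct check from Definition~\ref{D7}, so for each $s\in S$ the fiber $I_s$ is the fppf image of the $k(s)$-linear map $f_s$. For a linear map of vector spaces over a field the fppf image equals the linear image (a linear surjection of affine spaces is smooth, hence fppf), so $I_s$ is a linear subspace of $F_s$ of dimension $r_s\defeq\rk(f_s)$. Now $\dim I_s$ is upper semi-continuous because $I\to S$ is of finite presentation (Chevalley's theorem), while $r_s$ is lower semi-continuous (the locus $\{r_s\geq r\}$ is cut out by the non-vanishing of some $r\times r$ minor of the matrix of $f$). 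Combined, $r_s$ is locally constant on $S$. Gaussian elimination in a neighborhood where a full-rank $r\times r$ minor is a unit then exhibits $\mathcal{K}$, $\mathcal{I}$ and $\mathcal{F}/\mathcal{I}$ as locally free, and identifying $K$ with $\Specr(\Sym(\mathcal{K}^\vee))$ makes $K$ a vector bundle, in particular flat.

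The hard part will be $2)\Rightarrow 1)$: the fppf image is a delicate object with no scheme-theoretic handle beyond the assumed representability, so the combined upper and lower semi-continuity argument is really what pins down constancy of the fiber rank. By contrast, $1)\Rightarrow 2)$ is essentially linear-algebraic, and the only non-obvious step is the local freeness of $\mathcal{F}/\mathcal{I}$, which the $\operatorname{Tor}$ argument above handles.
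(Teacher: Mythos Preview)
Your argument is correct, and for $1)\Rightarrow 2)$ it is essentially the paper's: both invoke Propositions~\ref{finitepresentation} and~\ref{subflat} to make $K$ a subbundle, identify $E/K$ with the scheme $\Specr(\Sym(\mathcal{I}^\vee))$, and exhibit it as a closed subscheme of $F$. Your $\operatorname{Tor}$ argument for the local freeness of $\mathcal{F}/\mathcal{I}$ is a spelled-out variant of what the paper obtains by reapplying Proposition~\ref{subflat} to the flat submodule $I\hookrightarrow F$; both are valid and of the same strength.

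For $2)\Rightarrow 1)$ your route is genuinely different. The paper argues as follows: for each $s\in S$ one has $E_s/K_s\simeq I_s$, so $E_s\to I_s$ is flat; since $E$, $I$ are of finite presentation over $S$ and $E\to S$ is flat, the \emph{crit\`ere de platitude par fibres} (\cite{EGA4}, III, 11.3.10) gives that $E\to I$ is flat; topological surjectivity of $E\to I$ then forces $I\to S$ flat, and $K=E\times_I S$ is flat by base change. Your approach instead combines the upper semi-continuity of $\dim I_s$ (Chevalley, via the zero section $S\to I$) with the lower semi-continuity of $\rk(f_s)$ (minors) to pin down constancy of the rank, then finishes by local Gaussian elimination. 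The paper's argument is shorter and more conceptual but imports the fibrewise flatness criterion from EGA~IV; yours is more elementary, stays within basic linear algebra and dimension theory, and has the small advantage of yielding the local splitting of $\mathcal{E}$ and $\mathcal{F}$ explicitly. One point you might make more explicit: Chevalley's upper semi-continuity is on the source $I$, and you are implicitly pulling it back along the zero section $S\to I$ to get a function on $S$; this is harmless since $I_s$ is a linear space, but worth a sentence.
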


\begin{proof}
$1)\implies 2)$\\
We denote by $\cK$ the sheaf of sections of $K$. Because $E$ and $F$ are both of finite
presentation, Proposition~\ref{finitepresentation} tells us that $K$ is of finite
presentation, then we can apply Proposition~\ref{subflat} to say that $K$ is a subbundle
of $E$. Let us write the following exact sequence:
$$0\rightarrow \cK \rightarrow \E \rightarrow \cQ \rightarrow 0$$ and let us denote $Y\defeq \Specr(\Sym(\mathcal{Q}^\vee))$. 
Doing the same proof as in Proposition~\ref{subflat}, we see that $\mathcal{Q}$ is locally free of finite rank, and $Y$ is the quotient of $E$ by $K$ in the category
of fppf sheaves on $S$. Then $I=Y=E/K$ is representable by an $S$-scheme of finite type, given by~$I=\Specr(\Sym(\mathcal{Q}^\vee))$.
\tikzset{
  closed/.style = {decoration = {markings, mark = at position 0.5 with { \node[transform shape, xscale = .8, yscale=.4] {/}; } }, postaction = {decorate} },
  open/.style = {decoration = {markings, mark = at position 0.5 with { \node[transform shape, scale = .7] {$\circ$}; } }, postaction = {decorate} }} Because $\cQ$ is the cokernel of the injection $\cK\hookrightarrow \E$, we can write $\mathcal{Q} \hookrightarrow \cF$ so we get a surjection $\cF^\vee \twoheadrightarrow \mathcal{Q}^\vee$ hence \hspace*{-2 mm}\begin{tikzcd}[column sep=5mm] I \arrow[hook, closed]{r}{} & F\end{tikzcd} is a closed immersion. But because $I$ factorises $f$, by definition of the schematic image, we have $I\simeq \imsc(f)$. 
\\
$2) \implies 1)$\\
Let us suppose $I$ is representable by an $S$-scheme of finite presentation. In order to prove that $K\rightarrow S$ is flat, it is sufficient to prove that $E\rightarrow I$ is flat. Let $s\in S$. Then $I_s$ is the image of $E_s\rightarrow F_s$ and $K_s$ is its kernel. Because the formation of the kernel and of the image commutes with base change, we have an isomorphism of fppf sheaves
$$E_s/K_s \isor I_s$$ then $E_s\rightarrow I_s$  is flat. Then, using the "critère de platitude par fibres" (see~\cite{EGA4}, troisième partie, théorème 11.3.10), we obtain that $E\rightarrow I$ is flat. Moreover, the morphism $E\rightarrow I$ is surjective in the topological sense because it is surjective as a morphism of fppf sheaves, then $I\rightarrow S$ is flat.

Let us suppose now that that these conditions are satisfied. 
Then looking at the proof of $1) \implies 2)$, we see that $K$ and $E/K$ are vector bundles
on $S$. Using this same proof, we see that~$I$ is also a subbundle of $F$, and that $E/K \simeq I \simeq \imsc(f)$. 
The first part of \rm{(iii)} is true because $K$ is a subbundle of $E$ and $I$ is a subbundle of $F$.
Then for the last assertion, we have to say that the formation of $K$ commutes with base change because it is
a kernel, then because $K$ and $\cK$ determine each other, we see that $\cK$ commutes with base
change. Finally, $I$ commutes with base change because it is a quotient, and then
$\mathcal{I}$ commutes with base change because it is determined by~$I$. 
\end{proof}

\subsection{Lie algebra vector bundles}

In the following, $\cL$ is a Lie $\mathcal{O}_S$-algebra 
locally free of finite rank, whose bracket is denoted by
$\cro$. We denote by $L\defeq
\Specr(\Sym(\cL^\vee))$ the associated vector bundle, and
$[\cdot,\cdot]:L\times L\rightarrow L$ the morphism of schemes we
deduce from the bracket of $\cL$, inducing a Lie $S$-algebra
structure on $L$. We call these kinds of objects \textit{Lie algebra
vector bundles}. 

We denote
by $\cEnd(\cL)$ the $\mathcal{O}_S$-module of
$\mathcal{O}_S$-endomorphisms of $\cL$, and $ad : \cL  \longrightarrow \cEnd(\cL)$. We denote by $\ad:L\rightarrow \End(L)= \Specr(\Sym(\cEnd(\cL)^\vee))$ the corresponding morphism of schemes, and for the end, we denote by $Z(L)\defeq \ker(\ad: L\rightarrow \End(L))$ the \textit{center} of $L$.


\begin{remark*}\label{R3}
By definition, the formation of $\ad$ and $\End(L)$
commutes with base change, and because $\cL$ is
locally free of finite rank, the formation of
$\cEnd(\cL)$ does too.
\end{remark*}

\begin{proposition}\label{C2}
The center $Z(L)$ of a Lie algebra vector bundle is of finite presentation.
\end{proposition}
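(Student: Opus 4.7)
The proposition is an immediate consequence of Proposition~\ref{finitepresentation} once we verify the hypotheses, so my plan is very short.

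First I would recall that, since $\cL$ is locally free of finite rank, the sheaf $\cEnd(\cL) \simeq \cL^\vee \otimes_{\mathcal{O}_S} \cL$ is also locally free of finite rank. Therefore both $L = \Specr(\Sym(\cL^\vee))$ and $\End(L) = \Specr(\Sym(\cEnd(\cL)^\vee))$ are vector bundles in the sense of Definition~\ref{N1}. In particular, by Proposition~\ref{flatifflocfree}~1, both are of finite presentation over $S$, and \emph{a fortiori} of finite type.

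Next I would observe that the morphism $\ad : L \to \End(L)$ is a morphism of generalized vector bundles: it is the scheme morphism associated to the $\mathcal{O}_S$-linear map $\cL \to \cEnd(\cL)$, $x \mapsto [x,\cdot]$, hence it is $\mathbb{O}_S$-linear and thus a morphism in the category of $\mathbb{O}_S$-modules appearing in Definition~\ref{N1}. By definition of the center we have $Z(L) = \ker(\ad)$, viewed as the fppf kernel sheaf as in Definition~\ref{D7}.

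Finally, applying Proposition~\ref{finitepresentation} with $E_1 = L$ (of finite presentation) and $E_2 = \End(L)$ (of finite type), we conclude that $Z(L) = \ker(\ad)$ is of finite presentation over $S$. There is essentially no obstacle in this argument; the only point requiring care is the observation that $\ad$ really is a morphism of $\mathbb{O}_S$-modules (and not merely of schemes), which however follows at once from the $\mathcal{O}_S$-linearity of the sheaf-level map $\cL \to \cEnd(\cL)$.
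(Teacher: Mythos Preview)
Your proposal is correct and follows exactly the same approach as the paper, which simply states that the result is straightforward from Proposition~\ref{finitepresentation}. You have merely spelled out the verification of the hypotheses (that $L$ is of finite presentation, $\End(L)$ is of finite type, and $\ad$ is a morphism of $\mathbb{O}_S$-modules), which the paper leaves implicit.
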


\begin{proof}
This is straightforward from Proposition~\ref{finitepresentation}.
\end{proof}
Then we see that we are in good conditions for using Proposition~\ref{flatifflocfree} with the center of a Lie algebra vector bundle. 

Moreover, using Theorem~\ref{imagerpz} \rm{(iii)}, we know that if the center $Z(L)\rightarrow S$ is flat, then it is determined by its sheaf of Zariski sections, which is given by $$\mathcal{Z}(\cL)\defeq \ker(ad:\cL\rightarrow \cEnd(\cL)),$$ and so we have $$Z(L)=\Specr(\Sym(\cZ(\cL)^\vee)).$$

\begin{counterexample}
The hypothesis "$Z(L)\rightarrow S$ flat" is essential. Here is a counter-example:
let $R$ be a ring and $L$ be the Lie $R$-algebra with basis $\{x,y\}$ and
bracket defined by $[x,y]=ax$ for some $a\in R$ such that $a \nmid 0$,
that is, geometrically $L=\Spec(\Sym(Rx^*\oplus Ry^*))$.
Let $\Spec(R') \rightarrow \Spec(R)$ be an open immersion. Hence $R'$ is a flat $R$-algebra. Hence $a\nmid 0$ in $R'$. So using the previous notation, we have $\cZ(\cL)=0$.
But whenever $R'$ is a $R/(a)$-algebra, we have $Z(L)(R')=L(R')$. Hence the $Z(L)$ and its sheaf of Zariski sections do not determine each other.
\end{counterexample}

\begin{definition}\label{D13}
Let $\mathcal{L}$ be an $\mathcal{O}_S$-module in Lie algebras. We define its
\textit{derived Lie algebra} as the image sheaf of $[\cdot, \cdot]: \cL \otimes \cL \rightarrow \cL$.

Let $L\rightarrow S$ be a Lie algebra generalised vector bundle. We define its \textit{derived Lie algebra} $L'$ as the fppf image sheaf of $\cro:L\otimes L\rightarrow L$. 
\end{definition}
In general, the derived Lie algebra is not representable. In fact, 
Theorem \ref{imagerpz} tells us that it is representable if and only 
if the kernel of the bracket is flat. Moreover, in this situation, we 
have $L'=\Spec(\Sym(\cL^\vee))$.

\section{The scheme of Lie \texorpdfstring{$p$}{p}-algebra structures}

\subsection{The functor of \texorpdfstring{$p$}{p}-mappings and the restrictable locus} \label{results}

From now on, $S$ is a scheme of characteristic $p>0$, and we globalize the definition of a $p$-mapping from Lie algebras to a definition on Lie algebra vector bundles as follows.

\begin{definition}\label{D8}
Let $L$ be a Lie algebra generalised vector bundle. We say that a morphism of schemes $(\cdot)\p:L\rightarrow L$ is a \textit{$p$-mapping} on $L$ if for all $S$-schemes $T$, it is a $p$-mapping on $L(T)$.
\end{definition} 

\begin{definition}\label{D10}
Let $X\rightarrow S$ be an $S$-scheme, and let $E$ be a generalised vector bundle over $S$. We say  that $X$ is a \textit{formally homogeneous space under $E$} if $E$ acts on $X$ such that the action map
\begin{align*}
    E\times_S X &\rightarrow X\times_S X\\
    (e,x)&\mapsto (e\cdot x,x)
\end{align*}
is a scheme isomorphism. \\ 
Moreover, if $E$ is a vector bundle, we say that $X$ is a \textit{formally affine space under $E$}.
Moreover, if $X\rightarrow S$ has local sections, i.e. if $X\rightarrow S$ is a sheaf epimorphism for the fppf topology, we say that $X$ is an \textit{affine space under $E$}.
\end{definition}

\begin{remark*}\label{R6}
One can show that the second condition is equivalent to have local sections for the étale or for the Zariski topology. This is because $H^1_{\text{fppf}}(S,E)\simeq H^1_{\text{Zar}}(S,E)$. See Milnes's book on étale cohomology \cite{M80}, Chapter~III, §3, Proposition 3.7.
\end{remark*}


\textbf{Notations: }Let $X$ be a scheme of characteristic $p>0$. We
denote by $\Frob_X:X\rightarrow X$ or simply $\Frob$ the absolute
Frobenius morphism of the scheme $X$.

Let $L\rightarrow S$ be a Lie algebra vector bundle. Let us denote by $\rE$ the generalised vector bundle of Frobenius-semilinear morphisms between
$L$ and $Z(L)$: $$\rE\defeq
\Hom_{\Frob}(L,Z(L))=\Hom(\Frob_S^*L,Z(L))=(\Frob_S^*L)^\vee \otimes
Z(L)$$ where the tensor product is taken in the category of vector
bundles over $S$. If $Z(L)$ is a vector bundle, then so is $\rE$.



\begin{theorem}\label{Xrpz}
Let $L\rightarrow S$ be a Lie algebra vector bundle.
Let us define a set-valued functor as follows:
\begin{align*}X:
\{S\text{-schemes}\} & \longrightarrow  \Set \\
T & \longmapsto \big\{p\text{-mappings on }L\times_S T \big\}. 
\end{align*}
Then, $X$ is representable by an affine scheme, and is a formally homogeneous space under $\rE$.  
\end{theorem}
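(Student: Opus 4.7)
The plan is to realize $X$ as a closed subscheme of a suitable vector bundle of polynomial maps $L\to L$, and then to promote the pointwise bijection of Corollary~\ref{centerlessunique} into an isomorphism of representable functors via Yoneda.

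For representability, observe first that axiom (AL2), $(\lambda x)\p=\lambda^p x\p$, forces any $p$-mapping $f:L\to L$ to be $\mathbb{G}_m$-equivariant for the weight-$p$ scaling action, i.e., a homogeneous polynomial morphism of degree~$p$. Via the standard identification $\mathrm{Mor}_S(L,L)\simeq \Hom_{\mathcal{O}_S}(\mathcal{L}^\vee,\Sym\,\mathcal{L}^\vee)$, such morphisms correspond to $\mathcal{O}_S$-linear maps $\mathcal{L}^\vee\to\Sym^p\mathcal{L}^\vee$; they form a vector bundle $W$ over $S$ whose sheaf of sections is $\mathcal{L}\otimes\Sym^p\mathcal{L}^\vee$. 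Axioms (AL1) and (AL3) then cut out a closed subscheme $X\subset W$: each asserts the equality of two morphisms of $S$-affine schemes of finite presentation, namely $x\mapsto\ad_{f(x)}$ versus $x\mapsto(\ad_x)^p$ as maps $L\to\End(L)$ for (AL1), and an analogous identity of maps $L\times_S L\to L$ for (AL3), both built polynomially from the universal degree-$p$ map, the bracket, and the adjoint. The equalizer of two morphisms into a finitely-presented affine $S$-scheme is a closed subscheme of the source, so $X$ is a closed subscheme of $W$ and hence an affine $S$-scheme.

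For the formally homogeneous space structure, I would apply Corollary~\ref{centerlessunique} pointwise: for every $S$-scheme $T$, the map $\rE(T)\times X(T)\to X(T)\times X(T)$ sending $(\phi,\gamma)\mapsto(\phi+\gamma,\gamma)$ is a bijection with inverse $(\gamma_1,\gamma_2)\mapsto(\gamma_1-\gamma_2,\gamma_2)$, the difference being Frobenius-semi-linear with values in $Z(L_T)$. Naturality in $T$ together with representability of both sides then forces, by Yoneda, the action map $\rE\times_S X\to X\times_S X$ to be a scheme isomorphism. Note that no nonemptiness of $X$ is required: if $X(T)$ is empty then both sides vanish and the statement is vacuous.

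The main technical subtlety will lie in the representability step: one must articulate carefully that (AL1) and (AL3), which are universally quantified over test-scheme points $x,y$, translate into honest closed conditions on the coefficients of the degree-$p$ map parametrized by~$W$. The clean way is to work over the universal base $W$, interpret (AL1) and (AL3) as equalities of scheme morphisms over $W\times_S L$ and $W\times_S L\times_S L$ respectively into affine targets of finite presentation, and invoke that such equalizers are closed subschemes of the source.
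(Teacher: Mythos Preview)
Your argument for the formally homogeneous space structure is exactly the paper's. Your representability argument, however, takes a different route: you embed $X$ into the vector bundle $W$ of homogeneous degree-$p$ polynomial maps $L\to L$ and then impose (AL1) and (AL3) as closed conditions. The paper instead works locally on $S$, fixes a basis $x_1,\dots,x_n$ of $\mathcal{L}$, and invokes Jacobson's Theorem~\ref{Jacobson} to identify $X$ with the fiber product $\prod_{i=1}^n\bigl(L\times_{\End(L),\,\ad,\,(\ad_{x_i})^p} S\bigr)$, the scheme of tuples $(y_1,\dots,y_n)\in L^n$ with $\ad_{y_i}=(\ad_{x_i})^p$. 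The paper's approach is more economical: Jacobson absorbs (AL2) and (AL3) entirely, so only (AL1) on basis vectors must be imposed, and the ambient scheme has relative dimension $n^2$ rather than $n\cdot\binom{n+p-1}{p}$. Your approach is basis-free and conceptually direct, which is a genuine advantage.

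One point in your argument needs sharpening. You write that the equalizer of two morphisms into an affine target is closed in the source, and conclude that $X$ is closed in $W$; but the source of the two morphisms encoding (AL1) is $W\times_S L$, not $W$, so the equalizer is closed in $W\times_S L$. What you actually need is the locus in $W$ over which this equalizer is \emph{all} of the fiber. This follows because both $x\mapsto\ad_{f(x)}$ and $x\mapsto(\ad_x)^p$ are homogeneous of degree~$p$ in the $L$-coordinates with coefficients in $\mathcal{O}_W$, hence define two sections over $W$ of the finite-rank bundle $\cEnd(\mathcal{L})\otimes\Sym^p\mathcal{L}^\vee$, and the locus where these sections agree is closed in $W$; the same reasoning handles (AL3). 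Alternatively, apply the Weil-restriction statement of Theorem~\ref{T6} to the closed equalizer inside the essentially free $W$-scheme $W\times_S L$. Either patch completes your proof.
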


\begin{proof}
Let $\mathcal{L}$ be the Zariski sheaf of sections of $L\rightarrow S$.
Let us show that $X$ is representable. Because the claim is local on the
target, we can suppose $S=\Spec(R)$ affine, small enough so that $\cL$ is free
with basis $x_1,\dots,x_n$ on $S$, i.e. $$\cL=\mathcal{O}_Sx_1\oplus \dots
\oplus \mathcal{O}_Sx_n\text{ , } x_i\in \cL(S)=L(S) \text{ and }
L=\Spec(\mathcal{O}_S[x_1^*,\dots,x_n^*]).$$
Let us define for all $i$ the following morphism:
$$ f_i: S\xrightarrow{x_i}L\xrightarrow{\ad}\End(L)\xrightarrow{\rp} \End(L)$$
where $\rp: \End(L)\rightarrow \End(L)$ maps an endomorphism to its
$p$-power.
Then, by definition of the fiber product, for all $T \rightarrow S$ and $i\in
\{1,\dots,n\}$, we have $$(L\times_{\End(L),(\ad,f_i)} S)(T)=\{y\in L(T),
\ad_y=(\ad_{x_i})^p_{\mid T}\}.$$
Then, by Jacobson's Theorem~\ref{Jacobson}, the map
\begin{align*}
X &\rightarrow (L\times_{\End(L);(\ad,f_1)}S) \times (L\times_{\End(L);(\ad,f_2)} S) \times \dots \times (L\times_{\End(L);(\ad,f_n)} S)\\
\gamma &\mapsto (\gamma(x_1),\dots,\gamma(x_n))
\end{align*}
is an isomorphism. This shows that $X$ is representable.
Let us show now that $X$ is a formally homogeneous space under $\rE$. Let
$T\rightarrow S$ be an $S$-scheme. We can write
$$\rE(T)=\Hom_{\mathbb{G}_{a,T}-mod}(\Frob^*L\times T ,Z(L)\times_S
T)=\Hom_{\mathbb{G}_{a,T}-mod}(\Frob^*L\times_S T,Z(L\times_S T))$$ $$\text{
and }X(T)=\big\{p\text{-structures on }L\times_S T\big\}.$$ 
    Then the morphism 
\begin{align*}
    \rE\times_S X &\rightarrow X\times_S X\\
    (\phi, \gamma)& \mapsto (\phi+\gamma, \gamma)
\end{align*}
is well-defined and is an isomorphism thanks to Corollary~\ref{centerlessunique}.
\end{proof}

\begin{remark}
If we suppose moreover that $Z(L)\rightarrow S$ flat, then $\rE$ is a
vector bundle, so $X$ is a formally affine space under $\rE$.
\end{remark}

For the next theorem, we recall that a scheme $X\rightarrow S$ is said to be
\textit{essentially free} if we can find a cover of $S$ by affine opens $S_i$,
and for all $i$ an affine and faithfully flat $S_i$-scheme $S'_i$, and a cover
$(X'_{i,j})_j$ of $X'_i\defeq X\times_S S'_i$ by affine opens $X'_{ij}$ such
that for all $(i,j)$, the ring of functions of $X'_{ij}$ is a free module on the ring of
$S'_i$.
\begin{theorem}\label{T6} Let $S$ be a scheme. Let $Z\rightarrow S$ be essentially free and let  \tikzset{closed/.style = {decoration =
{markings, mark = at position 0.5 with { \node[transform shape, xscale = .8,
yscale=.4] {/}; } }, postaction = {decorate} }}
\hspace{-2mm}\begin{tikzcd}[column sep=5mm] Y \arrow[hook, closed]{r}{} & Z\end{tikzcd} be a closed subscheme of $Z$. Then, the Weil restriction defined by \begin{align*} \Pi_{Z/S}(Y):
\{S\text{-schemes}\} & \longrightarrow  \Set \\ 
T & \longmapsto   \begin{cases} 
\{\emptyset\}\text{ if } Z_T=Y_T\\ 
\hspace{0.22cm}\emptyset \text{ otherwise}
\end{cases}
\end{align*}
is representable by a closed subscheme of $S$.
\end{theorem}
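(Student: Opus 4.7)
The plan is to reduce the statement to a very concrete affine situation, then glue and descend. First I would handle the basic case where $S=\Spec(R)$, $Z=\Spec(A)$ with $A$ a free $R$-module of basis $(e_\alpha)_\alpha$, and $Y$ is cut out by an ideal $I\subset A$. For any $R$-algebra $B$, the equality $Y_B=Z_B$ as closed subschemes of $Z_B$ means precisely that the image of $I$ in $A\otimes_R B=\bigoplus_\alpha B\cdot e_\alpha$ is zero. Writing each $a\in I$ as $a=\sum_\alpha r_{a,\alpha}\,e_\alpha$, this is equivalent to the vanishing of every coefficient $r_{a,\alpha}$ in $B$. Hence $\Pi_{Z/S}(Y)$ is represented by $V(J)\subset S$, where $J\subset R$ is the ideal generated by all the $r_{a,\alpha}$ with $a$ running through a generating set of $I$.

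Next, still assuming $S$ affine but allowing $Z$ to be only covered by affine opens $Z_\beta=\Spec(A_\beta)$ with each $A_\beta$ free over $R$, I would use that the condition $Y_T=Z_T$ is Zariski local on $Z$: it holds if and only if $(Y\cap Z_\beta)_T=(Z_\beta)_T$ for every $\beta$. Hence $\Pi_{Z/S}(Y)=\bigcap_\beta \Pi_{Z_\beta/S}(Y\cap Z_\beta)$, an arbitrary intersection of closed subschemes of $S$, which is again closed (represented by the vanishing of the sum of the defining ideals).

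The general case then follows by descent. Covering $S$ by affine opens $S_i$ and taking the faithfully flat affine $S'_i\to S_i$ together with the affine-free open cover of $Z\times_S S'_i$ furnished by the essentially free hypothesis, the previous step produces a closed subscheme $W'_i\subset S'_i$ that represents the base change $\Pi_{Z/S}(Y)\times_S S'_i$ (using that, as a subfunctor of $S$, $\Pi_{Z/S}(Y)$ commutes with base change in the obvious way). Since $\Pi_{Z/S}(Y)$ is a subfunctor of $S$, it is automatically an fpqc sheaf, so $W'_i$ inherits a canonical descent datum relative to $S'_i\to S_i$; fpqc descent for closed immersions then yields a closed subscheme $W_i\subset S_i$, and these glue along the Zariski cover $(S_i)$ to the desired closed subscheme of $S$. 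The main obstacle I expect is precisely this descent step: one has to verify that the $W'_i$ carry the canonical descent datum (which boils down to functoriality of the construction in Step~1 and the sheaf property of $\Pi_{Z/S}(Y)$), but once this is done the conclusion is formal since closed immersions satisfy fpqc descent.
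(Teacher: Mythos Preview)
Your argument is correct and is in fact the standard proof of this result: reduce to the free affine case by writing elements of the defining ideal in a basis and extracting their coefficient ideal, then intersect over an affine open cover of $Z$, and finally descend along the faithfully flat $S'_i\to S_i$ using that $\Pi_{Z/S}(Y)$ is an fpqc sheaf (the key point being that a closed immersion which becomes an isomorphism after an fpqc base change was already an isomorphism) and that closed immersions satisfy fpqc descent. One small remark: in your Step~2 you should perhaps note explicitly that an arbitrary intersection of closed subschemes of an affine scheme is the closed subscheme cut out by the sum of the ideals, so there is no set-theoretic issue.

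The paper itself does not give a proof: it simply cites SGA3, Tome~2, expos\'e~VIII, Th\'eor\`eme~6.4. Your sketch is essentially the argument one finds there, so there is no genuine divergence of method---you have just written out what the paper invokes as a black box.
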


\begin{proof}
See \cite{SGA3} Tome 2, exposé VIII, Théorème 6.4. 
\end{proof}

\begin{lemma}\label{L3}
Let $$0\rightarrow K \rightarrow E\xrightarrow{\pi}  F \rightarrow 0$$
be an exact sequence of vector bundles (i.e. seen as fppf sheaves) on a scheme $S$. Then, $\pi$ is surjective, Zariski-locally on S. 
\end{lemma}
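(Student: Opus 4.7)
The plan is to exploit that $\mathcal{F}$, the sheaf of sections of $F$, is locally free of finite rank, which will allow the short exact sequence of sheaves of modules to split Zariski-locally. A splitting will immediately give the required local sections of $\pi$.

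First, I would reduce to the case where $S = \Spec(A)$ is affine and $\mathcal{F}$ is a free $A$-module; this is possible since $\mathcal{F}$ is locally free of finite rank by Proposition~\ref{flatifflocfree}, so $S$ can be covered by such affine opens. Working on such a piece, I consider the short exact sequence of $A$-modules
\[
0\rightarrow \mathcal{K}\rightarrow \mathcal{E}\rightarrow \mathcal{F}\rightarrow 0
\]
obtained by taking Zariski sections, where $\mathcal{K}$ denotes the sheaf of sections of $K$ (which is also locally free by the hypothesis that we have an exact sequence of vector bundles, or equivalently by Theorem~\ref{imagerpz}).

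Second, since $\mathcal{F}$ is free, in particular projective, the sequence above splits as a sequence of $A$-modules. This gives an $A$-linear section $s:\mathcal{F}\rightarrow \mathcal{E}$ with $\pi\circ s = \id_{\mathcal{F}}$. Passing back to the covariant description of vector bundles, this section corresponds to a morphism of $S$-schemes $s:F\rightarrow E$ such that $\pi \circ s = \id_F$. Therefore, for every $S$-scheme $T$, the map $\pi(T):E(T)\rightarrow F(T)$ is surjective, since any $y\in F(T)$ is the image of $s_T(y)\in E(T)$. Gathering this over a Zariski cover of $S$ on which $\mathcal{F}$ is free gives the statement.

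There is no real obstacle here: the argument is entirely driven by the projectivity of locally free modules, which upgrades fppf surjectivity (automatic from exactness as fppf sheaves) to Zariski-local surjectivity with an honest section. The only point of care is to ensure that we are genuinely working with the sheaves of Zariski sections and that the covariant/contravariant correspondence between vector bundles and their sheaves of sections is being applied consistently, as fixed in Definition~\ref{N1}.
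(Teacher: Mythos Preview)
Your proof is correct but follows a different route than the paper's. The paper argues pointwise on sections of $F$: given $f:S\to F$, the pullback $E\times_F S$ is an fppf $K$-torsor, and since $K$ is a vector bundle one has $H^1_{\text{fppf}}(S,K)=H^1_{\text{Zar}}(S,K)$, so this torsor is Zariski-locally trivial, yielding a local lift of $f$. Your approach is more elementary: instead of invoking cohomology, you use the projectivity of $\mathcal{F}$ to split the sequence of sheaves of sections, producing an honest section $s:F\to E$ of $\pi$ over each affine on which $\mathcal{F}$ is free --- which is in fact stronger than lifting individual sections one at a time.

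One point deserves to be made more explicit. When you write ``the short exact sequence of $A$-modules obtained by taking Zariski sections'', the surjectivity of $\mathcal{E}\to\mathcal{F}$ is not automatic from fppf-exactness of $E\to F$; indeed, this surjectivity is essentially what the lemma asserts. What actually supplies it is Theorem~\ref{imagerpz}(iii): since $K\to S$ is flat, the sheaf of sections of the fppf image $I$ equals $\im(\mathcal{E}\to\mathcal{F})$, and here $I=F$, so $\mathcal{E}\to\mathcal{F}$ is onto. You do cite Theorem~\ref{imagerpz}, but only for $\mathcal{K}$ being locally free; it is worth saying that the same theorem is doing the real work of passing from fppf-exactness to exactness of the module sequence.
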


\begin{proof}
By hypothesis, $E\rightarrow F$ is a $K_F$-torsor for the fppf topology. 
Let $f:S\rightarrow F$ a section on~$F$. Let $E\times_F S$ be the fiber product made with the section $f$. Then by base change, $E\times_F S$ is a $K_F\times_F S$-torsor for the fppf topology. 
But $K_F\times_F S=K$ and
$$\Hache^1_{fppf}(S,K)=\Hache^1_{Zar}(S,K)$$ because $K$ is a vector
bundle over $S$ (see \cite{M80} for more details). 
Then $E\times_F S$ is a $K$-torsor over $S$, for the Zariski topology. 
Then there exist open immersions $g:S'\rightarrow S$ and $h:S'\rightarrow E\times_F S$ such that this diagram commutes:
$$\xymatrix{ & S' \ar[d]^g \ar[ld]_h\\ E\times_F S \ar[d]_{\pr_1} \ar[r] & S \ar[d]^f\\ E \ar[r]_\pi & F }.$$
Then the Zariski section we are looking for is given by
$\pr_1\circ h: S'\rightarrow E$.
\end{proof}

\begin{theorem}\label{Yrpz}
Let $L\rightarrow S$ be a Lie algebra vector bundle whose center $Z(L)\rightarrow S$ is flat. Let us recall the notation $\rE\defeq \Hom_{\Frob}(L,Z(L))$. Let~$X\rightarrow S$ be the functor of $p$-mappings on~$L$ defined above, and let $S^{\res}=S^{\res}(L)$ be defined as: 
\begin{align*}S^{\res}:
\{S\text{-schemes}\} & \longrightarrow  \Set \\ 
T  &\longmapsto
\begin{cases} 
\{\emptyset\}\text{ if } L_T\text{ is Zar-loc. restrictable over }T \\ 
\hspace{0.22cm}\emptyset \text{ otherwise}.
\end{cases} 
\end{align*} 
Then the following two conditions are verified:
\begin{enumerate}
\item[{\rm 1.}] $S^{\res}$ is representable by a closed subscheme of $S$.
\item[{\rm 2.}] $X\rightarrow S$ factors through $S^{\res}$ and $X\rightarrow S^{\res}$ is an affine space under the vector bundle $\rE\times_S S^{\res}$. 
\end{enumerate}
\end{theorem}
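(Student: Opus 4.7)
The plan is to prove part~1 by a Zariski-local construction based on Jacobson's criterion, and then derive part~2 from Theorem~\ref{Xrpz} together with this construction. For part~1, I would work Zariski-locally on $S$ and reduce to the case $S=\Spec(R)$ affine with $L$ free with basis $x_1,\dots,x_n$. The flatness of $Z(L)\to S$ ensures via Proposition~\ref{subflat} that $Z(L)$ is a subbundle of $L$, and then Theorem~\ref{imagerpz} identifies $\ad(L)$ with the subbundle $L/Z(L)$ of $\End(L)$, which is in particular a closed subscheme. Let $f_i:S\to \End(L)$ be the morphism $(\ad_{x_i})^p$ already used in the proof of Theorem~\ref{Xrpz}, and set $S_i\defeq f_i^{-1}(\ad(L))$, a closed subscheme of $S$, and $S^{\res}\defeq \bigcap_{i=1}^n S_i$. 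Showing that $S^{\res}$ represents the prescribed functor is the heart of the matter: if $L_T$ is Zariski-locally restrictable, Jacobson's condition Zariski-locally provides $y_i$'s with $\ad_{y_i}=(\ad_{x_i})^p$, so $f_{i,T}$ factors through $\ad(L)_T$ Zariski-locally and therefore globally (being a closed condition), i.e.\ $T\to S$ factors through each $S_i$. Conversely, if $T\to S$ factors through $S^{\res}$, each $f_{i,T}$ factors through the closed subscheme $\ad(L)_T\simeq L_T/Z(L_T)$; Lemma~\ref{L3} applied to $0\to Z(L)\to L\to L/Z(L)\to 0$ then yields Zariski-local sections of $L\to L/Z(L)$, producing $y_i$'s in $L(T)$ Zariski-locally, and Jacobson's Theorem~\ref{Jacobson} delivers a $p$-mapping on $L_T$ Zariski-locally. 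Since restrictability is intrinsic, the local closed subschemes obtained from different bases agree and glue to a global closed subscheme $S^{\res}\hookrightarrow S$.

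For part~2, the factorization $X\to S$ through $S^{\res}$ is tautological: a $p$-mapping on $L_T$ certifies its restrictability. Theorem~\ref{Xrpz}, together with the remark that $\rE$ is a vector bundle when $Z(L)$ is flat, shows that $X\to S^{\res}$ is a formally affine space under $\rE\times_S S^{\res}$. To upgrade this to an affine space in the sense of Definition~\ref{D10}, one only needs Zariski-local sections of $X\to S^{\res}$; these are supplied by the same construction as in part~1, which over each open piece of $S^{\res}$ produces a $p$-mapping via Jacobson's theorem.

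The main obstacle is the step in part~1 translating the existential condition of Jacobson's criterion (``there exists $y_i$ with $\ad_{y_i}=(\ad_{x_i})^p$'') into the closed subscheme condition that $f_i$ factors through $\ad(L)$. The flatness of $Z(L)$ enters twice in this translation: first to ensure $\ad(L)$ is a closed subscheme of $\End(L)$, so that the factorization of $f_i$ becomes a closed condition on $S$; and second to make $L\to L/Z(L)$ a Zariski-local epimorphism via Lemma~\ref{L3}, so that the closed condition is equivalent to the actual Zariski-local existence of lifts $y_i$.
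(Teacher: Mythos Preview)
Your proof is correct and essentially complete, but your construction of $S^{\res}$ in part~1 differs from the paper's. You work Zariski-locally with a chosen basis and define $S^{\res}$ explicitly as the intersection $\bigcap_i f_i^{-1}(\ad(L))$, using that $\ad(L)\hookrightarrow\End(L)$ is closed (via Theorem~\ref{imagerpz}) to get closedness directly, and then glue by showing the resulting closed subscheme represents an intrinsic functor. The paper instead introduces a basis-free auxiliary functor $W$, the locus where $I=\ad(L)$ is stable under the $p$-th power map $\rho:I\to\End(L)$, and invokes Weil restriction (Theorem~\ref{T6}) applied to the closed subscheme $\rho^{-1}(I)\hookrightarrow I$ to obtain closedness of $W$; only afterwards does it identify $W$ with $S^{\res}$, using in one direction that $Z(L)$ is a $p$-ideal so the $p$-mapping descends to $I=L/Z(L)$, and in the other direction the same Lemma~\ref{L3} plus Jacobson argument you use. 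Your route is more elementary in that it avoids the Weil restriction machinery, at the cost of a choice of basis and a gluing step; the paper's route is more conceptual and makes the closedness manifestly global from the start. For part~2 the two arguments coincide.
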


\begin{remark}
\begin{itemize}
    \item[-] The functor $S^{\res}$ could have been defined as the unique sub-functor
    of $S$ such that
$$L_T\text{ is Zar-loc. restrictable} \Leftrightarrow T\rightarrow S \text{ can be
factorized by }S^{\res}.$$
Indeed $T\rightarrow S$ can be factorized by $S^{\res}$ if and only if $S^{\res}(T)\neq
\emptyset$ if and only if $L_T$ is Zar-loc. restrictable.
Let $F$ be a subfunctor of $S$ such that $L_T$ is Zar-loc. restrictable if and only if
$T\rightarrow S$ can be factorized by $F$. But for all $T$, $F(T)\subset
\Hom_S(T,S)=\{\ast\}$. Hence by definition, $S^{\res}$ is the only subfunctor of
$S$ satisfying the property above. 
    \item[-] We could have defined $S^{\res}$ to be the locus where a Lie algebra is fppf-loc. restrictable, because this is less restrictive, but the following results will show that those conditions are the same.
    \item[-] By Yoneda, we can see $X(X)=\Hom_S(X,X)\neq \emptyset$
    because $\id\in X(X)$. But by definition, $\id \in X(X)$
    corresponds to a $p$-mapping on $L_{X}$. Then $L_{X}$ is Zar-loc. restrictable
    and we call this mapping the \textit{universal $p$-mapping on $L_{X}$}. 
\end{itemize}
\end{remark}



\begin{proof} 1. Let $I$ be the image of $\ad$. Because $Z(L)$ is flat, $I$ is
a subbundle of $\End(L)$ by Theorem~\ref{imagerpz} \rm{(i)}. Let~$\rho: I
\rightarrow \End(L)$ be the $p$-th power map, restricted to $I$. Let $W=W(L)$ be
the subfunctor of $S$ defined by:

\begin{align*}W:
\{S\text{-schemes}\}& \longrightarrow \Set \\ 
T & \longmapsto 
\begin{cases} 
\{\emptyset\}\text{ if } I_T\text{ is stable by }\rho \\ 
\hspace{0.22cm}\emptyset \text{ otherwise}.
\end{cases}
\end{align*} 

Let us show that $W$ is representable by a closed subscheme of $S$. Let
$T\rightarrow S$ be an $S$-scheme. Then, $I_T$ is $\rho$-stable if and only if
$\rho^{-1}(I_T)\isor I_T$. But \tikzset{closed/.style = {decoration =
{markings, mark = at position 0.5 with { \node[transform shape, xscale = .8,
yscale=.4] {/}; } }, postaction = {decorate} }}
\begin{tikzcd}[column sep=5mm] I \arrow[hook, closed]{r}{} & \End(L)\end{tikzcd} is closed thanks to Theorem
\ref{imagerpz} \rm{(ii)}, and closed immersion are stable by base change. Then
$\rho^{-1}(I)$ is a closed subscheme of $I$. We know that $I\rightarrow S$ is
essentially free because it is a vector bundle, then using Theorem~\ref{T6}
with \tikzset{closed/.style = {decoration = {markings, mark = at position 0.5
with {\node[transform shape, xscale = .8, yscale=.4] {/}; } }, postaction =
{decorate} }}\hspace*{-2mm}\begin{tikzcd}[column sep=5mm] \rho^{-1}(I) \arrow[hook, closed]{r}{} & I\end{tikzcd}, we see that $W$ is a
closed subscheme of $S$.
    
Let us now show that $W=S^{\res}$. First, let us show $S^{\res}\subset W$. Let
$T\rightarrow S$. If $S^{\res}(T)=\emptyset$, there is nothing to prove. Let us
suppose $S^{\res}(T)\neq\emptyset$. Then by definition $L_T$ is Zar-loc. restrictable over
$T$, hence there exists a $p$-mapping on $L_T$, locally on $T$ for the Zariski
topology. We denote this $p$-mapping by $\gamma$. We want to show that $I_T$
is stable by $\rho$. That means we want to show that there exists a map
$\sigma: I_T\rightarrow I_T$ such that the following diagram commutes:
  $$  \xymatrix{ I_T \ar[r]^{\rho_T \hspace{0.5cm}} \ar[rd]_{\sigma}  & \End(L_T)  \\ & I_T.
    \ar@{^{(}->}[u]_i }$$
    Thanks to Theorem~\ref{imagerpz}, we know that $I_T=L_T/Z(L_T)$. But
    $Z(L_T)\subset L_T$ is an ideal of $L_T$, and thanks to $(\AL1)$,
    it is
    stable by any $p$-mapping, so it is stable by $\gamma$. Then $\gamma$
    induces a
    $p$-mapping that we can write $\sigma: L_T/Z(L_T)\rightarrow L_T/Z(L_T)$
    by Proposition~\ref{pmapimage}. If we denote by $\pi: L_T\rightarrow
    L_T/Z(L_T)$ the quotient
    morphism, we have a commutative diagram, where $\rp:\End(L_T)\rightarrow
    \End(L_T)$ is the $p$-power:
    $$\xymatrix{L_T\ar[r]^{\pi \hspace{1.1cm}} \ar[d]^\gamma & L_T/Z(L_T)=I_T
    \ar[d]^\sigma \ar@{^{(}->}[r]^{ \hspace{0.6cm}i} & \End(L_T) \ar[d]^\rp\\
    L_T \ar[r]_{\pi \hspace{1.1cm}} & L_T/Z(L_T)=I_T \ar@{^{(}->}[r]_{
    \hspace{0.6cm}i} &\End(L_T)}$$ 
    The right-hand square of this diagram is commutative thanks to axiom
    $(\AL1)$. 
    As $\rho=p\circ i$, we can calculate $$ \rho \circ \pi = \rp\circ i\circ \pi
    = i \circ \sigma \circ \pi.$$ But $\pi$ is an epimorphism in the category
    of schemes, then we obtain $\rho = i \circ \sigma$, i.e $\rho$ factors via
    $i$ as we wanted. Then, $S^{\res}\subset W$.
   
    Conversely, let $T\rightarrow S$ be such that $I_T$ is stable by $\rho$. Let us show
    that $X(T)$ is nonempty, locally for the Zariski topology on $T$. As everything is
    local on $S$, and $Z_T$ is locally a direct factor in $L_T$, we can assume $T$ is
    affine, small enough such that $L=\Specr(\mathcal{O}_T[x_1^*,\dots,x_n^*])$. Then we
    have the exact sequence
    $$0\rightarrow Z(L) \rightarrow L \xrightarrow{\ad} I \rightarrow 0$$
    and Lemma~\ref{L3} says that $\ad$ is surjective, Zariski locally on $T$. Then, thanks
    to Jacobson's Theorem~\ref{Jacobson}, we know that we have existence of a $p$-mapping,
    Zariski locally on $T$.
    Then $S^{\res}=W$, so $S^{\res}$ is representable by a closed subscheme of $S$.
    
2. Let $T\rightarrow S$ be an $S$-scheme. Let $\gamma \in X(T)$. Then by definition, $L_T$ is restrictable so $S^{\res}(T)=\{\emptyset\}$. Then we define this map: 
\begin{align*}
    X(T)&\rightarrow S^{\res}(T)
    \\\gamma &\mapsto \{\emptyset \} 
\end{align*}
that factorizes $X\rightarrow S$.
    Thanks to Theorem~\ref{Xrpz}, and because $X\times_S X = X\times_{S^{\res}} X$, we know
    that $$\rE_{S^{\res}}\times_{S^{\res}} X\simeq X\times_{S^{\res}} X.$$
    We need to show that $X\rightarrow S^{\res}$ is a sheaf epimorphism for the fppf topology.
    It suffices to show that for all $T\rightarrow S$
    such that $L_T$ is Zar-loc. restrictable, we can find an fppf morphism $T'\rightarrow T$ such that
    there exists a $p$-mapping on $L_{T'}$. We just 
    have to take for $T'$ the Zariski covering on 
    which $L_T$ possesses a $p$-mapping. 
\end{proof}
\begin{corollary}\label{C3}
With the same hypothesis, if $Z(L)=\{0\}$, then $X\simeq S^{\res}$, so $X\rightarrow S$ is a closed immersion.
\end{corollary}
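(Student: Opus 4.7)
The plan is to derive this corollary directly from Theorem~\ref{Yrpz}, the statement being essentially a degenerate specialization: when the center vanishes, the affine space fibration $X\to S^{\res}$ has trivial structure group.

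First, I would invoke Theorem~\ref{Yrpz}. Its hypothesis $Z(L)\to S$ flat is satisfied trivially under $Z(L)=\{0\}$, since the zero vector bundle is flat over $S$. Theorem~\ref{Yrpz} then gives that $S^{\res}\hookrightarrow S$ is a closed subscheme, that $X\to S$ factors as $X\to S^{\res}\hookrightarrow S$, and that $X\to S^{\res}$ is an affine space under the pulled-back vector bundle $\rE\times_S S^{\res}$.

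Next I would observe that $\rE=\Hom(\Frob_S^*L,Z(L))=(\Frob_S^*L)^\vee\otimes Z(L)=0$ as soon as $Z(L)=0$. Hence the structure vector bundle acting on $X\to S^{\res}$ is the zero bundle. In particular, the isomorphism from the formally affine space structure reads
\begin{equation*}
X\times_{S^{\res}} X \;\simeq\; \rE\times_S X \;=\; 0\times_{S^{\res}} X \;=\; X,
\end{equation*}
which says that the diagonal $X\to X\times_{S^{\res}}X$ is an isomorphism, i.e.\ $X\to S^{\res}$ is a monomorphism. Combined with the fact that $X\to S^{\res}$ has local sections for the fppf topology (the ``affine space'' part of Theorem~\ref{Yrpz}), this monomorphism must be an isomorphism: any fppf-local section of a monomorphism is unique on overlaps and hence descends. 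Therefore $X\simeq S^{\res}$.

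Finally, composing with the closed immersion $S^{\res}\hookrightarrow S$ of Theorem~\ref{Yrpz}.1, the morphism $X\to S$ is a closed immersion, as claimed. There is no real obstacle here; the only point requiring a small check is the step that a formally affine space under the zero bundle with fppf-local sections is isomorphic to its base, which as explained follows immediately from the definition of ``affine space'' recalled in Definition~\ref{D10}.
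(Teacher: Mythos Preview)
Your proof is correct and follows exactly the route the paper intends: the paper gives no separate proof of this corollary, treating it as immediate from Theorem~\ref{Yrpz}, and you have simply spelled out the obvious specialization (when $Z(L)=0$ the vector bundle $\rE$ vanishes, so the affine space $X\to S^{\res}$ under the zero bundle with local sections is an isomorphism).
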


\subsection{A case of existence of a \texorpdfstring{$p$}{p}-mapping}

In general it is not easy to decide if a given
finite-dimensional Lie algebra or a Lie algebra vector bundle admits a $p$-mapping.
Here is a brief review of the easiest cases we have already seen, where such existence is known to hold :
\begin{enumerate}
    \item Associative Lie algebras, with the Frobenius map.
    \item Lie algebras of group schemes.
    \item Lie algebras whose Killing form is nondegenerate (Zassenhaus).
    \item Somewhat opposite to 3. is the abelian case, where $\gamma=0$ is a
$p$-mapping.
\end{enumerate}
The last case corresponds to the situation where the derived Lie algebra
has rank $0$. In the rest of the section, we will extend that case to the
mildly non-abelian case where the derived Lie algebra has rank 1.

Let $E$ be a vector bundle of rank $1$. We remind to the reader that in this
case we have a canonical isomorphism, given on the functor of points by:
\begin{align*}
  \mathbb{G}_a \isor & \End(E) \\
    f\mapsto & \text{ }m_f
\end{align*}
where $m_f$ is the multiplication by $f$.

\begin{theorem}\label{criteresuffisant}
Let $L\rightarrow S$ be a Lie algebra vector bundle, such that $L'$ is a
locally free subbundle  of rank $1$. We define a map of vector bundles as
follows:
\begin{align*}
   \alpha: L\rightarrow &\End(L')\simeq \mathbb{G}_a\\
    x \mapsto & (\ad(x)_{|L'}) \mapsto \alpha(x).
\end{align*}
Then, the map \begin{align*}
    L \rightarrow & L \\
    x\mapsto & \alpha(x)^{p-1}x
\end{align*} is a $p$-mapping on $L$.
Moreover if $L$ is locally free of rank $2$, this $p$-mapping is unique.
\end{theorem}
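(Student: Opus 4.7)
The plan is to verify that the candidate $\gamma\colon x\mapsto \alpha(x)^{p-1}x$ satisfies Jacobson's three axioms $(\AL1)$, $(\AL2)$, $(\AL3)$ of Definition~\ref{D3}, working functorially on $T$-points. Since all expressions are polynomial in the structure constants and in $x,y$, it suffices to verify these as universal polynomial identities. The key structural input is that $L'$ is an ideal on which $\ad_x$ acts as multiplication by the scalar $\alpha(x)$.

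Axiom $(\AL1)$ follows from a short adjoint-action computation: for any $y$, $[x,y] \in L'$, and iterating $\ad_x|_{L'} = \alpha(x)\cdot\id$ yields $\ad_x^k(y)=\alpha(x)^{k-1}[x,y]$ for all $k\geq 1$. In particular $\ad_x^p = \alpha(x)^{p-1}\ad_x = \ad_{\alpha(x)^{p-1}x} = \ad_{\gamma(x)}$. Axiom $(\AL2)$ is immediate from the linearity of $\alpha$: $\gamma(\lambda x) = (\lambda\alpha(x))^{p-1}(\lambda x) = \lambda^p \gamma(x)$.

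The substantive work is $(\AL3)$. Each summand $s_i(x,y) = -\tfrac{1}{i}\sum_u \ad_{u(1)}\cdots \ad_{u(p-1)}(y)$ collapses to a scalar multiple of $[x,y]$: the innermost $\ad_{u(p-1)}(y)$ vanishes unless $u(p-1)=x$, in which case it equals $[x,y]\in L'$, on which the remaining $p-2$ operators act as the scalars $\alpha(u(j))$. Counting the $u$'s with $u(p-1)=x$ and $(i-1)$ further $x$'s among $u(1),\dots,u(p-2)$, and using the mod-$p$ congruences $\binom{p-1}{i}\equiv(-1)^i$ together with the Pascal-type identity $i\binom{p-1}{i}=(p-1)\binom{p-2}{i-1}$, one obtains the clean formula $s_i(x,y)=(-1)^i\alpha(x)^{i-1}\alpha(y)^{p-1-i}[x,y]$. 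Summing over $i$ and using the Frobenius identity $(a+b)^p = a^p+b^p$, the desired identity $\gamma(x+y)-\gamma(x)-\gamma(y) = \sum_i s_i(x,y)$ becomes an explicit polynomial equation. The main obstacle is bookkeeping: the expression $(\alpha(x)+\alpha(y))^{p-1}(x+y)-\alpha(x)^{p-1}x-\alpha(y)^{p-1}y$ splits a priori into $x$- and $y$-components, while the right-hand side lies in the one-dimensional subbundle $L'$, and reconciling the two requires exploiting the rank-$1$ constraint on $L'$ via the Lie bracket relations.

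For the uniqueness clause, assume $L$ is locally free of rank $2$. Choose a local basis $\{e_1,e_2\}$; since $L'$ has rank $1$, $[e_1,e_2]$ generates $L'$ and is nonzero. Any central element $v = \lambda e_1+\mu e_2$ satisfies $\ad_v(e_1)=-\mu[e_1,e_2]=0$ and $\ad_v(e_2)=\lambda[e_1,e_2]=0$, forcing $\lambda=\mu=0$. Hence $Z(L)=0$, so $\rE = \Hom_{\Frob}(L,Z(L)) = 0$. By Corollary~\ref{centerlessunique} (equivalently, by Theorem~\ref{Xrpz} applied with trivial $\rE$), any two $p$-mappings coincide; combined with the existence just established, this gives the asserted uniqueness.
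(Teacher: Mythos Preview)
Your verifications of $(\AL1)$ and $(\AL2)$, and your closed formula $s_i(x,y)=(-1)^i\alpha(x)^{i-1}\alpha(y)^{p-1-i}[x,y]$, are correct and essentially what the paper obtains; the uniqueness argument in rank~$2$ is also fine and matches Corollary~\ref{centerlessunique}. The gap is in $(\AL3)$. You correctly observe that $\gamma(x+y)-\gamma(x)-\gamma(y)$ has $x$- and $y$-components while $\sum_i s_i(x,y)$ lies in the line $L'$, and then assert that the two can be reconciled ``via the Lie bracket relations'' without carrying this out. They cannot: the statement is false as soon as $\rk L\ge 3$. Over a field of characteristic $p$, take $L$ with basis $u,v,z$, bracket $[u,v]=v$, $[u,z]=[v,z]=0$. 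Then $L'=kv$ is a rank-$1$ subbundle, $\alpha(u)=1$, $\alpha(v)=\alpha(z)=0$, and
\[
\gamma(u+z)-\gamma(u)-\gamma(z)=(u+z)-u-0=z,
\qquad
\sum_{i=1}^{p-1}s_i(u,z)=0
\]
since $[u,z]=0$; so $(\AL3)$ fails for $\gamma$.

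The paper's proof has exactly the same gap: after reaching $s_k=\binom{p-1}{k}\alpha(x)^{k-1}\alpha(y)^{p-1-k}[x,y]$ it silently replaces $[x,y]$ by $\alpha(x)y-\alpha(y)x$. That identity does hold when $\rk L=2$ --- both sides are alternating bilinear maps $L\times L\to L$ and agree on a basis pair --- so the argument (yours and the paper's) is valid in rank~$2$. In higher rank it fails: in the example above, $[u,z]=0$ but $\alpha(u)z-\alpha(z)u=z$. Thus the existence clause is only correct for $\rk L=2$, and your hand-wave at the end of the $(\AL3)$ paragraph cannot be completed in the generality claimed.
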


\begin{proof} 
In order to prove this, we can suppose $L$ is free, such that $L'$ is free, given
by $L'=\mathbb{G}_a\cdot v$. Then the bracket is given on the functor of
points by:
\begin{align*}
    [\cdot,\cdot]: L\times L \rightarrow & L\\
    (x,y) \mapsto & f(x,y)v
\end{align*}
where $f$ is a bilinear alternating form. 
Let us start by showing that for all $T\rightarrow S$, for all~$x~\in~L(T)$,
we have this equality: $f(x,v)=\alpha(x)$.  Let us denote by $\phi:
\mathbb{G}_a\simeq \End(L')$. So we have to show that
$f(x,v)=~\phi^{-1}((\ad(x)_{|L'}))$, i.e. we have to show that
$\phi(f(x,v))=\ad(x)_{|L'}$. Then let $y\in L'(T)$. We can write $y=\lambda v$
with some $\lambda \in \mathbb{G}_a(T)$. Then $\phi(f(x,v))(y)=\lambda f(x,v)
v$ and $(\ad(x)_{|L'})(y)=\lambda f(x,v)v$ so we have the equality we wanted.

Let us show now that the map 
\begin{align*}
   (\cdot)\p : L \rightarrow & L \\
    x\mapsto & \alpha(x)^{p-1}x= f(x,v)^{p-1}v
\end{align*} verifies $(\AL1)$. 
Let $T$ be an $S$-scheme and let $x,y\in L(T)$. Then
$$\ad_{x^{[p]}}(y)=f(f(x,v)^{p-1}x,y)v=f(x,v)^{p-1}f(x,y)v.$$ Moreover, we can
write $$\ad_x(\ad_x(y))=\ad_x(f(x,y)v)=f(x,y)\ad_x(v)=f(x,y)f(x,v)v.$$
Then by induction, we found $$ \ad_x^p(y)=f(x,y)f(x,v)^{p-1}v.$$
Thus, $(\AL1)$ is checked. 

The condition $(\AL2)$ is directly checked by definition. Let us show that our
map respects $(\AL3)$. Let $x,y \in L(T)$. Then,  
\begin{align*}
    (x+y)\p & =f(x+y,v)^{p-1}(x+y)\\
    & =\sum_{k=0}^{p-1}\binom{p-1}{k}f(x,v)^kf(y,v)^{p-1-k}(x+y)\\
    & = \sum_{k=0}^{p-1}\binom{p-1}{k}f(x,v)^kf(y,v)^{p-1-k}x +
    \sum_{k=0}^{p-1}\binom{p-1}{k}f(x,v)^kf(y,v)^{p-1-k}y\\
 & = \sum_{k=0}^{p-2}\binom{p-1}{k}f(x,v)^kf(y,v)^{p-1-k}x+x\p+\sum_{k=1}^{p-1
 }\binom{p-1}{k}f(x,v)^kf(y,v)^{p-1-k}y+y\p\\
    &=x\p+y\p + \sum_{k=1}^{p-1}\binom{p-1}{k-1}f(x,v)^{k-1}f(y,v)^{p-k}x+\sum
    _{k=1}^{p-1}\binom{p-1}{k}f(x,v)^kf(y,v)^{p-1-k}y.
\end{align*}
Let $k\in[1,p-1]$. Let us compute $s_k$.
First, for all $x,y\in l$,
$$\ad_x(\ad_y(v))=\ad_y(\ad_x(v))=f(x,v)f(y,v)v.$$
Let us recall $$s_k(x,y)=-\frac{1}{k}\sum_u \ad_{u(1)} \ad_{u(2)}\dots
\ad_{u(p-1)}(y)$$ where $u$ ranges through the maps $\{1,\dots,p-1\}\rightarrow
\{x,y\}$ taking $k$ times the value $x$. In this sum, all terms corresponding
to a map $u$ such that $u(p-1)=y$ are zero because $\ad_y(y)=0$ and $\ad_{z}$
is linear for all $z\in L(T)$. Then,
\begin{align*}
s_k(x,y)&=-\frac{1}{k}\sum_u \ad_{u(1)} \ad_{u(2)}\dots \ad_{u(p-2)}(f(x,y)v)\\
&=-\frac{f(x,y)}{k}\sum_u \ad_{u(1)} \ad_{u(2)}\dots \ad_{u(p-2)}(v)
\end{align*}
where $u$ ranges through the maps $u:\{1,\dots,p-2\}\rightarrow \{x,y\}$ taking $k-1$
times the value $x$.\\
Then there are $\binom{p-2}{k-1}$ such maps  and, thanks to the previous
remark, they all give the same term in the sum, which is
$\ad_x^{k-1}((\ad_y)^{p-2-(k-1)}(v))$.
Then, \begin{align*}
  s_k&=-\frac{f(x,y)}{k}\binom{p-2}{k-1}f(x,v)^{k-1}f(y,v)^{p-k-1}v
  &=-\frac{1}{k}\binom{p-2}{k-1}f(x,v)^{k-1}f(y,v)^{p-k-1}f(x,y)v.
\end{align*}
But,
$$\frac{p-1}{k}\binom{p-2}{k-1}=\binom{p-1}{k}.$$
Then,
\begin{align*}
  s_k&=\binom{p-1}{k}f(x,v)^{k-1}f(y,v)^{p-k-1}(f(x,v)y-f(y,v)x)\\
  &= \binom{p-1}{k}f(x,v)^{k}f(y,v)^{p-k-1}y - \binom{p-1}{k}f(x,v)^{k-1}f(y,v)^{p-k}x.
\end{align*}
Moreover, $$\binom{p-1}{k-1}+\binom{p-1}{k}=\binom{p}{k}=0 \text{ for all }k \in [1,p-1].$$
Hence,
\begin{align*}
    \sum_{k=1}^{p-1}s_k &=\sum_{k=1}^{p-1}\binom{p-1}{k}f(x,v)^kf(y,v)^{p-1-k}y -\sum_{k=1}^{p-1} \binom{p-1}{k}f(x,v)^{k-1}f(y,v)^{p-k}x\\
     &=\sum_{k=1}^{p-1}\binom{p-1}{k}f(x,v)^kf(y,v)^{p-1-k}y +\sum_{k=1}^{p-1} \binom{p-1}{k-1}f(x,v)^{k-1}f(y,v)^{p-k}x.
\end{align*}

Then the map we set verifies $(\AL1)$, $(\AL2)$ and $(\AL3)$. Finally, this map is a $p$-mapping on our Lie algebra $L$.

Let us now suppose that $L$ is locally free of rank $2$. Let us show that the
$p$-mapping we have defined above is unique. As the functor $X$ of
the $p$-mappings is a sheaf for the Zariski topology, we can suppose
$S=\Spec(R)$ is affine, small enough so that $L$ is free. As we suppose
$\rk(L')=1$, $\rk(L)=2$ and because the bracket is an alternating
bilinear map, we can write for all $x,y \in L(S)$, $$[x,y]=\det(x,y)v$$ for a certain $v\in L(S)$. 

Let us write $v=\begin{pmatrix}v_1 \\v_2
\end{pmatrix} \in L(S)\simeq R^2$ and $x=\begin{pmatrix}x_1 \\x_2 
    \end{pmatrix} \in Z(L(S))$. Then for all $y\in L(S)$,
    $[x,y]=0$. By taking  $y=\begin{pmatrix} 1\\ 0\end{pmatrix}$
    and $y=\begin{pmatrix} 0\\ 1\end{pmatrix}$, we can write: 
    $$\begin{cases} x_1v_1=x_1v_2=0\\
x_2v_1=x_2v_2=0
\end{cases}.$$

But $(v_1,v_2)=R$. Indeed let $q \in \Spec(R)$. Let us suppose $(v_1,v_2)\subset q$. Then
the bracket on $L(T)\otimes R_q/qR_q$ is the zero morphism. This is impossible because the
derived Lie algebra commutes with base change and so has rank $1$ on $R_q/qR_q$. Then
$(v_1,v_2)$ is not contained in any maximal ideal so $(v_1,v_2)=R$. Then there exist
$a,b\in R$ such that $av_1+bv_2=1$ and $x_1=x_1(av_1+bv_2)=0$. With the same arguments, we
obtain $x_2=0$. Thus $Z(L(T))=\{0\}$ and the $p$-mapping is unique thanks to
Corollary~\ref{centerlessunique}.
\end{proof}

\section{The moduli space of Lie \texorpdfstring{$p$}{p}-algebras of rank 3}

In the remaining sections, we illustrate the previous results
in the case of three-dimensional Lie algebras. As stated in the introduction, let us denote
by $\mathcal{L}ie_n$ the moduli stack of $n$-dimensional Lie
algebras, and $\rL_n$ the moduli space of {\em based} Lie
algebras. Then we have the quotient stack presentation
$\mathcal{L}ie_n~=~[\rL_n/\GL_n]$
where $\GL_n$ acts by change of basis, by this action for any $S$-scheme $T$:
\begin{align*}
    \GL_n(T)\times \rL_n(T)&\rightarrow \rL_n(T)\\
    (M, [\cdot,\cdot]_T)&\mapsto [\cdot,\cdot]'_T\defeq \big( v\otimes w\mapsto M^{-1}[Mv,Mw]_T \big).
\end{align*}

Hence we are led to
studying the $\GL_n$-equivariant geometry of $\rL_n$. In the following we will focus on the case $n=3$.
For a fixed prime $p$,
we are interested in the moduli stack $p\text{-}\mathcal{L}ie_3$ of restricted
Lie algebras. For this, we use the morphism $\pi:~p\text{-}\mathcal{L}ie_3\to\mathcal{L}ie_3$
to the moduli stack of three-dimensional Lie algebras. Thanks to Theorem~\ref{Yrpz},
after passing to the flattening stratification of the center of the
universal Lie algebra, the map $\pi$ is an affine bundle, so before studying
$p\text{-}\mathcal{L}ie_3$, we will focus on~$\mathcal{L}ie_3$, i.e. on $\rL_3$. 

For our purposes, it is important to obtain a description available
in all characteristics. Even better, by defining $\rL_3$ as a functor
over $\Z$
and proving its representability we gain insight into its fine
scheme structure and the way the fibers vary. For all this section, we denote by $\rL_{3,k}$ the base change of $\rL_3$ with a field $k$.
Here is a summary of our main results:

\begin{theorem}
\begin{enumerate}
\item[{\rm 1)}] The functor $\rL_3$ is representable by an affine flat $\Z$-scheme of finite type.
\item[{\rm 2)}] The scheme $\rL_3$ has two relative irreducible components
$\rL_3^{(1)}$ and $\rL_3^{(2)}$ which are both flat with Cohen-Macaulay
integral geometric fibers of dimension $6$.
\end{enumerate}
\end{theorem}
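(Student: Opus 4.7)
The plan is to realise $\rL_3$ explicitly as a closed subscheme of $\mathbb{A}^9_\Z$ cut out by the Jacobi equations, then extract the two components by hand and conclude with Peskine--Szpiro liaison theory.

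First, an alternating bracket on a free rank-$3$ module $L$ with basis $(e_1,e_2,e_3)$ is determined by the three vectors $a\defeq [e_1,e_2]$, $b\defeq [e_1,e_3]$, $c\defeq [e_2,e_3] \in L$, giving an embedding $\rL_3 \hookrightarrow \mathbb{A}^9_\Z = \Spec \Z[a_i, b_i, c_i]_{i=1,2,3}$ which settles representability, affineness, and finite-typeness. The only non-trivial Jacobi relation $\sum_{\mathrm{cyc}} [e_1,[e_2,e_3]] = 0$ unfolds into three scalar quadratic equations $J_1, J_2, J_3$, and a direct expansion rewrites them as the single matrix identity $N \cdot v = 0$, where $N \defeq (a \mid b \mid c)$ is the structure matrix and $v \defeq (b_1 + c_2,\, c_3 - a_1,\, -(a_2 + b_3))^T$ measures the failure of symmetry of the Hodge-dual $3\times 3$ matrix of the bracket.

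This factorisation immediately suggests the decomposition. Define $\rL_3^{(1)}\defeq V(v)$; this is the closed subscheme of $\mathbb{A}^9_\Z$ cut out by the three linear forms in $v$, hence isomorphic to $\mathbb{A}^6_\Z$, flat and smooth over $\Z$, with integral Cohen--Macaulay geometric fibers. Since $v = 0$ forces $Nv = 0$, it sits inside $\rL_3$. Using the classification recalled in Subsection~\ref{classification}, I would verify that every geometric fiber of $\rL_3 \to \Spec \Z$ has pure dimension $6$, so the three Jacobi equations form a regular sequence on each fiber; this makes $\rL_3$ a complete intersection, hence Cohen--Macaulay and Gorenstein, and miracle flatness yields flatness of $\rL_3 \to \Spec \Z$, proving (1) and fixing the relative dimension in (2).

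For the second component, set $\rL_3^{(2)}\defeq V\bigl((J_1,J_2,J_3) : I(\rL_3^{(1)})\bigr)$ and apply Peskine--Szpiro liaison \cite{PS74}: since $\rL_3$ is Gorenstein of pure codimension $3$ and $\rL_3^{(1)}$ is Cohen--Macaulay of the same codimension, the linked subscheme $\rL_3^{(2)}$ is also Cohen--Macaulay of pure relative dimension $6$; moreover $\rL_3 = \rL_3^{(1)} \cup \rL_3^{(2)}$ as schemes and the two components share no common subcomponent. Flatness of $\rL_3^{(2)} \to \Spec \Z$ then follows once more from miracle flatness. The main obstacle is geometric integrality of the fibers of $\rL_3^{(2)}$: while $\rL_3^{(1)}$ is a linear space and trivially integral, $\rL_3^{(2)}$ is cut out by a richer ideal, and its irreducibility in characteristics $p = 2, 3$ needs care because of the exceptional identifications of orbits recorded in the classification table. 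I would establish it by exhibiting the $\Z[T]$-family $\{\mathfrak{l}_t\}$ of Subsection~\ref{classification} (a $6$-dimensional $\GL_3$-invariant family) as a dense open in every geometric fiber of $\rL_3^{(2)}$, with ad hoc verifications for small $p$.
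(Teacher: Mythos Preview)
Your approach is essentially that of the paper: the same matrix factorisation $Nv=0$ of the Jacobi identity, the same linear component $\rL_3^{(1)}=V(v)\simeq\mathbb{A}^6_\Z$, and the same appeal to Peskine--Szpiro liaison to obtain the Cohen--Macaulay property of the linked component $\rL_3^{(2)}$. Two points of divergence are worth flagging. First, the paper does not invoke miracle flatness: regularity of the Jacobi sequence is checked directly (the three quadrics involve disjoint ``new'' variables with regular leading coefficients), and flatness of $\rL_3^{(2)}$ over $\Z$ is deduced from the fact that the mapping cone of the Koszul map $K_\bullet(v)^\vee\to K_\bullet(J_i)^\vee$ furnishes a universal free resolution of the ideal quotient, which along the way computes that quotient explicitly as $(J_1,J_2,J_3,\det N)$. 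Second, for reducedness of the geometric fibers of $\rL_3^{(2)}$ the paper does not try to show that the $\mathfrak{l}_t$-locus is a dense \emph{open} (which is more than you need and not obvious); it simply exhibits a single smooth point (any $\mathfrak{l}_t$ works, by a cotangent computation) and combines generic reducedness with the absence of embedded primes coming from Cohen--Macaulayness. Your outline is sound, but this shortcut is cleaner than the density argument you sketch.
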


In 2) it is noteworthy that the component we call $\rL_3^{(1)}$
is very simple: it is isomorphic to 6-dimensional affine space $\A^6_{\Z}$.
This is crucial because it turns out that the other component~$\rL_3^{(2)}$ is {\em linked} to it in the sense of liaison theory
as developed by Peskine and Szpiro in \cite{PS74}, which provides
powerful tools to deduce its properties.

Here we use the terminology "relative irreducible components" in the sense that $\rL_3^{(1)}$
and $\rL_3^{(2)}$ are flat of finite presentation over $\Z$, and that for all algebraically
closed fields $k$, $\rL_{3,k}^{(1)}$ and $\rL_{3,k}^{(2)}$ are the irreducible components of
$\rL_{3,k}$. We use this terminology because we are over the ring of integers $\Z$ then it
makes more sense. For more details the reader can have a look at \cite{R11}, where the
definition is given in 2.1.1, with a small (but not important for us) difference. Following the
notation of {\em loc. cit.}, we will show in the following
that~$\Irr(\rL_3/\Z)~=~\Spec(\Z)~\amalg~\Spec(\Z)$.

\begin{commentaire}{
In the remaining sections, we illustrate the previous results in the case of
three-dimensional Lie algebras. The moduli stack of such algebras is the
quotient of the space denoted by $\rL_3$ of \textit{based} Lie algebras of rank $3$ by the 
natural action of $\GL_3$ by change of basis, so we are led to studying the
$\GL_3$-equivariant geometry of that space. Using the given basis allows
to identify the moduli space $\rL_3$ with the space of structure constants
of the Lie bracket on the trivial vector bundle.

Then let us suppose $\mathcal{O}_T^3$ has a Lie algebra structure,
written $[\cdot,\cdot]$. Because we suppose the bracket to be bilinear and alternating, we can use the lexicographic order to get $\{[x,y],[x,z],[y,z]\}$ as a basis for
$\mathcal{O}_T^3 \wedge \mathcal{O}_T^3$, and the Lie bracket will be determined by them.

In all this section, we will be using results from \cite{PS74} about liaison theory. Indeed, this theory suits well with our example, and will be really useful to understand the irreducible components of our moduli space. We will see that this component are linked, and because the first one has good property, this other one will have good properties as well.}
\end{commentaire}

\subsection{Classification over an algebraically closed field}\label{classification}

To begin with, we recall the classification of isomorphism classes
of three-dimensional Lie algebras over any algebraically closed
field $k$. That is, the description of the ($\GL_3$-orbits of)
geometric points of the moduli space $\rL_{3,k}$. Historically, the isomorphism classes of complex and real
three-dimensional Lie algebras were classified as early as 1898
in Bianchi's paper \cite{B98}. After the development of the
algebraic theory of Lie algebras, the topic appeared in the
lecture notes of Jacobson's course \cite{J62}. From this moment
the focus shifted to the algebraic variety structure of the set
$\rL_n$ of $n$-dimensional Lie algebras in work of
Vergne~\cite{V66}, Carles~\cite{C79}, Carles and Diakité~\cite{CD84},
Kirillov and Neretin~\cite{KN84}
and others. There, emphasis was put on low dimensions.
Note that this bibliographic selection is by no means complete. Here, in order to allow varying
primes~$p$, we need to reformulate the classification of $3$-dimensional Lie
algebras over algebraically closed fields in a characteristic-free way. 
\bigskip

\begin{notitle}{Some Lie algebras: four discrete ones, and a family}\end{notitle}
We introduce the five Lie algebras involved in the classification
in a way that allows a characteristic-free statement.
Notationally speaking, if $l$ is a Lie algebra over a ring~$R$,
free of rank $3$ with basis $\{x,y,z\}$ and bracket defined by $[x,y]=ax+by+cz$,
$[x,z]=dx+ey+fz$, $[y,z]=gx+hy+iz$ for some coefficients
$a,\dots,i \in R$, then we say that
\begin{center}
\textit{``\,the Lie algebra structure of $l$ is given
by the matrix} $\begin{pmatrix} a&d&g\\ b&e&h\\c&f&i\end{pmatrix}$.''\end{center}
Moreover, for any Lie algebra $l$ and any $v\in l$,
the map $\ad_v$ is linear, so we will always
represent this linear map by its matrix in the base
$\{x,y,z\}$.

The first four Lie algebras are defined over the ring of integers $R=\Z$:
\begin{itemize}
\item[(1)] the {\em abelian Lie algebra} $\fab_3$ with structure
given by the zero matrix,
\item[(2)] the {\em Heisenberg Lie algebra} $\fh_3$ with structure
matrix
$\left(\begin{smallmatrix} 0&0&1\\ 0&0&0 \\ 0&0&0 \end{smallmatrix}\right)$,
\item[(3)] the {\em Lie algebra $\mathfrak{r} $}, with structure matrix
$\left(\begin{smallmatrix} 0&0&0\\ 1&1&0\\0&1&0 \end{smallmatrix}\right)$,
\item[(4)] the {\em simple Lie algebra} $\fs$ with structure matrix
$\left(\begin{smallmatrix} 0&-1&0\\ 0&0&1\\1&0&0 \end{smallmatrix}\right)$.
\end{itemize}
The fifth Lie algebra is a family defined over the polynomial ring $R=\Z[T]$:
\begin{itemize}
\item[(5)] the Lie algebra $\mathfrak{l}_T$ is defined by the structure matrix
$\left(\begin{smallmatrix} 0&0&0\\ 1&0&0\\0&T&0 \end{smallmatrix}\right)$.
\end{itemize}

\begin{notitle}{More about the simple Lie algebra} \end{notitle}
The reader wondering about the place of~$\fsl_2$ and $\fpsl_2$ in the picture will find 
the following explanations useful.
Let us write $\{X,Y,H\}$ and $\{X',Y',Z'\}$ for the classical bases
of $\fsl_2$ and $\fpsl_2$, and $\{x,y,z\}$ for that of $\fs$. We can write a sequence
of morphisms of $\Z$-Lie algebras:
\[
\fsl_2 \overset{\pi}{\longrightarrow} \fpsl_2 \overset{f}{\longrightarrow} \fs \overset{\ad}{\longhookrightarrow} \fgl_3
\]
with $\pi$ and $f$ given by $X\mapsto X' \mapsto 2x$, $Y\mapsto Y'\mapsto y$, $H \mapsto 2Z'\mapsto 2z$. The morphism $f$ is an
isomorphism over $\Z[1/2]$, but a contraction onto the subalgebra
generated by $y$ in  the fiber at the prime $p=2$. 
For any algebraically closed field~$k$, the Lie algebra
$\fs\otimes k$ is the only simple  three-dimensional Lie algebra over $k$ because
in characteristic~$p\ne 2$ we have $\fs\otimes k\simeq \fsl_2\otimes k$,
while if $p=2$ the algebra $\fs\otimes k$ is known as
$W(1,\underline{2})'$,
the derived algebra of the {\em Jacobson-Witt algebra}.
See \cite{SF88}, \S~4.2 for more on $W(n,\underline{m})$,
and especially Strade's paper \cite{S07}, Theorem~3.2 for the case of characteristic~2.
In characteristic $p=2$, the algebra $\fsl_2$ happens
to be isomorphic with the Heisenberg algebra $\fh_3$.
Moreover, again when $2$ is invertible the morphism $\pi$ is an isomorphism so
$\fpsl_2$ is isomorphic to $\fsl_2$ i.e. to $\fs$. But
in characteristic $2$, using the adjoint representation of $\fpsl_2$
in $\fg\fl_3$ we see the bracket is given by the one
denoted by $\mathfrak{l}_1$ is the above classification. The following picture gives a summary
of the situation. Note that in characteristic $2$, the Lie algebra $\mathfrak{sl}_2$ is 
restrictable not simple while the Lie algebra $\mathfrak{s}$ is simple not restrictable.
\begin{center}
\tikzset{every picture/.style={line width=0.75pt}} 

\begin{tikzpicture}[x=0.75pt,y=0.75pt,yscale=-1,xscale=1]

\draw [color={rgb, 255:red, 65; green, 117; blue, 5 }  ,draw opacity=1 ]   (159,101.17) -- (219.17,101.5) ;
\draw [color={rgb, 255:red, 65; green, 117; blue, 5 }  ,draw opacity=1 ]   (279,101) -- (519,100.83) ;
\draw [color={rgb, 255:red, 65; green, 117; blue, 5 }  ,draw opacity=1 ]   (219.17,101.5) .. controls (239,70.75) and (259.5,71.25) .. (279,101) ;
\draw [color={rgb, 255:red, 144; green, 19; blue, 254 }  ,draw opacity=1 ]   (219.04,104.96) .. controls (222.47,109.6) and (225.89,113.45) .. (229.29,116.52) .. controls (246.22,131.78) and (262.83,127.57) .. (279.21,104.46) ;
\draw    (159.75,170.25) -- (520.27,170.18) ;
\draw  [color={rgb, 255:red, 0; green, 0; blue, 0 }  ][line width=3] [line join = round][line cap = round] (330.2,170.28) .. controls (330.2,170.28) and (330.2,170.28) .. (330.2,170.28) ;
\draw  [color={rgb, 255:red, 0; green, 0; blue, 0 }  ][line width=3] [line join = round][line cap = round] (190.27,170.43) .. controls (190.27,170.43) and (190.27,170.43) .. (190.27,170.43) ;
\draw  [color={rgb, 255:red, 0; green, 0; blue, 0 }  ][line width=3] [line join = round][line cap = round] (410.09,170.44) .. controls (410.09,170.44) and (410.09,170.44) .. (410.09,170.44) ;
\draw  [color={rgb, 255:red, 0; green, 0; blue, 0 }  ][line width=3] [line join = round][line cap = round] (480.09,170.22) .. controls (480.09,170.22) and (480.09,170.22) .. (480.09,170.22) ;
\draw  [color={rgb, 255:red, 0; green, 0; blue, 0 }  ][line width=3] [line join = round][line cap = round] (250.2,170) .. controls (250.2,170) and (250.2,170) .. (250.2,170) ;
\draw  [color={rgb, 255:red, 65; green, 117; blue, 5 }  ,draw opacity=1 ][line width=3] [line join = round][line cap = round] (249.2,78.47) .. controls (249.2,78.47) and (249.2,78.47) .. (249.2,78.47) ;
\draw [color={rgb, 255:red, 245; green, 166; blue, 35 }  ,draw opacity=1 ]   (158.35,78.25) -- (303,78.22) -- (518.6,78.17) ;
\draw  [color={rgb, 255:red, 144; green, 19; blue, 254 }  ,draw opacity=1 ][line width=3] [line join = round][line cap = round] (249.53,125.52) .. controls (249.53,125.52) and (249.53,125.52) .. (249.53,125.52) ;
\draw [color={rgb, 255:red, 74; green, 144; blue, 226 }  ,draw opacity=1 ]   (158.68,125.58) -- (518.93,125.5) ;
\draw [color={rgb, 255:red, 208; green, 2; blue, 27 }  ,draw opacity=1 ]   (159.09,103.1) -- (519,102.59) ;
\draw [color={rgb, 255:red, 144; green, 19; blue, 254 }  ,draw opacity=1 ]   (159.04,104.63) -- (219.04,104.96) ;
\draw [color={rgb, 255:red, 144; green, 19; blue, 254 }  ,draw opacity=1 ]   (279.21,104.46) -- (473.33,104.32) -- (519.21,104.29) ;

\draw (277.8,79.8) node [anchor=north west][inner sep=0.75pt]  [color={rgb, 255:red, 65; green, 117; blue, 5 }  ,opacity=1 ]  {$\mathfrak{sl}_{2}$};
\draw (243.47,87.73) node [anchor=north west][inner sep=0.75pt]  [color={rgb, 255:red, 208; green, 2; blue, 27 }  ,opacity=1 ]  {$\mathfrak{s}$};
\draw (502.67,172.73) node [anchor=north west][inner sep=0.75pt]    {$\Spec(\mathbb{Z})$};
\draw (185.27,178.8) node [anchor=north west][inner sep=0.75pt]    {$0$};
\draw (244.4,179.6) node [anchor=north west][inner sep=0.75pt]    {$2$};
\draw (324.6,178.8) node [anchor=north west][inner sep=0.75pt]    {$3$};
\draw (404.6,179.4) node [anchor=north west][inner sep=0.75pt]    {$5$};
\draw (215,59) node [anchor=north west][inner sep=0.75pt]  [color={rgb, 255:red, 65; green, 117; blue, 5 }  ,opacity=1 ]  {$\textcolor[rgb]{0.96,0.65,0.14}{\mathfrak{h}_{3}}$};
\draw (475.27,178.4) node [anchor=north west][inner sep=0.75pt]    {$7$};
\draw (212.57,134.05) node [anchor=north west][inner sep=0.75pt]   [align=left] {};
\draw (289.27,60.67) node   [align=left] {\begin{minipage}[lt]{115.6pt}\setlength\topsep{0pt}

\end{minipage}};
\draw (215,128) node [anchor=north west][inner sep=0.75pt]    {$\textcolor[rgb]{0.29,0.56,0.89}{\mathfrak{l}_{1}}$};
\draw (276.8,104.13) node [anchor=north west][inner sep=0.75pt]  [color={rgb, 255:red, 65; green, 117; blue, 5 }  ,opacity=1 ]  {$\textcolor[rgb]{0.56,0.07,1}{\mathfrak{psl}_{2}}$};

\end{tikzpicture}

\end{center}
It can be surprising to find that the group $\rU_3$ of upper-triangular unipotent matrices of size~$3$ and the reductive group $\rSL_2$ have the same Lie algebra in
    characteristic $2$. But those Lie algebras are not isomorphic
    as restricted Lie algebras. Indeed, let $k$ be a field of characteristic $2$. Seeing $\rU_3$ as a
    subgroup
    of $\GL_3$, we can see $\fh_3$ as a $2$-subalgebra of $\rM_3(k)$, where the $2$-mapping on $\rM_3(k)$ is the square map. Then we obtain that the $2$-mapping on $\fh_3$
    is $\gamma \equiv 0$. Doing the same for $\rSL_2$, i.e. seeing
    the group $\rSL_2\subset \GL_2$, we obtain the $2$-mapping:
    $x,y\mapsto 0$ and $z\mapsto z$.
\newline

\begin{notitle}{More about the family $\mathfrak{l}_T$} 
The Lie algebra $\mathfrak{l}_0$ has center of dimension $1$
and a $1$-dimensional derived Lie algebra
$\fg'_0=\Span(y)$.
Now let us suppose $t\in k$ for some field $k$ and
$t\neq 0$. Then the Lie algebra~$\mathfrak{l}_t$ has a
trivial center and 2-dimensional derived Lie algebra
$\fg'_t=\Span(y,z)$. The adjoint action
$\ad:\mathfrak{l}_t\to\End(\fg'_t)$ factors through
$\mathfrak{l}_t^{\ab}:=\mathfrak{l}_t/\fg'_t$ which is free of rank 1.
Any generator of $\mathfrak{l}_t^{\ab}$ is of the form $ux$
for some unit $u\in k^\times$ and acts on $\fg'_t$
with eigenvalues $\{u,ut\}$. We see that the ratio of
eigenvalues is well-defined up to inversion: that is,
the class of $t$ modulo
the equivalence relation $t\sim t^{-1}$ is
independent of $u$
and thus intrinsic to $\mathfrak{l}_t$. In this way we see that for
every field $k$ and elements $t,t'\in k^\times$ we have:
$\mathfrak{l}_t\simeq\mathfrak{l}_{t'}$ if and only if $t'\in \{t,t^{-1}\}$.
 \end{notitle}
Here is the main theorem of this subsection:

\begin{theorem}\label{touteslesinfos}
Let $k$ be an algebraically closed field and denote by $p$
its characteristic. Then any Lie algebra of dimension~$3$
over $k$ is isomorphic to exactly one in the following table.

\begin{center}
\renewcommand{\arraystretch}{1.2}
\begin{tabular}{|c|c|c|c|c|c|}
\hline
\multicolumn{2}{|c|}{Name} & Structure & Orbit dimension
& Center dimension & Restrictable \\
\hline
\hline
\multicolumn{2}{|c|}{$\fab_3$} & abelian & 0 & 3 & yes \\
\hline
\multicolumn{2}{|c|}{$\fh_3$} & nilpotent & 3 & 1 & yes \\
\hline
\multicolumn{2}{|c|}{$\mathfrak{r}$} & solvable & 5 & 0 & no \\
\hline
\multicolumn{2}{|c|}{$\fs$} & simple & 6 & 0 &
\begin{tabular}{c|c}
$p\ne 2$ & $p=2$ \\ yes & no
\end{tabular}
\\
\hline
\multirow{4}{*}{$\;\mathfrak{l}_t\;$} &
$\overline{t}\notin \sfrac{\mathbb{F}_p}{\thicksim}$ & solvable & 5 & 0  & no \\
\cline{2-6}
& $\overline{t}\in\sfrac{\mathbb{F}_p}{\thicksim}\!\smallsetminus\!\{\overline{0},\overline{1}\}\!$ & solvable & 5 & 0 & yes\\
\cline{2-6}
& $\overline{t}=\overline{0}$  & solvable & 5 & 1 & yes  \\
\cline{2-6}
& $\overline{t}=\overline{1}$  & solvable & 3 & 0 & yes  \\
\hline
\end{tabular}
\end{center}
\end{theorem}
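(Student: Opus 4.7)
The plan is to classify $L$ by the dimension $d = \dim L'$ of the derived subalgebra $L' = [L, L]$, which must be $0, 1, 2,$ or $3$, producing each representative by choosing an adapted basis. The numerical invariants in the table (center dimension, restrictability via Jacobson's Theorem~\ref{Jacobson}, orbit dimension via $9 - \dim \mathrm{Aut}(L)$) are then direct computations from the structure matrices. Non-isomorphism between different rows follows from comparison of these invariants, together with the intrinsic interpretation of $\bar{t}$ modulo inversion as the ratio of eigenvalues of the semisimple adjoint action explained in the paragraph ``More about the family $\mathfrak{l}_T$''.

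Cases $d = 0$ and $d = 1$ are quick. If $d = 0$, $L \cong \fab_3$. If $d = 1$, fix a generator $e$ of $L'$. When $L' \subseteq Z(L)$ the bracket factors through a rank-$1$ alternating form on $L/L'$, so picking $x, y$ lifting a basis with $[x, y] = e$ and setting $z := e$ gives $\fh_3$. Otherwise some $x \in L$ satisfies $[x, e] = e$ after rescaling; set $y := e$. The image of $\ad_x$ is contained in $L'$ and contains $y$, so $\ad_x$ has rank $1$ and $\ker(\ad_x)$ has dimension $2$. Choosing $\tilde{z} \in \ker(\ad_x)$ independent from $x$ and writing $[y, \tilde{z}] = \mu y$, the element $z := \tilde{z} + \mu x$ lies in $\ker(\ad_x)$ and satisfies $[y, z] = 0$, yielding the bracket of $\mathfrak{l}_0$.

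Case $d = 2$: a short Jacobi calculation rules out $L'$ being the two-dimensional nonabelian Lie algebra (which would force $\dim[L,L] \leq 1$), so $L'$ is abelian. Picking $x \notin L'$ and writing $L = kx \oplus L'$ gives $L' = [L, L] = k[x, L']$, so $\ad_x : L' \to L'$ is surjective and hence an isomorphism. Using algebraic closure of $k$, put $\ad_x|_{L'}$ in Jordan normal form. Two distinct eigenvalues, after rescaling $x$ so that one equals $1$, give $\mathfrak{l}_t$ with $t$ the other eigenvalue; the ambiguity $t \leftrightarrow t^{-1}$ reflects the choice of which eigenvalue is normalized. Equal eigenvalues with diagonalizable $\ad_x|_{L'}$ give $\mathfrak{l}_1$, and a nontrivial Jordan block gives $\mathfrak{r}$.

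Case $d = 3$ is the main obstacle. When $L$ is perfect, $Z(L) = 0$ (otherwise $L/Z(L)$ would be a Lie algebra of dimension $1$ or $2$ with nontrivial abelianization, contradicting $L = L'$), so $\ad : L \to \fgl(L)$ is injective. For $p \neq 2$ the Killing form is nondegenerate and the classical argument gives $L \cong \fsl_2 \cong \fs$. The delicate part is characteristic $2$, where the Killing form vanishes identically on $L$ and one must show directly that every perfect $3$-dimensional Lie algebra over algebraically closed $k$ is isomorphic to $W(1,\underline{2})' = \fs$. I would approach this by choosing $x$ with $\ad_x$ of maximal rank, diagonalizing its semisimple part using algebraic closure, and determining the remaining structure constants on the resulting adapted basis using perfection $L = [L, L]$ and Jacobi; alternatively one appeals to the classification in Strade's work cited in the introduction.
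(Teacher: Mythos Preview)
Your overall architecture---stratifying by $d=\dim L'$ and following the Fulton--Harris argument case by case---is exactly what the paper does, and your treatment of $d=0,1,2$ is correct and matches the paper's (which simply cites \cite{FH91} and notes the argument goes through in any characteristic).

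The gap is in the case $d=3$. For $p\ne 2$ you assert that the Killing form is nondegenerate; this is true but not immediate from $L$ being perfect, and proving it amounts to carrying out the Fulton--Harris eigenvalue argument anyway. More importantly, for $p=2$ you only sketch an approach (``diagonalize the semisimple part of $\ad_x$'') or defer to Strade, whereas this is precisely the case where the paper contributes a concrete argument. The paper follows \cite{FH91} \S10.4 to produce $H$ with $\ad_H$ having eigenvalues $0,1,1$; if $\ad_H$ is diagonalizable the Fulton--Harris computation applies verbatim, and if not, the Jordan form gives a basis $\{X,Y,H\}$ with $[H,X]=X$, $[H,Y]=X+Y$. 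Jacobi then forces $[H,[X,Y]]=0$, hence $[X,Y]=\beta H$ with $\beta\ne 0$ by perfection; after rescaling $X,Y$ one has $\beta=1$, and the explicit change of basis $x=X$, $y=X+Y+H$, $z=X+H$ yields the structure constants of $\fs$. This two-paragraph computation is the missing piece in your proposal; the rest of your outline is sound and aligns with the paper.
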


\color{black}
\begin{commentaire}{
As $\GL_3$ is a smooth group scheme, the orbits are also smooth so they are determined by
their underlying topological set. And because everything if of finite type, we will just
describe the \textit{geometric points} of this action. As said before the
classification of the orbits of three-dimensional Lie algebras over $\C$ has
already been done, for example in \cite{J62} or \cite{FH91}. Here we give a uniform description in any characteristic, and we see that we
can always take a representative of the orbits which can be defined over $\Z$,
except for the Lie algebras denoted by $\mathfrak{l}_T$, which
are defined over the polynomial ring $\Z[T]$.  }
\end{commentaire}

\bigskip
We split the proof in three parts: first of all we list the isomorphism
classes (\ref{isoclasses}), then we compute the dimensions of the orbits (\ref{dimorbit}) and
finally we determine the restrictable Lie algebras (\ref{restrictableorbit}). 
\newline

\begin{notitle}{Proof of the statement on isomorphism classes}\label{isoclasses} 
In order to have the list of the different orbits, we are following the
proof in Fulton and Harris's book \cite{FH91}, Chapter~10. In this chapter the
proof is divided in three parts, depending on the dimension of the derived Lie algebra. In
this book though, the classification is done over the ring of complex
numbers. The reader can verify that the proof can be
generalised to any field of characteristic~$\neq 2$, and 
up to a change of basis for the Lie algebra whose 
Lie structure is given by the matrix $$\begin{pmatrix}
0&-2&0\\
0&0&2\\
1&0&0 \end{pmatrix},$$ we find the classification we
claim in the theorem (indeed changing $X$ into $2X$ and $H$ into $2H$,
we find the Lie algebra $\fs$).

\noindent Now let us suppose $\car(k)=2$. The reader
can verify that the proof done in \cite{FH91} can still
be generalised until the {\em loc. cit.} $\S 10.4$,
where the authors consider Lie algebras with derived
Lie algebra of rank $3$. Indeed, in this part,
they use an argument that is no longer true in 
characteristic~$2$: a certain endomorphism denoted by
$\ad_H$ has three eigenvalues: $0, \alpha$ and
$-\alpha$, and because $\alpha\neq 0$, these three
eigenvalues are different, then this endomorphism is
diagonalizable. So now let us transform this argument
in our case. So let $\mathfrak{g}$ be a Lie algebra
over $k$, with derived Lie algebra of rank $3$. Let
us do the same proof as done in {\em loc. cit.} $\S
10.4$ until this argument. Then, changing the
eigenvector $X$ of $\ad_H$ for the eigenvalue
$\alpha$ into $\alpha X$, and changing $H$ in
$\alpha^{-1} H$, we found that $\ad_H$ has $0$ and
$1$ as eigenvalues. If $\ad_H$ is diagonalizable, we
can apply the proof of \cite{FH91}. Otherwise,
we can apply the Jordan–Chevalley decomposition to
$\ad_H$ and so we can suppose there is a basis
$\{X,Y,H\}$ of $\mathfrak{g}$ such
that $[H,X]=X$ and $[H,Y]=X+Y$. Then thanks to the
Jacobi condition, we know that
$$[H,[X,Y]]=[X,[H,Y]]+[Y,[H,X]]=[X,Y]+[X,Y]=0.$$
Then $[X,Y]=\beta H$ with $\beta\neq 0$ because the
derived Lie algebra of $\mathfrak{g}$ is of dimension
$3$. Changing $X$ into $aX$ and $Y$ into $aY$ where
$a^2=\beta^{-1}$, we can suppose $\beta=1$ and using
the matrix notation, we can suppose the bracket of
$\fg$ is given by $$\begin{pmatrix}
0&1&1\\
0&0&1\\
1&0&0 \end{pmatrix}$$ in the basis $\{X,Y,H\}$. Using the basis $x=X,
y=X+Y+H$ and $z=X+H$, we obtain
$$    \begin{cases}
        [x,y]=[X,Y]+[X,H]=H+X=z \\
        [x,z]=[X,H]=X=x\\
        [y,z]=[X,H]+[Y,X]+[Y,H]+[H,X]=[X,Y]+[Y,H]=H+X+Y=y.
\end{cases}$$

Hence we finally find the Lie algebra structure of $\fs$, so we find our classification. 
$\square$
\end{notitle}
\begin{remark}
Here we use the terminology of \cite{FH91} for the
Lie algebras $\mathfrak{l}_t$, in particular for the Lie algebra
$\mathfrak{l}_{-1}$. Actually you can find in the literature (for
example in \cite{KN84}) the terminology
$\mathrm{m}(2)$ for this one. This name is due to the
fact it is the Lie algebra of the group  $\rM(2)$ of
euclidean motions of the plane.
\end{remark}

\bigskip

\begin{notitle}{Proof of the statement on the dimension of the orbits}\label{dimorbit}
\end{notitle}
\noindent From now on, we use the notation $o(l)$ for the orbit of a Lie algebra $l$ under the group $\GL_3$.
In order to find the dimension of the orbits, we can calculate
the dimension of the stabilizer, and use the orbit-stabilizer relation. Let $l$ be a Lie algebra over $k$, i.e. $l \in \rL_{3,k}(k)$. Then the orbit of $l$ is the image of this $k$-morphism:  
\begin{align*}
    \GL_3(k)\longrightarrow \rL_{3,k}(k)\\
    A\mapsto A\cdot l.
\end{align*}

Let $A=\begin{pmatrix}
    a_{1,1}&a_{1,2}&a_{1,3}\\a_{2,1}&a_{2,2}&a_{2,3}\\a_{3,1}&a_{3,2}&a_{3,3}\end{pmatrix}\in \GL_3(k)$ be a matrix in the
    stabilizer of $o(l)$. Then, we write $$[Av,Aw]=A[v,w]$$ for
    the elements of the basis and we can fin the equations for
    the stabilizer. For example let us fix a $t$ in some field $k$ and let us do it for $\mathfrak{l}_t$. We
    obtain theses conditions:
$$\left\{\begin{array}{lll}
        (a_{1,1}a_{2,2}-a_{2,1}a_{1,2})y+t(a_{1,1}a_{3,2}-a_{3,1}a_{1,2})z&=
        a_{1,2}x+a_{2,2}y+a_{3,2}z, \\
        (a_{1,1}a_{2,3}-a_{1,3}a_{2,1})y+t(a_{1,1}a_{3,3}-a_{3,1}a_{1,3})z&
        =t a_{1,3} x+t a_{2,3} y+t a_{3,3} z\\
        (a_{1,2}a_{2,3}-a_{2,2}a_{1,3})y+t(a_{1,2}a_{3,3}-a_{1,3}a_{3,2})z&=0
 \end{array}
 \right.$$

Then for instance if $t=0$, the conditions of the stabilizer are now:\\
$$\left\{
    \begin{array}{lll} a_{1,2}=a_{3,2}=0\text{, } a_{1,1}a_{2,2}=a_{2,2}\\  
    a_{1,1}a_{2,3}-a_{1,3}a_{2,1}=0\\
    a_{2,2} a_{1,3} =0
\end{array}.
\right.$$

But $\det(A)=a_{2,2}a_{3,3}\neq 0$ then $a_{1,3}=0$ and $a_{1,1}=1$, so $a_{2,3}=0$. Hence

$$\Stab(\mathfrak{l}_0)=\left\{A \in \GL_3(k), A=\begin{pmatrix}
    1&0&0\\a_{2,1}&a_{2,2}&0\\a_{3,1}&0&a_{3,3}\end{pmatrix}\right\}.$$
    Then $\dim(\Stab(\mathfrak{l}_0))=4$, so $\dim(o(\mathfrak{l}_0))=5.$
Now let us suppose $t \neq 0$ and $t \neq 1$. Doing the same type of
calculation, we obtain again: 
    $$\Stab(\mathfrak{l}_t)=\left\{A \in \GL_3(k), A=\begin{pmatrix}
    1&0&0\\a_{2,1}&a_{2,2}&0\\a_{3,1}&0&a_{3,3}\end{pmatrix}\right\}.$$
    Then $\dim(\Stab(\mathfrak{l}_t))=4$ and $\dim(o(\mathfrak{l}_t))=5.$

We can do the same calculations for the other orbits in order to
find the announced dimensions. The details are left to the reader. \hfill $\square$

\bigskip

\begin{notitle}{Proof of the statement on the restricted orbits}\label{restrictableorbit} 
Now we can have a look at the restrictable orbits. 
Let us suppose for this section that~$\car(k)=p>0$.
\end{notitle}
\begin{enumerate}
\item On the abelian Lie algebra, $\gamma\equiv0$ is a $p$-mapping. 
    \item  The Lie algebra $\fh_3=\Lie(\rU_3)$ is algebraic, hence restrictable.
    \item The Lie algebra $\fs$ is restrictable if $\car(k)\neq 2$, because then $\fs\simeq \fsl_2$ so it is algebraic. But if $\car(k)=2$, $\fs$ is not restrictable: one can see that $\ad_x^2$ is not a linear combination of $\ad_x$, $\ad_y$ and $\ad_z$, then the condition $(\AL1)$ can not be verified.
    \item Let $l\defeq  \mathfrak{r} $ with basis $\{x,y,z\}$. We have $$\ad_x=\begin{pmatrix} 0 & 0 & 0\\ 0&1&1\\0&0&1 \end{pmatrix}\text{; }
    \ad_y= \begin{pmatrix}0&0&0 \\ -1 &0&0\\0&0&0 \end{pmatrix} \text{ and } \ad_z=\begin{pmatrix} 0&0&0\\-1&0&0\\-1&0&0\end{pmatrix}.$$ Then we have 
   $$(\ad_x)^p=\begin{pmatrix} 0 & 0 & 0\\ 0&1&p\\0&0&1\end{pmatrix}=\begin{pmatrix} 0 & 0 & 0\\ 0&1&0\\0&0&1\end{pmatrix}.$$ Hence $(\ad_x)^p$ is not a linear combination of $\ad_x$, $\ad_y$ and $\ad_z$, so we conclude that $\mathfrak{r}$ is not restrictable.
    \item For the end let $t\in k$ and let us have a look at the Lie algebra $\mathfrak{l}_t$ with basis $\{x,y,z\}$.  We have $$\ad_x=\begin{pmatrix} 0 & 0 & 0\\ 0&1&0\\0&0&t \end{pmatrix} \text{ ; }
    \ad_y= \begin{pmatrix}0&0&0& \\ -1 &0&0\\0&0&0 \end{pmatrix} \text{ and } \ad_z=\begin{pmatrix} 0&0&0\\0&0&0\\-t&0&0\end{pmatrix}.$$ Then we have 
   $$(\ad_x)^p=\begin{pmatrix} 0 & 0 & 0\\ 0&1&0\\0&0&t^p\end{pmatrix} \text{ and } (\ad_y)^p=(\ad_z)^p\equiv 0.$$ Then using Theorem~\ref{Jacobson} (Jacobson's theorem), and the definition of a restrictable Lie algebra,
   we know that $l$ is restrictable if and only if $t^p=t$ i.e. if and only if $t\in\mathbb{F}_p$. \hfill $\square$
\end{enumerate}

\begin{example}
Thanks to this classification, we can illustrate
Theorem~\ref{Yrpz}. Indeed, let $k$ be an algebraically
closed field of characteristic $p>0$. Let $\mathfrak{l}_T\defeq
\Spec(k[T])\rightarrow \rL_3$, given on the rings by
$a,c,d,e,g,h,i \mapsto 0$, $b\mapsto 1$ and $f\mapsto
T$. Let us calculate $\rL_3^{\res}\times {\mathfrak{l}_T}$. Thanks to what we
have done before, we know $$\rL_3^{\res}\times {\mathfrak{l}_T}=\Spec
k[T]/(T^p-T).$$ Then we see that $\rL_3^{\res}\times {\mathfrak{l}_T}$ is closed in
$\Spec(k[T])$.
\end{example}

\begin{notitle}{First consequences for the topology of $\rL_{3,k}$}
\end{notitle}
To finish this subsection, we derive the first topological description of the irreducible components of the moduli space
that the classification just given affords. Finer information
can only be obtained with the more advanced algebraic tools
of liaison theory presented in Subsection~\ref{algebraic_treatment}.
First, note that:
\begin{itemize}
\item the points corresponding to the Lie algebras $\fab_3$
and $\fh_3$ are in the closure of the orbit of the simple
algebra $\fs$;
\item the point corresponding to the Lie algebras $\mathfrak{r}$
is in the closure of the orbit of the 1-parameter algebra
$\mathfrak{l}_T$ (to see this, let $k$ be a field and let $t\in k$, and consider the Lie algebra defined by
the structure matrix
$\left(\begin{smallmatrix} 0&0&0\\ 1&1&0\\0&t&0 \end{smallmatrix}\right)$.
For $t\ne 1$, the structure constants of this algebra in the
basis $\{x,y,y+(t-1)z\}$ are those of $\mathfrak{l}_t$ and when $t\to 1$
the limit of this family is $\mathfrak{r}$).
\end{itemize}
Therefore, in order to single out the irreducible components
of $\rL_{3,k}$ it is enough to look at~$o(\fs)$ and $o(\mathfrak{l}_T)$.
We consider their orbit morphisms :
\[
\ev_{\fs}:\GL_3\times\Spec(\Z)\longrightarrow \rL_{3}
\quad,\quad
\ev_{\mathfrak{l}_T}:\GL_3\times\Spec(\Z[T])\longrightarrow \rL_{3}.
\]
We obtain the following result.

\begin{lemma}\label{topologyirr}
In each geometric fiber over a point $\Spec(k)\to\Spec(\Z)$, the following hold:
$\ev_{\fs}$ and $\ev_{\mathfrak{l}_T}$ have 6-dimensional image,
their sum $\GL_{3,k}\amalg\GL_{3,k[T]}\to \rL_{3,k}$ is dominant,
and $\rL_{3,k}$ has pure dimension 6 with two irreducible
components.
\end{lemma}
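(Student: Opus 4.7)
My plan is to read the topology of $\rL_{3,k}$ off the classification of Theorem \ref{touteslesinfos} together with the two degeneration observations immediately preceding the lemma, then distinguish the two components using the invariant $\rk(L')$. By the classification, the geometric points of $\rL_{3,k}$ are partitioned into the $\GL_{3,k}$-orbits of $\fab_3,\fh_3,\mathfrak{r},\fs$ and $\mathfrak{l}_t$ for $t\in k$, with the orbit dimensions recorded in the table.

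I would first compute the dimensions of the two images. The image of $\ev_{\fs}$ is $o(\fs)$, of dimension $6$. For $\ev_{\mathfrak{l}_T}$, the source $\GL_{3,k}\times\Spec(k[T])$ has dimension $10$, the orbit of $\mathfrak{l}_t$ has dimension $5$ for generic $t$, and two parameters $t,t'$ give isomorphic algebras only when $t\sim t^{-1}$, so the image is a genuine one-parameter family of $5$-dimensional orbits, hence of dimension $6$. Alternatively, upper-bound the image dimension by $10-\dim\Stab(\mathfrak{l}_t)=10-4=6$ using the computation in \ref{dimorbit}. For dominance: $o(\fs)$ and each $o(\mathfrak{l}_t)$ lie directly in $\mathrm{im}(\ev_{\fs})$ and $\mathrm{im}(\ev_{\mathfrak{l}_T})$; $o(\fab_3)$ and $o(\fh_3)$ lie in $\overline{o(\fs)}$, and $o(\mathfrak{r})$ lies in $\overline{\mathrm{im}(\ev_{\mathfrak{l}_T})}$, by the two observations preceding the lemma. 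Hence $\rL_{3,k}=\overline{o(\fs)}\cup\overline{\mathrm{im}(\ev_{\mathfrak{l}_T})}$, which is dominance of the sum map (in fact surjectivity on geometric points).

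The last step is to identify the irreducible components. Each of the two closures is irreducible, as the closure of the image of an irreducible scheme, and each has dimension $6$; since $\rL_{3,k}$ is their union, every irreducible component is contained in one of them, so $\rL_{3,k}$ has pure dimension $6$ and at most two irreducible components. The main obstacle is to verify that the two closures are genuinely distinct. For this I would use that the locus $Z:=\{L:\rk(L')\le 2\}\subset\rL_{3,k}$ is closed, being cut out by the vanishing of $3\times 3$ minors of the matrix of the bracket, and that it contains every $\mathfrak{l}_t$ (since the columns of the structure matrix of $\mathfrak{l}_t$ span a subspace of rank at most $2$). Hence $Z\supseteq\overline{\mathrm{im}(\ev_{\mathfrak{l}_T})}$. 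On the other hand, $\rk(\fs')=3$ in every characteristic, since the structure matrix of $\fs$ has determinant $\pm 1$; so $\fs\notin Z$, and in particular $\fs\notin\overline{\mathrm{im}(\ev_{\mathfrak{l}_T})}$. This gives $\overline{o(\fs)}\not\subseteq\overline{\mathrm{im}(\ev_{\mathfrak{l}_T})}$, and since both closures are irreducible of the same dimension, neither can properly contain the other; they are therefore the two distinct irreducible components of $\rL_{3,k}$.
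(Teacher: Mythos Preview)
Your proof is correct and follows essentially the same approach as the paper: compute the dimensions of the two images via stabilizers, obtain dominance from the two degeneration observations preceding the lemma, and conclude that the two closures are the irreducible components. The one place where you do more than the paper is the distinctness of the two closures: the paper simply asserts that the images are ``distinct'' and moves on, whereas you supply a clean closed invariant (the locus $\rk(L')\le 2$, cut out by the determinant of the structure matrix) separating $\fs$ from every $\mathfrak{l}_t$. This makes the final step more transparent but does not change the overall strategy.
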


\begin{proof}
Everything takes place in the fiber over $\Spec(k)\to\Spec(\Z)$
so for simplicity we omit~$k$ from the notation. The stabilizer
of $\fs$ has
dimension~3, hence its orbit (the image of $\ev_{\fs}$) has
dimension~6. For the orbit of $\mathfrak{l}_T$ we may as well remove the
value $t=1$ without changing the dimension. Then the stabilizer of
$\mathfrak{l}_T$ is flat, of dimension~2 over $\Spec(k[T,(T-1)^{-1}]$, hence it has
dimension~3 over $k$, and again the orbit (the image of
$\ev_{\mathfrak{l}_T}$) has dimension~6. The fact that
$\GL_{3,k}\amalg\GL_{3,k[T]}\to \rL_{3,k}$ is dominant follows
from the fact that the remaining orbits lie in the closure of
those two, as we indicated before the lemma. Finally since both
images of $\ev_{\fs}$ and $\ev_{\mathfrak{l}_T}$ are distinct, irreducible,
of dimension~6, their closures are the irreducible components
of~$\rL_{3,k}$.
\end{proof}

\subsection{Schematic description of the moduli space \texorpdfstring{$\rL_3$}{}}\label{algebraic_treatment}\label{subsection4.2}

Let us now focus on the schematic structure of
the moduli space of three-dimensional Lie algebras. We first
prove the representability of the functor $\rL_3$ over the ring of integers.

\begin{definition}\label{D16}
The \textit{moduli space of based Lie algebras of rank three} is the following functor: 
\begin{align*}
    \rL_3:  \Sch &\longrightarrow \Set\\
     T &\longmapsto   \big\{[\cdot,\cdot]: \mathcal{O}_T^3\otimes \mathcal{O}_T^3\rightarrow \mathcal{O}_T^3 \text{ ; where } [\cdot,\cdot] \text{ is a Lie bracket}\big\}.
\end{align*}
\end{definition}


\begin{proposition}\label{P7}
This functor is representable by a closed subscheme of $\mathbb{A}_\Z^9$, given by
$$\displaystyle\Spec\big(\Z[a,b,c,d,e,f,g,h,i]\big/\langle ah+di-fg-bg,ie+bd-fh-ae,hc+dc-af-bi\rangle\big).$$
\end{proposition}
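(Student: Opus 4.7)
The plan proceeds in three logical steps. First, I would unwind the data of a Lie bracket on the trivial rank-three bundle $\mathcal{O}_T^3$. An $\mathcal{O}_T$-bilinear alternating map $[\cdot,\cdot]:\mathcal{O}_T^3\otimes\mathcal{O}_T^3\to\mathcal{O}_T^3$ is the same as an $\mathcal{O}_T$-linear map $\Lambda^2\mathcal{O}_T^3\to\mathcal{O}_T^3$, which, by freeness of both source and target, amounts to choosing the three vectors $[x,y]$, $[x,z]$, $[y,z]$ in $\mathcal{O}_T^3$ (where $x,y,z$ denote the standard basis of $\mathcal{O}_T^3$). Parametrising each of these vectors by its three coordinates produces the nine indeterminates $a,\dots,i$ of the statement. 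Hence the functor sending $T$ to the set of alternating bilinear brackets on $\mathcal{O}_T^3$ is representable by $\mathbb{A}^9_{\Z}=\Spec\Z[a,b,c,d,e,f,g,h,i]$.

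Second, I would cut out the Jacobi locus. The Jacobiator
\[
J(u,v,w):=[[u,v],w]+[[v,w],u]+[[w,u],v]
\]
is trilinear and alternating in $(u,v,w)$ (the trilinearity is immediate, and alternation follows from the cyclic symmetry of the sum combined with the antisymmetry of the bracket). Because $\mathcal{O}_T^3$ is free of rank three, the module of alternating trilinear maps $(\mathcal{O}_T^3)^{\otimes 3}\to\mathcal{O}_T^3$ is free of rank $3$, determined by the single value on the triple $(x,y,z)$. Consequently the vanishing of $J$ on all triples is equivalent to the single vector equation $J(x,y,z)=0$, which in coordinates amounts to exactly three scalar equations.

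Finally, I would compute $J(x,y,z)$ universally using the defining relations $[x,y]=ax+by+cz$, $[x,z]=dx+ey+fz$, $[y,z]=gx+hy+iz$ (and their sign-reversed counterparts), then read off the coefficients of $x$, $y$, $z$ in the sum. Regrouping yields three polynomials in $\Z[a,b,c,d,e,f,g,h,i]$, which up to an overall sign are exactly $ah+di-fg-bg$, $ie+bd-fh-ae$, and $hc+dc-af-bi$. These generate the ideal displayed in the proposition, so $\rL_3$ is representable by the announced closed subscheme of $\mathbb{A}^9_{\Z}$. The only mild pitfall is sign bookkeeping in the antisymmetric expansions; there is no conceptual obstacle.
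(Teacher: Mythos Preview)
Your proposal is correct and follows essentially the same approach as the paper: parametrise alternating brackets by nine structure constants and impose the Jacobi identity on the basis triple. Your explicit remark that the Jacobiator is trilinear and alternating (hence determined by its value on $(x,y,z)$) is a helpful justification that the paper leaves implicit under ``one can easily verify.''
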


\begin{proof}
Let $T$ be a scheme, and let $\{x,y,z\}$ be a
$\mathcal{O}_T(T)$-basis of $\mathcal{O}_T(T)^3$.\\
Let us write $(a,b,c,d,e,f,g,h,i)~\in~\mathcal{O}_
T(T)^9$ for the coefficients of the Lie bracket
$[\cdot,\cdot]$, where $$[x,y]=ax+by+cz \text{ ,
}[x,z]=dx+ey+fz \text{
and } [y,z]=gx+hy+iz.$$ Then by definition, we
have: 
\begin{align*}
\rL_3(T)
&=\big\{[\cdot,\cdot]: \mathcal{O}_T^3\times \mathcal{O}_T^3\rightarrow \mathcal{O}_T^3 \text{ , where } [\cdot,\cdot] \text{ is a Lie bracket}\big\}\\
& \simeq \big\{(a,\dots,i)\in \mathcal{O}_T(T)^9 \text{ ; } ah+di-fg-bg=ie+bd-fh-ae= hc+dc-af-bi=0\big\}.
\end{align*}
One can easily verify that the conditions on the $9$-tuple correspond to the Jacobi condition.
\end{proof}

\noindent \textbf{Notations: }From now on, we will use the following notations:
\begin{itemize}
   \item[-] $Q_1\defeq ah+di-fg-bg\text{, }Q_2\defeq ie+bd-fh-ae \text{ and } Q_3\defeq hc+dc-af-bi$ and 
 $R_3\defeq \Z[a,b,c,d,e,f,g,h,i]\big/(Q_1,Q_2,Q_3)$, hence $\rL_3=\Spec(R_3)$. For any ring $A$, we write $R_{3,A}$ for $R_3\otimes A$.
    \item[-] Let us remark that the Jacobi condition can be written as
$$\left\{\begin{array}{rcl}
Q_1=ah+di-fg-bg=0 \\Q_2=ie+bd-fh-ae=0 \\Q_3=hc+dc-af-bi=0
\end{array}
\right. \Leftrightarrow 
\left \{
\begin{array}{rcl}
(a-i)h+(b+f)(-g)+(d+h)i&=&0\\
(a-i)(-e)+(b+f)(-h)+(d+h)b&=&0\\
(a-i)(-f)+(b+f)(-i)+(d+h)c&=&0.
\end{array}
\right.$$

Then let us denote $$M \defeq\begin{pmatrix}
h & -g&i\\ -e&-h&b\\-f&-i&c\end{pmatrix} \text{ and } X\defeq\begin{pmatrix} L_1\defeq a-i\\L_2\defeq b+f\\ L_3\defeq d+h \end{pmatrix}.$$ 

Then, the Jacobi condition is verified if and only if $M X=0$.

Actually here, we find again the underlying set of the
irreducible components of $\rL_3$ that we saw in Lemma \ref{topologyirr}. Indeed,
$MX=0$ if and only if $X=0$ or $X\neq 0$ so the matrix $M$ is not injective,
i.e. $\det(M)=0$ and there exists a nonzero vector in its kernel.
    \item[-] For these reasons, we finally set $$L\defeq (L_1,L_2,L_3) 
    \text{, }I\defeq (Q_1,Q_2,Q_3) \text{ and }
    J=(Q_1,Q_2,Q_3,\det(M))=I+(\det(M)),$$ and we will see that the two
    irreducible components are given, as schemes, by the ideals $L$ and $J$, and we 
    will give a more precise description of them. When it is clear from the context, we will still write $I,J$ and $L$ for those ideals seen in $R_{3,A}$ for any ring $A$.
\end{itemize}

\begin{notitle}{Description of the irreducible components}
\end{notitle}

\begin{theorem}
The affine scheme $\rL_3$ can be decomposed in two irreducible components: the
first one is $$\rL_3^{(1)}\defeq
\Spec\big(\Z[a,\dots,i]/L \big) \simeq \mathbb{A}^6$$ and
the second one is $$\rL_3^{(2)}\defeq
\displaystyle\Spec\big(\Z[a,\dots,i]/ J \big).$$
These irreducible components are linked to each other, they are both
Cohen-Macaulay, flat over $\Z$ with integral geometric fibers of dimension
$6$.
\end{theorem}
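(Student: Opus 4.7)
The plan is to exploit the matrix reformulation $Q = MX$ with $X = (L_1,L_2,L_3)^\top$, applying the liaison theory of Peskine–Szpiro \cite{PS74} to link $L$ and $J$ through the Jacobi ideal $I$.

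First, I would establish that $(Q_1, Q_2, Q_3)$ is a regular sequence in $\Z[a,\ldots,i]$. By Lemma~\ref{topologyirr}, each geometric fiber of $\rL_3\to\Spec(\Z)$ has pure dimension $6$, so $I$ has height exactly $3$ in the Cohen–Macaulay ambient ring; height equal to the number of generators forces the regular-sequence property. This yields that $R_3$ is a complete intersection, Cohen–Macaulay of relative dimension $6$, and flat over $\Z$ by miracle flatness. The first component is then immediate: $L = (a-i,\,b+f,\,d+h)$ is a linear complete intersection, so $\rL_3^{(1)}\simeq\mathbb{A}^6_\Z$, which is trivially Cohen–Macaulay, flat, and has integral geometric fibers; the inclusion $I\subset L$ is checked by substituting $i\mapsto a$, $f\mapsto -b$, $h\mapsto -d$, which kills each $Q_j$.

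Second, I would prove the linkage identity $I:L = J$. The inclusion $J\subset I:L$ is Cramer's rule: $\det(M)\cdot X = \mathrm{adj}(M)\cdot MX = \mathrm{adj}(M)\cdot Q$ lies componentwise in $I$, so $\det M\in I:L$. The reverse inclusion is the classical linkage formula: given a matrix relation $Q = ML$ where $L_1, L_2, L_3$ is a regular sequence in a Cohen–Macaulay ring, one has $(Q):(L) = (Q,\det M)$. This is proved by combining the Koszul resolution of $R/L$ (whose only syzygies on $L$ are the Koszul syzygies of a regular sequence) with the adjugate manipulation on the matrix $fI_3 - GM$ arising from $fL = GML$, forcing $\det(fI_3-GM) = 0$ and then placing $f$ into $(Q,\det M)$ by a Cayley–Hamilton bookkeeping. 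With this identity in place, the theorem of Peskine–Szpiro \cite{PS74} grants that $R_3^{(2)} = R/J$ is Cohen–Macaulay of height $3$, that $I = L\cap J$ is the primary decomposition, and that $L$ and $J$ are linked through the complete intersection $I$; flatness of $\rL_3^{(2)}$ over $\Z$ then follows from Cohen–Macaulayness with constant fiber dimension over the regular base $\Z$ (miracle flatness).

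Third, for integrality of geometric fibers of $\rL_3^{(2)}$, Lemma~\ref{topologyirr} identifies $V(J)\otimes k$ as the closure of the six-dimensional orbit $o(\mathfrak{l}_T)$, which is irreducible as the image of an irreducible scheme and generically reduced under the smooth $\GL_3$-action. Combined with the Cohen–Macaulayness obtained above (Cohen–Macaulay plus generically reduced implies reduced), this gives integrality.

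The main obstacle is the reverse inclusion $I:L\subset J$ in the linkage identity: the direction $J\subset I:L$ is an elementary Cramer computation, but the reverse requires the classical (and nontrivial) linkage formula for ideals of the form $(ML)\subset (L)$, and it is crucial that the specific $M$ arising from the Jacobi relations be nondegenerate enough—witnessed by the direct check that $\det M\not\equiv 0\pmod L$, which ensures the generic point of $V(L)$ is not in $V(\det M)$. Everything else follows either routinely or as a direct consequence of the standard liaison machinery applied to the geometric input of Lemma~\ref{topologyirr}.
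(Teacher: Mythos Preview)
Your approach is essentially the paper's: exploit the matrix identity $Q = MX$ and apply Peskine--Szpiro liaison to the pair $(L,J)$ through the complete intersection $I$. The paper proceeds in the same order (regularity of the sequences, $[I:L] = J$, Cohen--Macaulayness of $R/J$ from linkage, integrality of fibers from irreducibility plus generic reducedness under the $\GL_3$-action), with minor executional differences: regularity of $(Q_1,Q_2,Q_3)$ is checked directly via leading coefficients rather than by a height count, and flatness of $\rL_3^{(2)}$ over $\Z$ is obtained from the explicit free resolution produced by the mapping cone rather than by miracle flatness. Your miracle-flatness shortcut is valid and arguably cleaner.

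The one place where your sketch is genuinely incomplete is the reverse inclusion $I:L \subset J$. The Cayley--Hamilton manipulation you outline does yield $\det(fI_3 - GM) = 0$ from $(fI_3 - GM)L = 0$ together with the fact that the Koszul syzygy matrix on a length-3 regular sequence has vanishing determinant; but expanding $\det(fI_3-GM)$ gives a cubic in $f$ with constant term $\det G \cdot \det M$, and it is not clear how to extract $f \in (Q,\det M)$ from this relation alone. The paper instead computes $[I:L]$ via the mapping cone of the morphism of Koszul complexes $K_\bullet(Q)\to K_\bullet(L)$ induced by $M^{\rt}$ (this is exactly Proposition~2.6 of \cite{PS74}): after dualizing and trimming one split term, the cokernel of the last differential is visibly $R/(Q_1,Q_2,Q_3,\det M)$, which identifies $[I:L]$ with $J$. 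You should either replace the Cayley--Hamilton sketch with this mapping-cone computation or supply the missing algebra. Note also that $I = L\cap J$ is not a purely formal consequence of linkage; the paper verifies it separately by localizing at the associated primes of $I$ and using the Cramer relation $\det(M)\cdot L_i \in I$.
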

Let $A$ be any regular ring (for the following we will use $A=\Z$, $A=\Q$ or $A=\mathbb{F}_p$). We have~$R_{3,A}/\left(L\otimes A\right)\simeq A[a,b,c,d,e,g]$ so $L\otimes A$ is prime in
$\rL_{3,A}$. Let us show that the ideal $L$ describes an irreducible component of $\rL_{3,A}$. Let us denote $D\defeq A[a,\dots i]$.

\begin{lemma}\label{minimalprime}
The ideal $L$ is minimal in $D$ among the prime ideals containing $I$.
\end{lemma}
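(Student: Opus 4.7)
The plan is to establish three things in sequence: the containment $I\subseteq L$, primality of $L$, and minimality of $L$ over $I$ via a localization argument. The first two steps are essentially formal; the real work is in the third.

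For the containment $I\subseteq L$, I would invoke directly the matrix identity $MX=\bigl(\begin{smallmatrix}Q_1\\Q_2\\Q_3\end{smallmatrix}\bigr)$ with $X=(L_1,L_2,L_3)^T$ that is recorded in the excerpt just before the lemma. This writes each $Q_i$ as a $D$-linear combination of $L_1,L_2,L_3$, so $I\subseteq L$. For primality of $L$, after eliminating the variables $i,f,h$ using the relations $L_1=L_2=L_3=0$, we have $D/L\simeq A[a,b,c,d,e,g]$, which is a polynomial ring over an integral domain and hence a domain.

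The heart of the proof is minimality. My plan is to localize at the prime $L$ and show $ID_L=LD_L$, which forces $LD_L$ to be the only prime of $D_L$ containing $ID_L$, hence $L$ to be the only prime of $D$ containing $I$ and contained in $L$. The key input is that $\det(M)$ does not lie in $L$: substituting $i=a$, $f=-b$, $h=-d$ into $M$ turns it into
\[
\begin{pmatrix} -d & -g & a\\ -e & d & b\\ b & -a & c\end{pmatrix},
\]
whose determinant computes to a concrete polynomial in $A[a,b,c,d,e,g]$ (one gets something like $a^{2}e-d^{2}c-egc-gb^{2}-2abd$) that is visibly nonzero in every characteristic, including $p=2$ where the terms involving $2$ drop out but the others survive. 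Hence $\det(M)\notin L$, so $\det(M)$ is a unit in $D_L$. The identity $MX=Q$ then reads, modulo $ID_L$, as $MX\equiv 0$ with $M$ invertible, forcing $X\equiv 0$ in $D_L/ID_L$. Thus $L_1,L_2,L_3\in ID_L$, giving $LD_L\subseteq ID_L$ and hence $ID_L=LD_L$, as desired.

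The main obstacle is the computation that $\det(M)\notin L$: it is a bare-hands polynomial computation, the only one in the proof, and one needs to verify that the resulting polynomial is nonzero in $A[a,b,c,d,e,g]$ in every characteristic relevant to the paper. A cleaner alternative, which I would mention as a remark, is to exhibit a single $A$-point of $V(L)$ at which $M$ is invertible (for example $a=i=c=1$, $b=f=d=h=0$, $e=g=0$ makes $M$ permutation-like with nonzero determinant), which suffices to conclude $\det(M)\notin L$ since $L$ is prime and $D/L$ is a polynomial ring.
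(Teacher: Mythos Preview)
Your argument is correct and essentially identical to the paper's: both hinge on the adjugate relation $\det(M)\,X=\operatorname{adj}(M)\,Q$ together with the observation that $\det(M)\notin L$, the only cosmetic difference being that you phrase minimality via the localization $ID_L=LD_L$ whereas the paper takes a prime $\mathbf p$ with $I\subset\mathbf p\subset L$ and deduces $L_i\in\mathbf p$ directly from $\det(M)L_i\in I$ and $\det(M)\notin\mathbf p$. One small slip in your closing remark: at the point $a=i=c=1$, $b=d=e=f=g=h=0$ the first column of $M$ is $(h,-e,-f)=(0,0,0)$, so $\det(M)=0$; a working witness is for instance $a=i=e=1$ with all other variables zero, which makes $M$ a signed permutation matrix with $\det(M)=1$.
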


\begin{proof}
Let $\textbf{p} \in \Spec(D)$ be such that $I\subset \textbf{p} \subset L$. First of all, because we have $$M\begin{pmatrix} L_1\\L_2\\L_3 \end{pmatrix}=\begin{pmatrix} Q_1\\Q_2\\Q_3 \end{pmatrix},$$ we obtain
$$\det(M)\begin{pmatrix} L_1\\L_2\\L_3 \end{pmatrix}=(\com(M))^\rt\begin{pmatrix} Q_1\\Q_2\\Q_3 \end{pmatrix}.$$
So we can write $$ (\ast) \left\{
    \begin{array}{lll} \det(M)L_1=(-hc+bi)Q_1+(gc-i^2)Q_2+(-gb+ih)Q_3 \\
\det(M)L_2=(ec-bf)Q_1+(hc+if)Q_2+(-ie-bh)Q_3 \\
\det(M)L_3=(ei-hf)Q_1+(fg+ih)Q_2+(-h^2-ge)Q_3\\
\end{array}.
\right.$$
But $\det(M)=-ch^2+gbf+ei^2-gec+hbi-ihf$ then $\det(M) \notin L$, so $\det(M) \notin \textbf{p}$. Thanks to $(\ast)$, this means that $L_1,L_2,L_3 \in \textbf{p}$, i.e. $\textbf{p}=L$. So $L$ is a minimal prime among the prime ideals containing $I$.
\end{proof}

So now we need to show that $J$ also describes schematically an irreducible component of
$\rL_{3,A}$. In order to do this, we use liaison theory.

\begin{definition}
Let $J$ and $L$ be two ideals in a ring $R$. We say that $J$ and $L$ are \textit{linked} in $R$ by an ideal $I$ if $L=[I:J]$ and $J=[I:L]$.
\end{definition}

\begin{lemma}\label{regular}
The sequences $(L_1,L_2,L_3)$ and $(Q_1,Q_2,Q_3)$ are regular in $A$. 
\end{lemma}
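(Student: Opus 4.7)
The sequence $(L_1,L_2,L_3)=(a-i,\,b+f,\,d+h)$ is immediate to handle: each $L_j$ contains a variable ($i$, $f$, $h$ respectively) that does not occur in the previous ones, so successive quotients
$D\to D/(L_1)\to D/(L_1,L_2)\to D/(L_1,L_2,L_3)$
are obtained by eliminating $i$, then $f$, then $h$, and remain polynomial rings over $A$. In particular they are domains, so each $L_{j+1}$ is a nonzerodivisor on the previous quotient.

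For $(Q_1,Q_2,Q_3)$ the plan is to invoke the Cohen--Macaulay criterion: if $R$ is CM Noetherian and $f_1,\dots,f_n\in R$ generate an ideal of height $\geq n$, then $(f_1,\dots,f_n)$ is regular. Since $A$ is regular, $D=A[a,b,c,d,e,f,g,h,i]$ is CM, so it suffices to show $\mathrm{ht}(I)=3$ for $I=(Q_1,Q_2,Q_3)$. The upper bound is free: the identity $MX=(Q_1,Q_2,Q_3)^t$ proves $I\subseteq L$, and $L$ is prime of height $3$ because $D/L\simeq A[a,b,c,d,e,g]$ (this is also essentially Lemma~\ref{minimalprime}).

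For the lower bound I would reduce to geometric fibers over $\Spec(A)$. For any algebraically closed field $k$ and any prime $\mathfrak{p}\subset A$ with residue field contained in $k$, Lemma~\ref{topologyirr} tells us that $V(I)\otimes k=\rL_{3,k}$ is equidimensional of dimension $6$, i.e.\ $\mathrm{ht}(I\otimes k)=9-6=3$ in $D\otimes k$. Combined with the upper bound $I\subseteq L$, this forces every minimal prime of $I\otimes k$ to have height exactly $3$, so $V(I)\to\Spec(A)$ has equidimensional fibers of relative dimension $6$. Standard fiber-dimension results (e.g.\ \cite{GW20}, or \cite{EGA4} for the ``critère de platitude par fibres'' already invoked in this paper) then yield $\mathrm{ht}(I)=3$ in $D$ itself.

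The main obstacle is the lower bound on $\mathrm{ht}(I)$: everything else reduces to linear algebra or to the general CM mechanism. The crux is knowing that $V(I)$ contains no spurious component of codimension smaller than $3$, and this is exactly what the characteristic-free classification performed in Subsection~\ref{classification} provides, through the two $6$-dimensional orbits $o(\fs)$ and $o(\mathfrak{l}_T)$ whose closures already exhaust $\rL_{3,k}$. Once the fiberwise height is under control, the passage from fibers to $D$ is routine.
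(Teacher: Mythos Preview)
Your argument is correct, but it takes a genuinely different route from the paper's. For $(L_1,L_2,L_3)$ you do the same thing as the paper (successive elimination of variables). For $(Q_1,Q_2,Q_3)$, however, the paper gives a completely elementary and self-contained argument: it observes that the variable $g$ occurs only in $Q_1$, the variable $e$ only in $Q_2$, and the variable $c$ only in $Q_3$, in each case with leading coefficient $-(b+f)$, $(i-a)$, $(h+d)$ respectively (i.e.\ $\pm L_2,\pm L_1,\pm L_3$). Writing $C=A[a,b,d,f,h,i]$ and viewing $D=C[c,e,g]$, one then argues inductively that each $Q_j$ is a polynomial in its ``private'' variable with regular leading coefficient over the previous quotient, hence a nonzerodivisor. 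This needs nothing beyond the fact that a polynomial with regular leading coefficient is regular.

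Your approach instead invokes the Cohen--Macaulay machinery: $D$ is CM, so it suffices to show $\mathrm{ht}(I)=3$, and you get this from Krull's bound plus Lemma~\ref{topologyirr} (pure $6$-dimensionality of the geometric fibers). This is logically fine---Lemma~\ref{topologyirr} is proved earlier and independently---but it is heavier: it imports the entire classification of $3$-dimensional Lie algebras just to bound a height, and the passage from fiberwise height to height in $D$ needs the dimension formula for flat maps. The paper's argument avoids all of this. On the other hand, your method is more robust: it would work verbatim for any set of generators of $I$, whereas the paper's trick depends on the lucky accident that the variables $c,e,g$ separate the three quadrics.
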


\begin{proof}
It is trivial for $(L_1,L_2,L_3)$. For $(Q_1,Q_2,Q_3)$, let us remark that for any ring $R$, any polynomial in $R[X]$ whose leading coefficient is regular, is regular.
But, the variable $g$ appears only in $Q_1$, the variable $e$ appears only in $Q_2$ and the variable $c$ appears only in $Q_3$. Moreover, all of them appear with a regular coefficient.  
Then let us set $C\defeq B[a,b,d,f,h,i]$. Then,
\begin{itemize}
    \item $Q_1 \in C[c,e][g]$ seen as a polynomial in $g$ has a regular leading coefficient, hence is regular
    \item $Q_2 \in (C[g]/(Q_1))[c,e]$ seen as a polynomial in $e$ has a regular leading coefficient, hence is regular
    \item $Q_3 \in (C[g,e]/(Q_1,Q_2))[c]$ seen as a polynomial in $c$ has a regular leading coefficient, hence is regular.
\end{itemize}
\end{proof}

\begin{corollary}\label{koszulcomplexmap} 
Let us denote by $M^\rt$ the multiplication by the matrix $M^\rt:D^3\rightarrow D^3$. 
The two regular sequences $\{L_1,L_2,L_3\}$ and $\{Q_1,Q_2,Q_3\}$ define two Koszul complexes and we have a morphism of Koszul complexes between them:

\begin{center}

\begin{tikzpicture}
  \matrix (m) [matrix of math nodes, row sep=3em, column sep=3em]
    { 0 & D  &  D[2]^3  & D[4]^3  & D[6] & 0 \\
      0 & D[3]  & D[4]^3  & D[5]^3  & D[6] & 0 . \\ };
  { [start chain] \chainin (m-1-1);
    \chainin (m-1-2);
    { [start branch=A] \chainin (m-2-2)
        [join={node[right] {$\det(M^\rt)$}}];}
    \chainin (m-1-3) [join={node[above] {$d^Q_3$}}];
    { [start branch=B] \chainin (m-2-3)
        [join={node[right] {$\wedge^2M^\rt$}}];}
    \chainin (m-1-4) [join={node[above] {$d^Q_2$}}];
    { [start branch=C] \chainin (m-2-4)
        [join={node[right] {$M^\rt$}}];}
    \chainin (m-1-5) [join={node[above] {$d^Q_1$}}] ; 
    { [start branch=D] \chainin (m-2-5)
        [join={node[right] {$\id$}}];}
        \chainin (m-1-6) ;
        }
  { [start chain] \chainin (m-2-1);
    \chainin (m-2-2);
    \chainin (m-2-3) [join={node[above] {$d^L_3$}}];
    \chainin (m-2-4) [join={node[above] {$d^L_2$}}];
    \chainin (m-2-5)[join={node[above] {$d^L_1$}}];
    \chainin (m-2-6);}
\end{tikzpicture}.
\end{center}
Here: 
$D[n]$ is the graded ring $D$ where we shift the graduation $n$ times, in order to have a morphism of graded
rings (i.e. a polynomial of degree $d$ in $D$ is seen
in the ($d-n$)-th graduation of $D[n]$). 

\end{corollary}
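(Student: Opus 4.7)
The plan is to verify this corollary in two stages: establish the existence of the two Koszul complexes, then construct and check the vertical morphism.

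Existence is immediate from Lemma~\ref{regular}. Since $(L_1,L_2,L_3)$ and $(Q_1,Q_2,Q_3)$ are regular sequences in $D$, their Koszul complexes are well-defined free resolutions of $D/(L_1,L_2,L_3)$ and $D/(Q_1,Q_2,Q_3)$ respectively, with the grading shifts $D[k]$ displayed in the diagram reflecting $\deg(L_i)=1$ and $\deg(Q_i)=2$.

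For the morphism of complexes, the key input is the matrix identity $M\cdot(L_1,L_2,L_3)^\rt = (Q_1,Q_2,Q_3)^\rt$ established earlier, equivalently $Q_i = \sum_j M_{ij}L_j$. My plan is to define $\phi_\bullet : K_\bullet(Q_1,Q_2,Q_3) \to K_\bullet(L_1,L_2,L_3)$ by taking the successive exterior powers of $M^\rt$: at level~$1$, the matrix $M^\rt$ itself, sending the standard basis vector $e_i \in D^3$ to $\sum_j M_{ij}\,e_j$; at level~$2$, the exterior square $\wedge^2 M^\rt$; at the top level, $\wedge^3 M^\rt = \det(M^\rt) = \det(M)$; and the identity at the augmentation end. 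This is the standard comparison morphism associated to a matrix identity between two regular sequences of the same length.

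Commutativity of the rightmost non-trivial square is a direct rewriting of the matrix identity:
\[
d^L_1(\phi_1(e_i)) = d^L_1\bigl(\textstyle\sum_j M_{ij}\,e_j\bigr) = \sum_j M_{ij}L_j = Q_i = d^Q_1(e_i).
\]
Commutativity of the remaining squares follows from the general algebraic fact that exterior powers of a map intertwine the Koszul differentials, which is a routine sign-tracking computation using the Koszul formula $d(e_{i_1}\wedge\dots\wedge e_{i_k}) = \sum_j (-1)^{j-1} f_{i_j}\, e_{i_1}\wedge\dots\wedge\hat{e}_{i_j}\wedge\dots\wedge e_{i_k}$. I do not anticipate any real obstacle: the entire content is encoded in the matrix identity $MX = (Q_1,Q_2,Q_3)^\rt$, and the exterior-power formalism propagates it automatically through the whole complex. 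The only slightly fiddly point is verifying that each $\phi_k$ respects the indicated graded shifts, which is forced by the entries of $M$ being linear in $D$ (so that $\phi_1 = M^\rt$ raises degree by~$1$, $\wedge^2 M^\rt$ raises degree by~$2$, and $\det(M)$ raises degree by~$3$).
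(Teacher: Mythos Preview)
Your proposal is correct and takes essentially the same approach as the paper: both invoke the functoriality of the Koszul complex, building the vertical morphism as the successive exterior powers $\wedge^k M^\rt$ of the linear map $M^\rt$ that witnesses $Q_i=\sum_j M_{ij}L_j$. The paper simply cites this functoriality (referring to Eisenbud, \S17.2) without spelling out the commutativity checks, whereas you unpack the rightmost square explicitly and note the grading compatibility; the substance is identical.
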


\begin{proof}
This diagram comes from the definition of the Koszul complex (see for example Eisenbud's book \cite{E95}, Section~17, Subsection 17.2) and the functoriality of the Koszul complex: indeed we have $\wedge^0(D^3)~=~\wedge^3(D^3)=D$ and $\wedge^1(D^3)=\wedge^2(D^3)=D^3$, and the morphism $\id$ is just the morphism $\wedge^0M^\rt$, the morphism $M^\rt$ is $\wedge^1M^\rt$ and the morphism $\det(M^\rt)$ is $\wedge^3M^\rt$.
\end{proof}

\begin{remark}
In the following, we will not need the graduation of our complex, so we
will be writing and using it without specifying the graduation.
\end{remark}

\begin{corollary}\label{resolution}
A projective resolution of $[I:L]$ can be obtained by taking the mapping cone of the map of Koszul complexes $(M^\rt)^\vee: K\textbf{.}[L_1,L_2,L_3]^\vee \rightarrow K\textbf{.}[Q_1,Q_2,Q_3]^\vee$.
\end{corollary}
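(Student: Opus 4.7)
The strategy is to apply the classical mapping cone construction from Peskine--Szpiro's liaison theory \cite{PS74}. Since $\{L_1,L_2,L_3\}$ and $\{Q_1,Q_2,Q_3\}$ are regular sequences in $D$ by Lemma \ref{regular}, the Koszul complexes $K.[L_1,L_2,L_3]$ and $K.[Q_1,Q_2,Q_3]$ are free resolutions of $D/L$ and $D/I$ respectively; the morphism of Corollary \ref{koszulcomplexmap} lifts the natural surjection $D/I \twoheadrightarrow D/L$ at the level of these resolutions.

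I would then apply $\Hom_D(-,D)$ to obtain the dualized complexes $K.[L_1,L_2,L_3]^\vee$ and $K.[Q_1,Q_2,Q_3]^\vee$ together with the induced chain map $(M^\rt)^\vee$. Using that $L$ and $I$ are complete intersections of codimension $3$, hence that $D/L$ and $D/I$ are Gorenstein, the standard computation gives $\Ext^i_D(D/L,D) = \Ext^i_D(D/I,D) = 0$ for $i \neq 3$ while the top Ext groups are canonically isomorphic to $D/L$ and $D/I$ respectively. Moreover, the Cramer identity $\mathrm{adj}(M)\cdot Q = \det(M)\cdot L$ identifies the induced map on top cohomology (up to sign) with multiplication by $\det(M): D/L \to D/I$.

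Next, I form the mapping cone $C := \mathrm{Cone}((M^\rt)^\vee)$; this is a complex of free $D$-modules, and by the long exact sequence of the cone together with the vanishing of Ext in lower degrees, the cohomology of $C$ is concentrated in a single degree, where it computes the cokernel of $\det(M): D/L\to D/I$, which is $D/(I+(\det(M))) = D/J$. Combined with the liaison formula $J = [I:L]$ from \cite{PS74}, the complex $C$ thus furnishes the required free (in particular projective) resolution of $D/[I:L]$.

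The main obstacle is the liaison identification $J = [I:L]$, together with the vanishing of $\ker(\det(M):D/L\to D/I)$ needed for the cone to have cohomology in only one degree. The inclusion $J \subseteq [I:L]$ is immediate from the Cramer relation (so that $\det(M)\cdot L \subset I$), but the reverse inclusion and the kernel vanishing are the genuine content of the Peskine--Szpiro linkage theorem, which applies since $I$ is a complete intersection of codimension $3$ and $L$ contains $I$ while being itself a complete intersection of the same codimension.
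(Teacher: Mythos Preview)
Your proposal is essentially correct and indeed unpacks what the paper dispatches in one line: the paper's proof is simply ``This is straightforward from Proposition~2.6 in \cite{PS74}.'' That proposition asserts directly that when $I$ is a complete intersection and $L\supset I$ is Cohen--Macaulay of the same codimension, the mapping cone of the dualized comparison of free resolutions is a free resolution of $D/[I:L]$.

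Two remarks on your write-up. First, your logical flow is inverted relative to the paper. You compute the cohomology of the cone as $D/J$ and then invoke the liaison identity $J=[I:L]$ from \cite{PS74} to conclude. But in the paper (and in the Peskine--Szpiro argument itself) the order is the reverse: Proposition~2.6 of \cite{PS74} gives the resolution of $D/[I:L]$ \emph{a priori}, and only afterwards, in Corollary~\ref{perfect}, does the paper read off $[I:L]=J$ by inspecting the last map of the explicit cone. So the equality $J=[I:L]$ is an \emph{output} of the construction here, not an input; citing it to finish your argument is mildly circular.

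Second, the kernel vanishing you flag as a ``main obstacle'' is in fact elementary in this situation and does not require the full strength of Peskine--Szpiro. Since $D/L$ is a polynomial ring, hence a domain, and $\det(M)\notin L$ (this is checked in the proof of Lemma~\ref{minimalprime}), any relation $\det(M)\cdot a\in I\subset L$ forces $a\in L$; thus $\det(M):D/L\to D/I$ is injective. With this observation your direct cone computation already yields a resolution of $D/J$, and the identification with $D/[I:L]$ then becomes the content of the next corollary rather than a hypothesis.
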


\begin{proof}
This is straightforward from Proposition~2.6 in \cite{PS74}.
\end{proof}

\begin{corollary}\label{perfect}
The ideal $[I:L]$ is perfect of height $3$. Moreover, 
$$[I:L]=I+\det(M)=J.$$
\end{corollary}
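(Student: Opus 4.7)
The plan is to extract the corollary from Corollary \ref{resolution} combined with a short computation involving the adjugate of $M$.

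First, I will establish perfectness of height~$3$. Corollary \ref{resolution} provides an explicit free resolution of $D/[I:L]$ as the mapping cone of the dual Koszul map $(M^{\mathrm{t}})^\vee$. Both Koszul complexes involved have length $3$ (since $(L_1,L_2,L_3)$ and $(Q_1,Q_2,Q_3)$ are regular sequences by Lemma \ref{regular}), and the mapping cone simplifies to a complex of length~$3$ once one accounts for the top-degree component of the Koszul map being multiplication by $\det(M^{\mathrm{t}})$. This bounds $\projdim_D(D/[I:L]) \leq 3$. Conversely, since $I \subseteq [I:L]$ and $I$ is generated by a regular sequence of length~$3$, we have $\grade([I:L]) \geq 3$. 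Because $\grade \leq \projdim$ always, we conclude $\grade = \projdim = 3$, i.e.\ $[I:L]$ is perfect; and in the Cohen--Macaulay ring $D$ this grade equals the height.

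Next I will show $J \subseteq [I:L]$. The inclusion $I \subseteq [I:L]$ is automatic; for the remaining generator $\det(M)$, I multiply the column identity $MX = (Q_1,Q_2,Q_3)^{\mathrm{t}}$ (with $X = (L_1,L_2,L_3)^{\mathrm{t}}$, from the notation preceding Lemma \ref{minimalprime}) on the left by $\operatorname{adj}(M)$, obtaining $\det(M) \cdot X = \operatorname{adj}(M) \cdot (Q_1,Q_2,Q_3)^{\mathrm{t}}$, whose entries lie componentwise in $I$. Hence $\det(M) \cdot L_j \in I$ for each $j$, so $\det(M) \in [I:L]$ and $J \subseteq [I:L]$.

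The reverse inclusion $[I:L] \subseteq J$ is the step I expect to be the main obstacle, as it requires reading off actual generators of $[I:L]$ from the mapping cone rather than merely bounding its length. My plan is to compute $H^0$ of the cone of $(M^{\mathrm{t}})^\vee$ directly: its classes come from $K_0(Q)^\vee \oplus K_1(L)^\vee$, and modulo boundaries they reduce to the generators $Q_1, Q_2, Q_3$ (from the right summand) together with the image of the top Koszul differential $\wedge^3(M^{\mathrm{t}}) = \det(M)$ (from the left summand), the other potential contributions being absorbed by the Koszul boundaries of $(Q_1,Q_2,Q_3)$. This is essentially the liaison formula of Peskine--Szpiro \cite{PS74}, Proposition~2.6, applied to our pair of complete intersections, and it gives $[I:L] = I + (\det M) = J$, completing the identification.
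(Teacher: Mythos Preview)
Your approach is essentially the paper's: both extract a free resolution of $D/[I:L]$ from the mapping cone in Corollary~\ref{resolution}, combine $\projdim\le 3$ with $\grade\ge 3$ for perfectness, and then read off the generators of $[I:L]$ from the last map of that resolution.

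One correction worth making: the reason the mapping cone shortens to length~$3$ is not the top-degree component $\wedge^3 M^{\rt}=\det(M^{\rt})$ but the bottom-degree component $\wedge^0 M^{\rt}=\id$. In the paper's notation this is the map $f:D\oplus D^3\to D$, whose dual $f^\vee$ is a split injection with free cokernel $\cong D^3$, and that is what allows dropping a term. The $\det(M)$ map sits at the \emph{other} end of the cone and is precisely what contributes the extra generator when you compute the cokernel of the final map $u^\vee$, yielding $D/(Q_1,Q_2,Q_3,\det M)=D/J$ in one stroke. Consequently your separate adjugate computation for $J\subseteq[I:L]$ is correct but redundant, and your labeling of the summands in the $H_0$ step (``$K_0(Q)^\vee\oplus K_1(L)^\vee$'') has $L$ and $Q$ interchanged---worth straightening out before writing the argument in full.
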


\begin{proof} Using the notations of Corollary~\ref{koszulcomplexmap}, we can see that the mapping cone of $M^\rt$ is the following complex: 
$$0 \rightarrow D \oplus 0 \xrightarrow{u} D^3\oplus D \rightarrow D^3\oplus D^3\xrightarrow{} D \oplus D^3 \xrightarrow{f} 0\oplus D \rightarrow 0 $$

\noindent where the morphism $f$ is defined by:
\begin{align*}
    f: D \oplus D^3 &\rightarrow D\\
    (a,b,c,d) &\mapsto a+bL_1+cL_2+dL_3.
\end{align*} 

\noindent Then, dualizing the complex, we obtain: 
$$0 \rightarrow (D)^\vee \xrightarrow{f^\vee} (D \oplus D^3)^\vee
\rightarrow ( D^3\oplus D^3)^\vee \rightarrow (D^3\oplus D)^\vee \rightarrow
(D )^\vee \rightarrow 0$$

\noindent where the morphism $f^\vee$ is defined by:
\begin{align*}
    f^\vee :(D)^\vee & \rightarrow (D \oplus D^3)^\vee\\
    \phi &\mapsto \big( (a,b,c,d) \mapsto \phi(a+bL_1+cL_2+dL_3)\big).
\end{align*}

\noindent Replacing the morphism $f^\vee$ with its image,
and showing this image is projective, we
manage to reduce the length of this resolution. 
Indeed, let us denote by $H$ the kernel of $f$: $$H\defeq \{(a,b,c,d)\in D^4 \text{, }
a+bL_1+cL_2+dL_3=0\}.$$
Then let us show $$\im(f^\vee)=(D^4/H)^\vee\simeq (D^3)^\vee=\{\psi \in (D^4)^\vee
\text{, }\psi_{|H} \equiv 0\}.$$ Let $\psi$ be a form on $D^4$ such
that $\psi(H)=0$. Then for all $b,c,d \in D$, $$\psi(-bL_1-cL_2-dL_3,b,c,d)=0.$$ Let $(a,b,c,d)
\in D^4$. Then
\begin{align*}
    \psi(a,b,c,d)&=\psi(a+bL_1+cL_2+dL_3,0,0,0)+\psi(-bL_1-cL_2-dL_3,b,c,d)\\
    &=\psi(a+bL_1+cL_2+dL_3,0,0,0).
\end{align*}
Let us set $\phi: D\rightarrow D$, $x\mapsto \psi(x,0,0,0)$. Then we obtain
$\psi=f^\vee(\phi)$. The other inclusion is trivial.

Then, $\im(f)$ is free over $D$, so the projective resolution given by
Corollary~\ref{resolution} can be changed into this one: 
$$0 \rightarrow \im(f^\vee) \rightarrow ( D^3\oplus D^3)^\vee \rightarrow
(D^3\oplus D)^\vee \rightarrow (D )^\vee \rightarrow 0$$
which is a projective resolution of length $3$ of $D/[I:L]$. Then
$\projdim([I:L])~\leq~3$. But because $I\subset [I:L]$, we know that
$\grade([I:L])\geq 3$. Hence $[I:L]$ is perfect of grade $3$. 

Now in order to show $[I:L]=I+\det(M)$, we will see that the resolution
found above is actually a resolution of $I+\det(M)$. Let us calculate the
cokernel of the dual of this map:
\begin{align*} u: D& \rightarrow D^3\oplus D\\
1 & \mapsto (-Q_3,Q_2,-Q_1,\det(M)).
\end{align*}
Then the dual map is given by 
\begin{align*} u^\vee: (D^3 \oplus D)^\vee  & \rightarrow  D^\vee\\
\phi & \mapsto (1\mapsto \phi(-Q_3,Q_2,-Q_1,\det(M))). 
\end{align*}
Then the cokernel of this morphism is given by $D/(I+\det(M))$.
So by uniqueness of the cokernel, we have $[I:L]=I + \det(M)=J$.
\end{proof}

In order to show that the ideals $L$ and $J$ are
linked, it remains to show that $L=[I:J]$. It is
one of the purposes of the following proposition,
which is the main result of liaison theory that we
will be using in this article. It will give us
powerful tools to understand the ideal $J$ thanks
to the ideal $L$. For more convenience, let us
denote  by $\underline{L}\defeq L/I$ and
$\underline{J}\defeq J/I$ the two quotient ideals
in the quotient ring $R_{3,A}$. We have seen that
$\underline{J}$ is the annihilator
of $\underline{L}$ in $R_{3,A}$. 

\begin{proposition}\label{asso}
The ideal $\underline{L}$ is the annihilator of $\underline{J}$, and $R_{3,A}/\underline{J}$ has Cohen-Macaulay geometric fibers of dimension $6$.
\end{proposition}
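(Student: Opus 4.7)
The plan is to deduce the annihilator statement from the symmetry of linkage and the Cohen-Macaulay and dimension statements from the perfection of $J$ already established in Corollary \ref{perfect}.

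First I would reformulate the annihilator claim $\ann_{R_{3,A}}(\underline{J}) = \underline{L}$ as the colon equality $[I:J] = L$ in $D = A[a,b,c,d,e,f,g,h,i]$. Corollary \ref{perfect} already supplies the reciprocal link $[I:L] = J$, so what remains is the other half of the liaison. The natural tool is the symmetry of linkage from Peskine--Szpiro \cite{PS74}: in a Cohen-Macaulay ring $R$, if $\alpha \subset R$ is generated by a regular sequence of length $g$ and $\mathfrak{a} \supset \alpha$ is an unmixed ideal of grade $g$ such that $R/\mathfrak{a}$ is Cohen-Macaulay, then the linked ideal $\mathfrak{b} = [\alpha : \mathfrak{a}]$ satisfies $[\alpha : \mathfrak{b}] = \mathfrak{a}$. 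The hypotheses are immediate in our situation: $D$ is regular hence Cohen-Macaulay, $I = (Q_1,Q_2,Q_3)$ is a complete intersection of grade $3$ by Lemma \ref{regular}, $L = (L_1,L_2,L_3)$ is trivially a complete intersection of grade $3$ (so $D/L$ is Cohen-Macaulay), and the inclusion $I \subset L$ is forced by the matrix identity $M X = (Q_1,Q_2,Q_3)^{\mathrm{t}}$. Applying the theorem with $\mathfrak{a} = L$ yields $[I:J] = L$, i.e.\ $\underline{L} = \ann_{R_{3,A}}(\underline{J})$.

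For the second claim, I would invoke the length-$3$ free resolution of $D/J$ built in the proof of Corollary \ref{perfect}: this makes $J$ perfect of grade $3$ in $D$, so $D/J$ is Cohen-Macaulay with $\dim(D/J) = \dim D - 3$. To descend this to geometric fibers, pick a geometric point $\Spec(k) \to \Spec(A)$; the resolution consists of finitely generated free $D$-modules, and since $A$ is regular, flat base change along $A \to k$ preserves both exactness and the grade estimate. Hence $J_k \subset D_k = k[a,\ldots,i]$ remains perfect of grade $3$, so $D_k/J_k$ is Cohen-Macaulay of dimension $\dim D_k - 3 = 9 - 3 = 6$, which is precisely the announced property of geometric fibers of $R_{3,A}/\underline{J}$.

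The main obstacle I anticipate is the compatibility with the base ring: the classical liaison duality is usually stated over a local Cohen-Macaulay ring or a graded algebra over a field, whereas here $A$ may be $\Z$ or a general regular ring. This should nonetheless go through essentially verbatim, since the Koszul resolutions of $I$ and $L$, the mapping cone of Corollary \ref{resolution} and the colon computations all commute with flat base change, and the regularity of $A$ together with the universal regularity of the sequences $(L_1,L_2,L_3)$ and $(Q_1,Q_2,Q_3)$ ensures that the hypotheses of the Peskine--Szpiro statement hold after localization at any prime of $A$.
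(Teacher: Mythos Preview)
Your proposal is correct and follows essentially the same route as the paper: both arguments rest on the Peskine--Szpiro liaison symmetry applied to the complete intersection $I$ and the Cohen--Macaulay ideal $L$, with $J=[I:L]$ already known from Corollary~\ref{perfect}. The only cosmetic differences are that the paper localizes at primes of $R_{3,A}$ and applies \cite[Prop.~1.3]{PS74} in the Gorenstein quotient $R_{3,A}=D/I$ (then globalizes via the trivial containment $\underline{L}\subset\ann(\underline{J})$), whereas you work directly in $D$; and for the Cohen--Macaulay claim the paper invokes \cite[Prop.~1.3]{PS74} again while you deduce it from the perfection of $J$ already established---both are valid and amount to the same thing.
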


\begin{proof}
Let $\textbf{p} \in \Spec(R_{3,A})$ and let us
denote $R\defeq (R_{3,A})_{\textbf{p}}$. Let us
denote $I_1\defeq (\underline{L})_\textbf{p}$ and
$I_2\defeq (\underline{J})_\textbf{p}$. We would like to apply Proposition~1.3 from \cite{PS74}. Let us show that we are in good conditions:
\begin{enumerate}
\item $R$ is a Gorenstein local ring: indeed, $A[a,\dots,i]$ is regular hence Gorenstein, but because $I=(Q_1,Q_2,Q_3)$ is a regular sequence, then $A[a,\dots,i]/I$
   is also Gorenstein. Hence $R$ is Gorenstein as a localisation of a Gorenstein ring.
   \item $I_2=\ann(I_1)$: indeed $I_2=(\underline{J})_\textbf{p}=(\ann(\underline{L}))_\textbf{p}=\ann((\underline{L})_\textbf{p})$ because $R$ is Noetherian.
   \item $\dim(R)=\dim(R/I_1)$ because as $R$ is Gorenstein hence Cohen-Macaulay, so we can apply Proposition~2.15 d) in Chapter~8, Section~8.2.2 in Liu's book \cite{L02}, using $I_1$ as prime ideal which has height $0$ (because $L$ is a minimal prime thanks to Lemma~\ref{minimalprime}).
   \item  $R/I_1$ is regular hence Cohen-Macaulay. Then using Proposition~1.3 in \cite{PS74}, we obtain that
$$(\underline{L})_\mathbf{p}=[0:(\underline{J})_\mathbf{p}]=[0: \underline{J}]_\mathbf{p}.$$
Because we obtain this result for all $p\in \Spec(R)$ and because we
already know the inclusion $\underline{L} \subset [0:\underline{J}]$,
then we have the equality not only
locally but globally
$$\underline{L}=[0:\underline{J}].$$ Hence $L$ is an associate prime of $I$, and $L$ is the
annihilator of $J$ in $R_{3,A}$. 
\end{enumerate}
The end of the lemma follows from Proposition~1.3
in \cite{PS74}.
\end{proof}

Then now we know that the ideals $J$ and $L$ are
linked, and thanks to this the previous proposition
says that because $D/L$ is Cohen-Macaulay, then $D/J$
is Cohen-Macaulay as well. Thanks to this, we will
prove that, for any algebraically closed field $k$, the ideal $J$ is prime in $R_{3,k}$, then it
describes schematically the second irreducible
component of $\rL_{3,k}$. We need this preliminary
lemma first.

\begin{lemma}\label{smoothpoint}
Let $k$ be a field.
The scheme $\rL_{3,k}^{(2)}=\Spec(k[a,\dots,i]/J)$ has a smooth point.
\end{lemma}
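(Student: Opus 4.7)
The plan is to exhibit an explicit smooth $k$-rational point of $\rL_{3,k}^{(2)}$. The candidate is the point $p \in \A^9_k$ with coordinates
\[
(a,b,c,d,e,f,g,h,i) = (0,1,0,0,0,0,0,0,0),
\]
corresponding to the Lie algebra $\mathfrak{l}_0$ of the classification (bracket $[x,y]=y$, $[x,z]=[y,z]=0$).

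First I would check that $p \in \rL_{3,k}^{(2)}$. Since every coordinate other than $b$ vanishes, and every monomial in $Q_1, Q_2, Q_3$ is bilinear in the variables $(a,\dots,i)$, each Jacobi relation vanishes at $p$. Moreover every monomial in the expansion of $\det(M)$ is a product of three coordinates drawn from $\{b,c,e,f,g,h,i\}$ (since $a$ and $d$ do not appear in $M$) and contains at least two factors outside $\{b\}$, so $\det(M)(p)=0$ as well.

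The heart of the argument is computing the Jacobian of $(Q_1, Q_2, Q_3, \det(M))$ at $p$. As each $Q_j$ is bilinear and $b$ is the only nonzero coordinate of $p$, only the three monomials $-bg$, $bd$, $-bi$ appearing in $Q_1$, $Q_2$, $Q_3$ respectively contribute, yielding $\partial_g Q_1(p) = -1$, $\partial_d Q_2(p) = 1$, $\partial_i Q_3(p) = -1$, while every other partial of $Q_1, Q_2, Q_3$ vanishes at $p$. For $\det(M)$, each monomial in its expansion contains at least two factors from $\{c,e,f,g,h,i\}$, so every partial derivative of $\det(M)$ also vanishes at $p$. The Jacobian thus has exactly three nonzero entries, and its rank is~$3$.

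By Proposition~\ref{asso}, $\rL_{3,k}^{(2)}$ has pure dimension $6$ in $\A^9_k$, so its codimension is $3$. Since the rank of the Jacobian at $p$ equals this codimension, the Jacobian criterion implies that $p$ is a smooth point of $\rL_{3,k}^{(2)}$. The computation is uniform in the characteristic of $k$ because every nonzero entry of the Jacobian is $\pm 1$; the only real work is the bookkeeping of the nonzero partial derivatives, which is routine. The main conceptual point is the nontrivial fact that the row coming from $\det(M)$ contributes nothing to the rank, which is what allows the four-generator ideal $J$ to still cut out a smooth variety of codimension~$3$ at~$p$.
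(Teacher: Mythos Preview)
Your proof is correct and takes essentially the same approach as the paper: the paper exhibits the $\Z$-point $\mathfrak{l}_t$ (with $b\mapsto 1$, $f\mapsto t$, all other coordinates $0$) and computes $\mathfrak{l}_t^*(\Omega^1_{\rL_3^{(2)}})$ to be free of rank~$6$, while you take the special case $t=0$ and phrase the identical computation in the dual language of the Jacobian matrix. Your observation that every monomial of $\det(M)$ contains at most one factor of $b$ (since $b$ occurs only once in $M$) is exactly why the $\det(M)$-row contributes nothing, and this matches the paper's finding that $\rd(\det M)$ pulls back to $t\,\rd g$, which vanishes when $t=0$.
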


\begin{proof}
Actually, we will find a $\Z$-point of $\rL_3$ along
which $\rL_3$ is smooth. Let $t \in \Z$ and let
$\mathfrak{l}_t: \Spec(\Z)\rightarrow \Spec(\Z[a,\dots,i]/J) $ given
on the rings by $a,c,d,e,g,h,i\mapsto 0 ,  b\mapsto
1$ and $f\mapsto t$. Let us recall that $\det(M)=-ch^2+gbf+ei^2-gec+hbi-ihf$. 

\noindent Then, $$\mathfrak{l}_t^*\big(\Omega^1_{\rL_{3}^{(2)}}\big)=\mathfrak{l}_t
^*\bigg(\frac{\Z\cdot \rd a\oplus \dots \oplus \Z\cdot \rd i}{\rd
Q_1,\rd Q_2,\rd Q_3,\rd \det(M)}\bigg)$$ and we have the following
equalities:

\begin{align*}
    \rd Q_1&=a(\rd h)+h(\rd a)+d(\rd i)+i(\rd d)-f(\rd g)-g(\rd
    f)-b(\rd g)-g(\rd b)\\
    \rd Q_2&=i(\rd e)+e(\rd i)+b(\rd d)+d(\rd b)-f(\rd h)-h(\rd
    f)-a(\rd e)-e(\rd a)\\
    \rd Q_3&=h(\rd c)+c(\rd h)+d(\rd c)+c(\rd d)-a(\rd f)-f(\rd
    a)-b(\rd i)-i(\rd b)\\
    \rd \det(M)&=(fg+hi)\rd b+(-eg-h^2)\rd c+(-cg+i^2)\rd
    e+(bg-hi)\rd f\\
    &\hspace{0.5cm}+(-ce+bf)\rd g+(-2ch+bi-fi)\rd h+(bh-fh+2ei)\rd
    i.
\end{align*}

\noindent Then, $$\mathfrak{l}_t^*\big(\Omega^1_{\rL^{(2)}_{3}}\big)=\frac
{\Z\cdot \rd a\oplus \dots \oplus \Z\cdot \rd i}{(t+1)\rd g,\rd
d-t \rd h,\rd i+t \rd f,t \rd g}=\frac{\Z\cdot \rd a\oplus \dots
\oplus \Z\cdot \rd i}{\rd g,\rd d-t \rd h,\rd i+t \rd f}$$ so it is
a free $\Z$-module of rank $6$. But thanks to Proposition
\ref{asso}, we know that $\rL_3^{(2)}$ has dimension $6$, thus this
$\Z$-point is smooth and the proof is done.
\end{proof}

\begin{corollary}
The ideal $J$ is prime in $R_{3,k}$ for any algebraically closed field $k$, and the scheme $\Spec(k[a,\cdots,i]/J)$ is integral.
\end{corollary}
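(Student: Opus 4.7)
The plan is to show that $\Spec(R_{3,k}/J)$ is both reduced and topologically irreducible; once both are established, the scheme is integral and $J$ is prime. (Note that since $I\subseteq J$, the quotient $k[a,\dots,i]/J$ coincides with $R_{3,k}/J$.)

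For irreducibility, I will invoke Lemma \ref{topologyirr}: the underlying topological space $\rL_{3,k} = V(I)$ has exactly two $6$-dimensional irreducible components, namely $V(L)\simeq \A^6_k$ and a second one $C$ (the closure of the orbit of $\mathfrak{l}_T$). Proposition \ref{asso} tells us that $R_{3,k}/J$ is Cohen-Macaulay of dimension $6$, so equidimensional; hence $V(J)$ must be a union of $6$-dimensional irreducible components of $V(I)$. A direct substitution (for example $a=i$, $b=-f$, $d=-h$, which kills $L$) shows that $\det(M)\notin L$, so $V(L)\not\subseteq V(J)$; since $V(J)$ is nonempty, this forces $V(J)=C$, which is irreducible.

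For reducedness, I will apply Serre's criterion $(R_0)+(S_1)$. Cohen-Macaulayness from Proposition \ref{asso} yields $(S_1)$ immediately. For $(R_0)$, Lemma \ref{smoothpoint} provides a smooth (hence regular) point $x\in \rL_{3,k}^{(2)}$, so the local ring $(R_{3,k}/J)_x$ is a regular local ring, in particular a domain. Since $V(J)$ is irreducible it has a unique generic point $\eta$, corresponding to a unique minimal prime $\mathfrak{p}$ of $R_{3,k}/J$; this $\mathfrak{p}$ lies in the maximal ideal at $x$ and maps to the zero ideal of the domain $(R_{3,k}/J)_x$, so $(R_{3,k}/J)_\eta$ is the fraction field of that domain, hence a field. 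This gives $(R_0)$.

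Combining the two ingredients, $R_{3,k}/J$ is reduced with a unique minimal prime, hence a domain; equivalently $J$ is prime and $\Spec(k[a,\dots,i]/J)$ is integral. I expect the main obstacle to be confirming that $V(J)$ is \emph{exactly} the second topological component rather than a strictly larger or embedded subscheme; this rests on combining the equidimensionality coming from the Cohen-Macaulay property of Proposition \ref{asso} with the explicit non-vanishing of $\det(M)$ on $V(L)$.
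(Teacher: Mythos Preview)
Your proof is correct and follows essentially the same strategy as the paper's: identify $V(J)$ topologically with the second irreducible component of $\rL_{3,k}$ (irreducibility), then combine the Cohen--Macaulay property from Proposition~\ref{asso} with the smooth point from Lemma~\ref{smoothpoint} to obtain reducedness. The paper phrases the reducedness step as ``generically reduced plus no embedded associated primes'' while you phrase it as Serre's $(R_0)+(S_1)$; these are equivalent. If anything, your irreducibility step is more carefully justified than the paper's: you explicitly use equidimensionality (from Cohen--Macaulayness) to force $V(J)$ to be a union of $6$-dimensional components of $V(I)$, then rule out $V(L)$ via $\det(M)\notin L$, whereas the paper simply asserts $V(J)=\overline{o(\mathfrak{l}_T)}$ by reference to Lemma~\ref{topologyirr}.
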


\begin{proof}
Let $k$ be an algebraically closed field.
We have proved in the Lemma~\ref{topologyirr} that
$V(J)~=~\overline{o(\mathfrak{l}_T)} \subset R_{3,k}$ which is irreducible. Moreover, we saw in Lemma~\ref{smoothpoint} that it
has a smooth point. But because we know from
Proposition~\ref{asso} that it is Cohen-Macaulay, then without associated points, 
so because it is generically reduced, it is reduced. Hence $V(J)$ is integral and $J$ is prime.
\end{proof}

\begin{proposition}\label{intersection}
In the polynomial ring $A[a,\dots,i]$, we have the equality $$I=J\cap L.$$
\end{proposition}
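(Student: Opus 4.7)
The containment $I \subseteq L \cap J$ is automatic, since $I \subseteq L$ and $I \subseteq J$ by definition. For the reverse inclusion, my plan is to exploit the three identities $(\ast)$ already recorded in the proof of Lemma \ref{minimalprime}, which express each product $\det(M) \cdot L_k$ as an explicit $A[a,\dots,i]$-linear combination of the generators $Q_1, Q_2, Q_3$ of $I$. Equivalently, $\det(M) \cdot L \subseteq I$. This is the essential algebraic input.

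Given $x \in L \cap J$, I would use the presentation $J = I + (\det M)$ to write $x = q + \lambda \det(M)$ with $q \in I$ and $\lambda \in A[a,\dots,i]$. Since $I \subseteq L$ and $x \in L$, we obtain $\lambda \det(M) \in L$. The crux is then to show that $\det(M)$ is a non-zero-divisor modulo $L$. Under the canonical isomorphism $A[a,\dots,i]/L \simeq A[a,b,c,d,e,g]$ (eliminate $i = a$, $f = -b$, $h = -d$), the reduction of $\det(M)$ is a polynomial whose coefficients include $\pm 1$, hence of unit content; by the content form of McCoy's lemma it is a non-zero-divisor in any polynomial ring over $A$. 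We therefore get $\lambda \in L$, say $\lambda = \sum_k \alpha_k L_k$, and then $(\ast)$ yields $\lambda \det(M) = \sum_k \alpha_k \bigl(L_k \det(M)\bigr) \in I$. Thus $x = q + \lambda \det(M) \in I$, finishing the proof.

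The main obstacle is very mild: it is the verification that $\det(M) \bmod L$ is a non-zero-divisor, but as soon as one inspects the explicit polynomial this is immediate. An alternative, more abstract route would be via liaison: since $(Q_1,Q_2,Q_3)$ is a regular sequence, $A[a,\dots,i]/I$ is Cohen--Macaulay, hence has no embedded primes; a localisation calculation at each of the two linked primes $L$ and $J$ would identify the corresponding primary components with $L$ and $J$ themselves, giving $I = L \cap J$ from the primary decomposition. However, the direct computation via $(\ast)$ is shorter and stays within the explicit framework already set up in the paper.
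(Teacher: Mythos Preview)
Your proof is correct and is in fact more direct than the paper's. Both arguments rest on the same two ingredients: the identities $(\ast)$ giving $\det(M)\cdot L\subseteq I$, and the fact that $\det(M)\notin L$. You use these head-on: write $x\in L\cap J$ as $q+\lambda\det(M)$, deduce $\lambda\det(M)\in L$, and then invoke that $\det(M)$ is a non-zero-divisor modulo $L$ (which you verify explicitly via the unit coefficient of $a^2e$ in the reduced polynomial $ea^2-cd^2-gb^2-gec-2abd$) to conclude $\lambda\in L$ and hence $\lambda\det(M)\in I$ by $(\ast)$.

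The paper instead takes the localisation route you sketch at the end: since $(Q_1,Q_2,Q_3)$ is regular, $A[a,\dots,i]/I$ is Cohen--Macaulay and hence unmixed, so it suffices to check the inclusion after localising at each associated (necessarily minimal) prime of $I$; at $\mathbf{p}=L$ one gets $I_{\mathbf{p}}=L_{\mathbf{p}}$ from $(\ast)$ and $\det(M)\notin L$, and at any other $\mathbf{p}$ one gets $I_{\mathbf{p}}=J_{\mathbf{p}}$ since some $L_k\notin\mathbf{p}$. Your argument has the advantage of working uniformly over an arbitrary base ring $A$ without appealing to the Cohen--Macaulay property or to the theory of associated primes; the paper's approach, on the other hand, fits more naturally into the liaison framework being developed and makes the role of the two minimal primes explicit.
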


\begin{proof}
Thanks to Corollary~3.5 in Section~3, Subsection 3.2 of \cite{E95}, it is sufficient to show that $(J\cap L)_\textbf{p}\subset I_\textbf{p}$ for all primes $\textbf{p}$
associated to $I$. Thanks to Proposition~\ref{asso}, we know that $L$ is an associated prime of $I$.

Now let $\textbf{p}$ be such a prime ideal. As $(J\cap L)_\textbf{p} \subset J_\textbf{p}\cap L_\textbf{p}$, it is sufficient to prove that $J_\textbf{p}\cap L_\textbf{p}\subset I_\textbf{p}$. 
\begin{itemize}
    \item If $\textbf{p}=L$, we will show that $I_\textbf{p}=L_\textbf{p}$. Then let $\frac{a}{b}\in L_\textbf{p}$, with $a\in L$ and $b\notin L$. Then, because $\det(M)\notin L$, $\frac{a}{b}=\frac{a\det(M)}{b\det(M)} \in I_\textbf{p}$ because we showed in Lemma \ref{minimalprime} that $\det(M)L_1\in I$. Then $I_\textbf{p}=L_\textbf{p}$, so $J_\textbf{p}\cap L_\textbf{p} \subset I_\textbf{p}$.
    \item If $\textbf{p}\neq L$, we will show that $I_\textbf{p}=J_\textbf{p}$. Let
    $\frac{\det(M)}{b}\in J_\textbf{p}$. As $L\neq \textbf{p}$ and $\textbf{p}$ is
    minimal, we have $L\nsubseteq \textbf{p}$, so we can suppose $L_1\notin
    \textbf{p}$. Then because $\det(M)L_1\in I$ and
    $\frac{\det(M)}{b}~=~\frac{\det(M)L_1}{bL_1}$, we have the equality
    $I_\textbf{p}=J_\textbf{p}$. Hence $J_\textbf{p}\cap L_\textbf{p}\subset
    I_\textbf{p}$.  
\end{itemize}
\end{proof}

\begin{corollary}
The ideal $J$ is minimal among the prime ideals containing $I$.
\end{corollary}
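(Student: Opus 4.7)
The plan is to combine Proposition~\ref{intersection} (which says $I = J \cap L$) with a short degree argument: primality of a hypothetical $\mathfrak{p}$ between $I$ and $J$ will force either $J \subseteq \mathfrak{p}$ (giving equality) or $L \subseteq \mathfrak{p} \subseteq J$ (which I will rule out directly).

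First I would fix a prime ideal $\mathfrak{p} \subset A[a,\dots,i]$ with $I \subseteq \mathfrak{p} \subseteq J$ and observe that $JL \subseteq J \cap L = I \subseteq \mathfrak{p}$ by Proposition~\ref{intersection}. Primality of $\mathfrak{p}$ then gives $J \subseteq \mathfrak{p}$ or $L \subseteq \mathfrak{p}$. In the first subcase, the two inclusions $J \subseteq \mathfrak{p}$ and $\mathfrak{p} \subseteq J$ immediately yield $\mathfrak{p} = J$, which is the desired conclusion. So the whole problem reduces to excluding the second subcase, which would entail $L \subseteq J$.

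To rule out $L \subseteq J$, I would endow the polynomial ring $A[a,\dots,i]$ with its standard grading, placing each of the nine variables in degree~$1$ and $A$ itself in degree~$0$. With this grading, $J$ is a homogeneous ideal whose generators $Q_1, Q_2, Q_3$ (homogeneous of degree~$2$) and $\det(M)$ (homogeneous of degree~$3$) all lie in the irrelevant-ideal square $(a,\dots,i)^2$. Hence $J$ contains no nonzero element of degree $\leq 1$. But $L_1 = a-i$ is a nonzero linear form lying in $L$, contradicting $L \subseteq J$. This contradiction finishes the proof.

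I do not expect any serious obstacle here: all the substantive content has been packaged in the earlier results (most importantly the primary decomposition-type identity $I = J \cap L$ of Proposition~\ref{intersection}), and the final step is a one-line grading argument. The only point worth a moment's attention is that the degree argument should work uniformly over an arbitrary regular base ring $A$, which is automatic since the generators of $J$ are homogeneous of positive degree in the standard grading, regardless of $A$.
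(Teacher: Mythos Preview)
Your proof is correct and follows essentially the same route as the paper's. The only cosmetic differences are that the paper obtains the dichotomy by casing on whether $\det(M)\in\mathfrak{p}$ (using $J=I+(\det(M))$ and the relations $\det(M)L_i\in I$) rather than via $JL\subset J\cap L=I$, and it phrases the contradiction as ``$I=J\cap L=L$, impossible'' instead of your direct degree argument showing $L\not\subset J$; both are the same idea.
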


\begin{proof}
Let $\textbf{p} \in \Spec(A[a,\dots,i])$ such that $I\subset \textbf{p}\subset J$. If $\det(M)\in \textbf{p}$, then $J=\textbf{p}$. Otherwise, we have $L\subset \textbf{p} \subset J$, hence $I=J\cap L= L$ which is impossible. Then $\textbf{p}=J$ and $J$ is a minimal prime among the ones containing $I$.
\end{proof}

Now we have our two relative irreducible components denoted by $\rL_{3,A}^{(1)}$ and $\rL_{3,A}^{(2)}$, we still have to prove the flatness of $\rL_3^{(2)}$ over the ring of integer $\Z$. We need a preliminary lemma first.

\begin{lemma}\label{flatcoker}
Let $R$ be any commutative ring with unit, and $d\geq 1$. Let
$$0\rightarrow P_{d+1}\rightarrow P_{d} \rightarrow \dots \rightarrow
P_1\rightarrow M\rightarrow 0$$ be an exact sequence of $R$-modules such
that all $P_1,\dots, P_d$ are $R$-flat and we suppose that this exact
sequence is still exact after any base change $R\rightarrow R/I$ where
$I$ is an ideal of $R$. Then, $M$ is also $R$-flat.
\end{lemma}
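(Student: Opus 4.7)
\medskip

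\noindent\textbf{Proof proposal.} The plan is to verify that $\mathrm{Tor}_1^R(M,R/I)=0$ for every ideal $I\subseteq R$, which by the standard flatness criterion is equivalent to the flatness of $M$. Fix such an ideal, set $N\defeq R/I$, and let $K\defeq\ker(P_1\to M)$; by the exactness of the original sequence, $K=\im(P_2\to P_1)$. From the short exact sequence $0\to K\to P_1\to M\to 0$ and the flatness of $P_1$, the long exact Tor sequence identifies
$$\mathrm{Tor}_1^R(M,N)=\ker\bigl(K\otimes_R N\to P_1\otimes_R N\bigr),$$
so it suffices to prove that $K\otimes_R N\to P_1\otimes_R N$ is injective.

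The key observation is that the differential induces a surjection $\pi:P_2\twoheadrightarrow K$, and tensoring with $N$ keeps it surjective; write $\pi_N:P_2\otimes N\twoheadrightarrow K\otimes N$. The composite $P_2\otimes N\xrightarrow{\pi_N}K\otimes N\to P_1\otimes N$ is the natural map from the tensored complex, so $\ker(\pi_N)\subseteq\ker(P_2\otimes N\to P_1\otimes N)$. For the reverse inclusion, the hypothesis provides exactness of the tensored sequence at $P_2\otimes N$, namely $\ker(P_2\otimes N\to P_1\otimes N)=\im(P_3\otimes N\to P_2\otimes N)$ (with the convention $P_3\defeq 0$ when $d=1$). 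Since $P_3\to P_2\to P_1$ is zero and $K$ injects into $P_1$, the composite $P_3\to P_2\to K$ already vanishes, so $\im(P_3\otimes N\to P_2\otimes N)\subseteq\ker(\pi_N)$. Combining these inclusions yields the crucial equality
$$\ker\bigl(P_2\otimes N\to K\otimes N\bigr)=\ker\bigl(P_2\otimes N\to P_1\otimes N\bigr).$$

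The conclusion follows by a short diagram chase. Given $y\in K\otimes N$ mapping to $0$ in $P_1\otimes N$, lift $y$ to some $x\in P_2\otimes N$ through the surjection $\pi_N$; then $x$ lies in $\ker(P_2\otimes N\to P_1\otimes N)=\ker(\pi_N)$, so $y=\pi_N(x)=0$. Hence $K\otimes_R N\to P_1\otimes_R N$ is injective and $\mathrm{Tor}_1^R(M,N)=0$. The only subtlety I anticipate is the uniform treatment of the boundary case $d=1$, where $P_2=K$ and $\pi_N$ is the identity: the injectivity to be proved then reduces literally to the left-exactness built into the hypothesis, so no real work is required. Everything else is formal diagram-chasing with the long exact sequence of $\mathrm{Tor}$, so I expect no serious obstacle.
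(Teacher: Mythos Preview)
Your argument is correct. It differs from the paper's proof in a genuine way: the paper argues by induction on $d$, chopping the sequence from the \emph{left}. Namely, it sets $C=\coker(P_{d+2}\to P_{d+1})$, observes that the left-end short exact sequence $0\to P_{d+2}\to P_{d+1}\to C\to 0$ stays exact after tensoring with any $R/I$ (so $C$ is flat, by the base case $d=1$), and then applies the inductive hypothesis to the shorter resolution $0\to C\to P_d\to\cdots\to P_1\to M\to 0$. Your approach instead works directly at the \emph{right} end, computing $\mathrm{Tor}_1^R(M,R/I)$ via $K=\ker(P_1\to M)$ and using only exactness at $P_2\otimes R/I$. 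This is more economical: you invoke the flatness of $P_1$ alone and never touch $P_3,\dots,P_d$ or their flatness, so you have in fact proved a slightly sharper statement than the one in the paper. The paper's inductive scheme, on the other hand, has the minor advantage of also showing along the way that every syzygy $\im(P_{j+1}\to P_j)$ is flat, which can be useful in other contexts.
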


\begin{proof}
We do an induction on the integer $d$. If $d=1$, then this is
classic. If $d>1$, let
$$0\rightarrow P_{d+2}\xrightarrow{\phi_{d+2}} P_{d+1} \xrightarrow{\phi_{d+1}}
P_{d}\rightarrow \dots \rightarrow P_1\rightarrow M\rightarrow 0$$ be an exact sequence with $d+3$ terms, which is like in the statement. Let us define $$C\defeq
\coker(\phi_{d+2})=P_{d+1}/\im(\phi_{d+2})=P_{d+1}/\ker(\phi_{d+1})=\im(\phi_{d+1})=\ker(
\phi_{d}).$$ Because the inclusion $P_{d+2}\xrightarrow{\phi_{d+2}} P_{d+1}$
is universally injective by hypothesis, the cokernel $C$ is flat over $R$. Then the following exact sequence:
$$0\rightarrow C \xrightarrow{\phi_d} P_{d} \rightarrow \dots \rightarrow
P_1\rightarrow M\rightarrow 0$$ is an exact sequence of $d+2$ terms which is like in the statement. Then by induction, we conclude that $M$ is $R$-flat.
\end{proof}

\begin{corollary}\label{flat}
The scheme $ \rL_{3}^{(2)}$ is flat over $\Spec(\Z)$.
\end{corollary}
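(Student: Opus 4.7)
The plan is to apply Lemma~\ref{flatcoker} to the explicit finite free resolution of $D/J$ over $D\defeq \Z[a,\dots,i]$ that was constructed in the proof of Corollary~\ref{perfect}. Since every term of that resolution is a finite free $D$-module, it is in particular $\Z$-flat; hence what must be checked is that the resolution remains exact after every base change $\Z\to \Z/n$.

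First I would observe that the construction itself is functorial in the base: tensoring with $\Z/n$ takes the Koszul complexes $K_{\bullet}(L_1,L_2,L_3)$ and $K_{\bullet}(Q_1,Q_2,Q_3)$ over $\Z[a,\dots,i]$ to the analogous Koszul complexes over $(\Z/n)[a,\dots,i]$, and it takes the map of Koszul complexes induced by $M^\rt$ to the corresponding map over $\Z/n$. The mapping cone construction, and the elimination of the trivial free factor from its dual that yielded the length-$3$ resolution of $D/J$, commute with base change. Thus the question reduces to showing that over $(\Z/n)[a,\dots,i]$ the two Koszul complexes are still acyclic, i.e. that $(L_1,L_2,L_3)$ and $(Q_1,Q_2,Q_3)$ remain regular sequences.

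Now the proof of Lemma~\ref{regular} relies only on the elementary principle that a polynomial whose leading coefficient is a regular element is itself regular, applied successively to $Q_1$ viewed in the variable $g$, to $Q_2$ viewed in the variable $e$, and to $Q_3$ viewed in the variable $c$. The required leading coefficients are, respectively, $a-i$, $b-i$ (or a similar monic expression in the remaining variables), and $h+d$; in each case these are polynomials whose coefficients are $\pm 1$, and so they stay regular in $(\Z/n)[\dots]$. Hence the argument of Lemma~\ref{regular} goes through verbatim over $\Z/n$, and both Koszul complexes remain acyclic after base change.

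With both Koszul complexes acyclic over $\Z/n$, the mapping cone still gives a resolution of $D_{\Z/n}/[I:L]_{\Z/n}$, and the same simplification produces a length-$3$ free resolution of $D_{\Z/n}/J_{\Z/n}$. Thus the original resolution over $\Z$ stays exact under any quotient $\Z\to \Z/n$. Applying Lemma~\ref{flatcoker} with this resolution and $M=R_{3}/\underline{J}=\Z[a,\dots,i]/J$ yields that $\rL_3^{(2)}=\Spec(\Z[a,\dots,i]/J)$ is flat over $\Spec(\Z)$. The only genuinely delicate point is the persistence of regularity of $(Q_1,Q_2,Q_3)$ over every residue quotient of $\Z$; but as just noted the argument is entirely characteristic-free, so there is no actual obstacle.
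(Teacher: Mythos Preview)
Your proof is correct and follows essentially the same approach as the paper: the resolution of $D/J$ built in Corollary~\ref{perfect} is constructed uniformly over any base ring, so it remains a resolution after reduction modulo any $n$, and Lemma~\ref{flatcoker} applies. One small slip: the leading coefficients you name are not quite right (they are $-(f+b)$ for $Q_1$ in $g$, $i-a$ for $Q_2$ in $e$, and $h+d$ for $Q_3$ in $c$), but since each is a linear form with unit coefficients your conclusion that Lemma~\ref{regular} holds over any $\Z/n$ is unaffected.
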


\begin{proof}
Let $B$ be any ring. Then, the resolution found in Corollary~\ref{resolution} is
a resolution of the ideal $J$ in $B[a,\dots,i]$, so it is universal. Then we can apply the previous Lemma~\ref{flatcoker}.
\end{proof}

\begin{proposition}\label{entireflat}
The entire scheme $\rL_3$ is flat over $\Z$, and the ideal $I$ is radical.
\end{proposition}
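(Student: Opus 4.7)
The plan is to derive both assertions from the decomposition $I = J\cap L$ of Proposition~\ref{intersection} and the already-established flatness of the two components. Writing $D = \Z[a,b,c,d,e,f,g,h,i]$, the equality $I = J \cap L$ yields the standard Mayer--Vietoris short exact sequence
$$0 \longrightarrow D/I \longrightarrow D/J \oplus D/L \longrightarrow D/(J+L) \longrightarrow 0,$$
whose middle term is $\Z$-flat: the quotient $D/L \simeq \Z[a,b,c,d,e,g]$ is a polynomial ring, while $D/J$ is $\Z$-flat by Corollary~\ref{flat}.

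For the flatness of $D/I$, I would apply the long exact sequence of $\mathrm{Tor}^{\Z}_{\bullet}(\Z/p\Z,-)$ for an arbitrary prime $p$, extracting the portion
$$\mathrm{Tor}_2^{\Z}(\Z/p\Z,\,D/(J+L)) \longrightarrow \mathrm{Tor}_1^{\Z}(\Z/p\Z,\,D/I) \longrightarrow \mathrm{Tor}_1^{\Z}(\Z/p\Z,\,D/J\oplus D/L).$$
The right-hand term vanishes because $D/J$ and $D/L$ are $\Z$-flat, and the left-hand term vanishes because $\Z$ has global dimension $1$, so $\mathrm{Tor}^{\Z}_n=0$ in every degree $n\geq 2$. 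Sandwiched between two zeros, $\mathrm{Tor}_1^{\Z}(\Z/p\Z,D/I)$ vanishes for every prime $p$, and the local criterion of flatness over the principal ideal domain $\Z$ yields that $D/I$ is $\Z$-flat.

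For radicalness, I would use that a $\Z$-flat ring embeds into its tensor product with $\Q$ and so is reduced whenever its generic fibre is. The corollary preceding Proposition~\ref{intersection} says that $D/J \otimes_\Z k$ is integral for every algebraically closed field $k$; taking $k=\overline{\Q}$ gives that $D/J \otimes \overline{\Q}$ is reduced, and since $D/J\otimes \Q$ injects into $D/J\otimes \overline{\Q}$ (base change along the faithfully flat extension $\Q \hookrightarrow \overline{\Q}$), the ring $D/J\otimes\Q$ is reduced too. Combined with the $\Z$-flatness of $D/J$, this forces $D/J$ itself to be reduced; $D/L$ is trivially reduced as a polynomial ring over $\Z$; hence $D/J\oplus D/L$ is reduced and the Mayer--Vietoris injection identifies $D/I$ with a subring of a reduced ring, proving that $I$ is radical.

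The argument contains no real obstacle: the entire proposition amounts to organising the Mayer--Vietoris sequence properly, together with the observation that the potential obstruction $\mathrm{Tor}_1^{\Z}(\Z/p\Z,D/I)$ is caught between a term killed by flatness of the components and a term killed by the fact that $\Z$ has global dimension one. The only point worth checking carefully is the subring argument $D/J\otimes \Q\hookrightarrow D/J\otimes\overline{\Q}$, which one needs in order to descend reducedness from the geometric generic fibre to the generic fibre over $\Z$.
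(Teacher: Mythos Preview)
Your argument is correct and follows the same overall route as the paper: both proofs use Proposition~\ref{intersection} to embed $D/I$ into $D/J\oplus D/L$ and then read off flatness and reducedness from the target. The paper's version is more elementary on the flatness side: since $\Z$ is a PID, flat is the same as torsion-free, and a submodule of a torsion-free module is torsion-free, so the injection $D/I\hookrightarrow D/J\times D/L$ finishes the argument in one line without any Tor computation. Your Mayer--Vietoris/Tor route works, but the extra machinery (exactness on the right, the $\mathrm{Tor}_2$ vanishing from global dimension~$1$) is not needed here. On the radicalness side you are actually more careful than the paper, which simply asserts that $J$ is radical in $\Z[a,\dots,i]$; your descent from the geometric generic fibre to $D/J$ via flatness spells out the missing step.
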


\begin{proof}
Because we have proved $I=L\cap J$ in Proposition~\ref{intersection}, the following map is injective:
\begin{align*}
    \Z[a,\dots,i]/I&\hookrightarrow \Z[a,\dots,i]/L\times \Z[a,\dots,i]/J\\
    \overline{a}&\mapsto (\overline{a},\overline{a}).
\end{align*}
But both of the rings that appear on the right-hand side are flat over $\Z$, then without torsion, so $\Z[a,\dots,i]/I$ is without $\Z$-torsion, then it is $\Z$-flat.

Moreover, because $L$ and $J$ are both radical and $I=J\cap L$, then $J$ is radical.
\end{proof}


\newpage

\subsection{Summary: a picture of a geometric fiber of our moduli space}\label{picture}
\begin{notitle}{In characteristic $p=2$} \end{notitle}Let $k$ be an algebraically closed
field of characteristic $p=2$. Here is a picture representing the two irreducible
components of $\rL_{3,k}$. The points correspond to the different orbits on it, and we 
specify the restrictable ones. We also write on it the dimension of the
center of those Lie algebras.

\begin{center}
\begin{tikzpicture}[xscale=1.15,yscale=1.3,>=latex]

\path[draw,name path=border1] (0,1) to[] (5.5,-1.5);
\draw[draw,name path=border2] (12,1) to[] (6.5,3.5);
\path[draw,name path=line1] (5.5,-1.5) -- (12,1);
\path[draw,name path=line2] (6.5,3.5) -- (0,1);

\shade[left color=red!40,right color=red!70] 
 (0,1) to[] (5.5,-1.5) -- 
 (12,1) to[] (6.5,3.5) -- cycle;

\path[draw,name path=border3] (-1,-4) to[out=20,in=220] (3,3);
\path[draw,name path=border4] (6,-7) to[out=40,in=210] (9,1);
\path[draw,name path=border5] (-1,-4) to[out=0,in=80] (6,-7);
\path[draw,name path=border6] (3,3) to[out=10,in=140] (9,1);

   \draw[fill=green] (6,1.8,1.9)  circle[radius=2pt] ;
   \fill (6,2.1,1.9)  circle[radius=0pt] node{$\fab_3$, $\mathbf{3}$};
\draw[fill=green] (3.8,2.5,2.2) circle[radius=5pt];
\fill (3.8,3,2.2) circle[radius=0pt] node{$\mathbf{\fh_3}$, $\mathbf{1}$};
\draw[fill=green] (7.5,0.1,0.7) circle[radius=5pt];
\fill (7.5,0.7,0.7) circle[radius=0pt] node{$\mathbf{\mathfrak{l}_{-1}}$, $\mathbf{0}$};
\draw[fill=black](4.5,1.8,1) circle[radius=8pt] ;
\fill (4.5,2.4,1) circle[ radius=0pt] node{$\mathbf{\mathfrak{r} }$, $\mathbf{0}$};

\draw[fill=black] (5.5,-2,2.5) circle[ radius=8pt];
\fill (5,-2.1,0.5) circle[ radius=0pt] node{$\mathbf{\mathfrak{l}_\textit{t}=\mathfrak{l}_{\textit{t}^{-1}}}$, $\mathbf{0}$};
\fill (5,-2.35,0.5) circle[ radius=0pt] node{\scriptsize $ \mathbf{\textit{t}\notin \mathbb{F}_2}$} ;
\draw[fill=green] (3,-1.75,1.8) circle[ radius=8pt];
\fill (3,-1.2,1.8) circle[ radius=0pt] node{$\mathbf{\mathfrak{l}_0}$, $\mathbf{1}$};

\draw[fill=black] (9,1.8) circle[radius=12pt] ;
\fill (9,2.5) circle[radius=0pt] node{$\mathbf{s}$, $\mathbf{0}$} ;

\node at  (10.3,1) [rectangle,draw] (A6) {\LARGE $\mathbf{\mathbb{A}_\textbf{\textit{k}}^6}$};
\node at (7,-4.2,1.6)  [rectangle, draw,] (L32) { \LARGE $\mathbf{\rL_{3,\textbf{\textit{k}}}^{(2)}}$};
  
\shade[top color=black, bottom color=white, opacity=0.1] 
  (-1,-4) to[out=20,in=220] (3,3)  to[out=10,in=140] (9,1)
 to[out=210,in=40] (6,-7) to[out=80,in=0] (-1,-4);

\path[name intersections={of=border3 and line2,by={a}}];
\path[name intersections={of=border4 and line1,by={b}}];

\draw[thick,dashed] (a) to[out=-10,in=130] (b);
\end{tikzpicture}
\end{center}
\vspace{1cm}
\large \underline{\textbf{Caption}}:\normalsize
\begin{minipage}{0.4\linewidth}\hspace{1.5cm}
\begin{itemize}
\item [\textcolor{green}{\textbullet}] Restrictable orbit.
\item [\textcolor{black}{\textbullet}] Non-restrictable orbit
\end{itemize}
\end{minipage}
\hspace{0.5cm}
\begin{minipage}{0.4\linewidth}
\begin{tikzpicture}
\draw (9,1.5)  circle[radius=15.6pt];
\node[draw] at (12.5,1.5) {Orbit of dimension 6};
\draw (9,2.5)  circle[radius=10.4pt];
\node[draw] at (12.5,2.5) {Orbit of dimension 5};
\draw (9,3.5)  circle[radius=6.5pt];
\node[draw] at (12.5,3.5) {Orbit of dimension 3};
\draw (9,4.5)  circle[radius=2.6pt];
\node[draw] at (12.5,4.5) {Orbit of dimension 0};
\end{tikzpicture}
\end{minipage}

\newpage
\begin{notitle}{In characteristic $p\neq2$} \end{notitle}Let $k$ be an algebraically closed field of characteristic $p>2$. As on the previous page, here is a picture representing the two irreducible components of $\rL_{3,k}$ with the orbits on it, and the dimension of the center of those Lie algebras.

\begin{center}
\begin{tikzpicture}[xscale=1.15,yscale=1.3,>=latex]

\path[draw,name path=border1] (0,1) to[] (5.5,-1.5);
\draw[draw,name path=border2] (12,1) to[] (6.5,3.5);
\path[draw,name path=line1] (5.5,-1.5) -- (12,1);
\path[draw,name path=line2] (6.5,3.5) -- (0,1);

\shade[left color=red!40,right color=red!70] 
 (0,1) to[] (5.5,-1.5) -- 
 (12,1) to[] (6.5,3.5) -- cycle;

\path[draw,name path=border3] (-1,-4) to[out=20,in=220] (3,3);
\path[draw,name path=border4] (6,-7) to[out=40,in=210] (9,1);
\path[draw,name path=border5] (-1,-4) to[out=0,in=80] (6,-7);
\path[draw,name path=border6] (3,3) to[out=10,in=140] (9,1);

   \draw[fill=green] (6,1.8,1.9)  circle[radius=2pt] ;
   \fill (6,2.1,1.9)  circle[radius=0pt] node{$\fab_3$, $\mathbf{3}$};
\draw[fill=green] (3.8,2.5,2.2) circle[radius=5pt];
\fill (3.8,3,2.2) circle[radius=0pt] node{$\mathbf{\fh_3}$, $\mathbf{1}$};
\draw[fill=green] (7.5,0.1,0.7) circle[radius=5pt];
\fill (7.5,0.7,0.7) circle[radius=0pt] node{$\mathbf{\mathfrak{l}_{-1}}$, $\mathbf{0}$};
\draw[fill=black] (5,-3.35,1.6) circle[radius=10pt] ;
\fill (5,-2.7,1.6) circle[ radius=0pt] node{$\mathbf{\mathfrak{r} }$, $\mathbf{0}$};
\draw[fill=green] (4,-2,1.6) circle[ radius=5pt] ;
\fill (4,-1.5,1.6) circle[ radius=0pt] node{$\mathbf{\mathfrak{l}_1}$, $\mathbf{0}$};
\draw[fill=black] (7.2,-2.3,2.5) circle[ radius=10pt];
\fill (6.5,-2.2,0.3) circle[ radius=0pt] node{$\mathbf{\mathfrak{l}_\textit{t}=\mathfrak{l}_{\textit{t}^{-1}}}$, $\mathbf{0}$};
\fill (6.5,-2.55,0.3) circle[ radius=0pt] node{\scriptsize $\mathbf{\textit{t}\notin \mathbb{F}_p}$};
\draw[fill=green](2.6,-2.5,2.5) circle[ radius=8pt];
\fill (1.7,-2.5,0.2) circle[ radius=0pt] node{$\mathbf{\mathfrak{l}_\textit{t}=\mathfrak{l}_{\textit{t}^{-1}}}$, $\mathbf{0}$};
\fill (1.7, -2.8, 0.2) circle[radius=0pt] node{\scriptsize $\mathbf{\textit{t} \in \mathbb{F}_p}, \mathbf{\textit{t}\neq 0, \pm1}$};
\draw[fill=green] (2.5,-0.85,1.8) circle[ radius=8pt];
\fill (2.5,-0.3,1.8) circle[ radius=0pt] node{$\mathbf{\mathfrak{l}_0}$, $\mathbf{1}$};

\draw[fill=green] (9,1.8)  circle[radius=12pt] ;
\fill (9,2.5)  circle[radius=0pt] node{$\mathbf{\fs}$, $\mathbf{0}$} ;
  
\node at  (10.3,1) [rectangle,draw] (A6) {\LARGE $\mathbf{\mathbb{A}_\textbf{\textit{k}}^6}$};
\node at (7,-4.2,1.6)  [rectangle, draw,] (L32) { \LARGE $\rm{L}_{\mathbf{3,\textbf{\textit{k}}}}^{\mathbf{(2)}}$};
\shade[top color=black, bottom color=black, opacity=0.1]
  (-1,-4) to[out=20,in=220] (3,3)  to[out=10,in=140] (9,1)
 to[out=210,in=40] (6,-7) to[out=80,in=0] (-1,-4);

\path[name intersections={of=border3 and line2,by={a}}];
\path[name intersections={of=border4 and line1,by={b}}];

\draw[thick,dashed] (a) to[out=-10,in=130] (b);
\end{tikzpicture}
\end{center}
\vspace{1cm}
\large \underline{\textbf{Caption}}:\normalsize
\begin{minipage}{0.4\linewidth}\hspace{1.5cm}
\begin{itemize}
\item [\textcolor{green}{\textbullet}] Restrictable orbit.
\item [\textcolor{black}{\textbullet}] Non-restrictable orbit
\end{itemize}
\end{minipage}
\hspace{0.5cm}
\begin{minipage}{0.4\linewidth}
\begin{tikzpicture}
\draw (9,1.5)  circle[radius=15.6pt];
\node[draw] at (12.5,1.5) {Orbit of dimension 6};
\draw (9,2.5)  circle[radius=10.4pt];
\node[draw] at (12.5,2.5) {Orbit of dimension 5};
\draw (9,3.5)  circle[radius=6.5pt];
\node[draw] at (12.5,3.5) {Orbit of dimension 3};
\draw (9,4.5)  circle[radius=2.6pt];
\node[draw] at (12.5,4.5) {Orbit of dimension 0};
\end{tikzpicture}
\end{minipage}

\newpage

\section{Smoothness of \texorpdfstring{$\rL_3^{\res}$}{} on the flattening stratification of the center}

We did not study all the equations of the singular
locus of $\rL_3$, but using \cite{Macaulay2}, we can
see that the singular locus of $\rL_{3,\Q}^{(2)}$
over $\mathbb{Q}$ is given by an ideal, whose radical
is $I_2(M)+L$, where $I_2(M)$ is the ideal generated by the two-minors of the matrix $M$, the one introduced in
Subsection~\ref{subsection4.2}. In order to study 
the singular locus over $\mathbb{Z}$, we prefer to carry out explicit tangent space computations. 

For the rest of the article, let us denote by
$\mathbb{L}_n\defeq \mathbb{A}^n_{\rL_n}$ the
universal Lie algebra of rank $n$ over
$\rL_n$.
Then in the following, we will study the smoothness of the restricted locus 
$\rL_3^{\res}\hookrightarrow \rL_3$ of the universal Lie algebra $\mathbb{L}_3\rightarrow\rL_3$. As said before, we know from Theorem~\ref{Yrpz} that it is interesting to study it after passing to the
flattening stratification of the center. So for all this section, let us denote $k=\mathbb{F}_p$. All the schemes are understood as $k$-schemes. 

Thanks to the theory of Fitting
ideals (the reader can look at
\spref{0C3C} for more details), we can have an explicit description of the
different strata. 
We write $Z(\mathbb{L}_n)$ the center of the universal Lie algebra.
Let
$$\rL_n=:Z_{-1}\supset Z_0\supset Z_1 \supset \dots $$ be the closed subschemes
defined by the Fitting ideals of $Z(\mathbb{L}_n)$. Then, for $r\geq 0$, let us define
$\rL_{n,r}\defeq Z_{r-1}\setminus Z_r$ is the locally closed subscheme of $\rL_n$ where
$Z(\mathbb{L}_n)$ is locally free of rank $r$. Actually in the following, we will not need
to calculate explicitly the flattening stratification. 
We use the notation $\rL_{n,r}^{\res}$ for the locally~closed subscheme of
$\rL_n$ where the center $Z(\mathbb{L}_n)$ is locally free of rank $r$, and
$\mathbb{L}_n$ is restrictable, i.e. $\rL_{n,r}^{\res}\defeq \rL_{n,r} \cap
\rL_n^{\res}$. 
\subsection{Correspondence between the center of the group and the one of the Lie algebra}

In the following, we will extend the classical equivalence of
categories between locally free Lie $p$-algebras of finite rank with finite locally free group schemes of height $1$, showing that the centers of those
objects correspond to each other. This is remarkable
because the centers are not flat in general. In order
to do this we will use the functor denoted by
$\Spec^*$ in \cite{SGA3}, Tome 1, exposé VII$_A$, $\S
3.1.2$. We need first a preliminary lemma. For all this section, let $S$ be a scheme of characteristic $p>0$.

\begin{lemma}\label{injective} Let $G\rightarrow S$ a 
group scheme. Let $R$ be any ring. Then the following
morphism:
\begin{align*}
    \Lie(G)(R)&\xrightarrow{\exp} G(R[\alpha,\beta]/(\alpha^2,\beta^2))\\
    x&\longmapsto \exp(\alpha \beta x)
\end{align*}
is injective.
\end{lemma}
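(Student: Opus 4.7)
The plan is to realise $\exp$ as the composition
\[
\Lie(G)(R) \hookrightarrow G(R[\epsilon]/\epsilon^2) \xrightarrow{G(\phi)} G(R[\alpha,\beta]/(\alpha^2,\beta^2)),
\]
in which the first arrow is the tautological inclusion defining $\Lie(G)(R)$ as the kernel of $G(R[\epsilon]/\epsilon^2)\to G(R)$, and $\phi:R[\epsilon]/\epsilon^2 \to R[\alpha,\beta]/(\alpha^2,\beta^2)$ is the ring morphism $\epsilon\mapsto \alpha\beta$, which is well defined because $(\alpha\beta)^2 = \alpha^2\beta^2 = 0$. It will then suffice to check injectivity of $G(\phi)$ on the subset of points of $G(R[\epsilon]/\epsilon^2)$ that reduce to the identity modulo~$\epsilon$.

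First I would reduce to the case where $G$ is affine. Any $x\in\Lie(G)(R)$ is an $S$-morphism $\Spec(R[\epsilon]/\epsilon^2)\to G$ whose set-theoretic image is reduced to a single point, namely the image of $e:\Spec(R)\to G$, because source and target are homeomorphic. Hence it factors through any affine open neighbourhood $U\subset G$ of the identity section. The same holds for $\exp(\alpha\beta x)$ since $\Spec(R[\alpha,\beta]/(\alpha^2,\beta^2))$ is also topologically a single point, mapped to $e$. We may therefore replace $G$ by $U = \Spec(A)$, shrinking $S$ to a corresponding affine open $\Spec(R_0)$ as well.

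Under this reduction, giving $x\in \Lie(G)(R)$ amounts to giving an $R_0$-algebra homomorphism $A\to R[\epsilon]/\epsilon^2$ of the form $a \mapsto u(a) + \epsilon\, d_x(a)$, where $u:A\to R$ is the co-unit (the ring map defining $e\in G(R)$) and $d_x:A\to R$ is an $R_0$-derivation, $R$ being viewed as an $A$-module via $u$. Applying $G(\phi)$ then produces the morphism $a\mapsto u(a)+\alpha\beta\cdot d_x(a)$. If $\exp(\alpha\beta x)=\exp(\alpha\beta y)$, this forces $\alpha\beta\bigl(d_x(a)-d_y(a)\bigr)=0$ in $R[\alpha,\beta]/(\alpha^2,\beta^2)$ for every $a\in A$. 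Since $\{1,\alpha,\beta,\alpha\beta\}$ is an $R$-basis of $R[\alpha,\beta]/(\alpha^2,\beta^2)$, the $R$-linear map $r\mapsto r\alpha\beta$ is injective, so $d_x = d_y$, and therefore $x = y$.

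There is no substantial obstacle: the whole statement is formal once one reads $\exp$ as $G$ applied to the substitution $\epsilon\mapsto\alpha\beta$. The only points deserving care are the reduction to an affine neighbourhood of the identity (standard for infinitesimal thickenings of a point of a scheme) and the elementary observation that $R\cdot\alpha\beta$ is a free $R$-module of rank one inside $R[\alpha,\beta]/(\alpha^2,\beta^2)$, which makes the comparison of $d_x$ and $d_y$ immediate.
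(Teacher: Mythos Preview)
Your approach is essentially the same as the paper's: factor $\exp$ as $\Lie(G)(R)\hookrightarrow G(R[\epsilon]/\epsilon^2)\to G(R[\alpha,\beta]/(\alpha^2,\beta^2))$ via $\epsilon\mapsto\alpha\beta$, and use injectivity of the second arrow. The paper, however, dispatches that injectivity in one line: since $R[\epsilon]/\epsilon^2\hookrightarrow R[\alpha,\beta]/(\alpha^2,\beta^2)$ is injective and the induced map on spectra is a topological surjection, the latter is an epimorphism of schemes, hence $G(-)$ of it is injective for any scheme $G$. No affine reduction is needed.

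Your affine reduction contains a slip: the set-theoretic image of $\Spec(R[\epsilon]/\epsilon^2)\to G$ is $e(\Spec R)$, which is \emph{not} a single point in general, so it need not land in a chosen affine open of $G$. The fix is routine---localize on $\Spec R$ first, using that $\Lie(G)$ and $G(-)$ are Zariski sheaves so injectivity can be checked after passing to a cover---but as written the step is not justified. Once that is repaired, your explicit computation with derivations and the basis $\{1,\alpha,\beta,\alpha\beta\}$ is correct and gives the same conclusion.
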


\begin{proof}
Let us write $R(\alpha,\beta)\defeq R[\alpha,\beta]/(\alpha^2,\beta^2)$ and let $f:\Spec(R(\alpha,\beta))\rightarrow
\Spec(R[\epsilon]/(\epsilon^2))$ be the scheme morphism
coming from this injective ring morphism:
$R[\epsilon]/(\epsilon^2)\hookrightarrow
R(\alpha,\beta)$, $\epsilon \mapsto
\alpha\beta$. Then $f$ is surjective as a topological
map, so because $f^\#$ is injective, $f$ is an
epimorphism in the category of schemes. Then this gives
an injective morphism $G(R(\epsilon))\hookrightarrow G(R(\alpha, \beta))$ which gives by restriction 
to the Lie algebras an injective morphism
$$\Lie(G)(R)\hookrightarrow G(R(\epsilon))\hookrightarrow G(R(\alpha, \beta)).$$
\end{proof}

\begin{proposition}\label{centercorres}
Let $G\rightarrow S$ be a finite locally free group
scheme of height $1$. Let $Z(G)$
denote its center. Then 
$$Z(\Lie(G))=\Lie(Z(G)).$$
\end{proposition}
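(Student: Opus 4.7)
The plan is to prove both inclusions $\Lie(Z(G))\subseteq Z(\Lie(G))$ and $Z(\Lie(G))\subseteq \Lie(Z(G))$ separately; the first is formal and would hold for any group scheme, while the second is where the height-$1$ hypothesis enters, via the equivalence of categories $\Lie:\mathcal{G}_n(S)\isor p\text{-}\mathcal{L}ie_n(S)$ stated in the introduction.

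For the first inclusion I would take $x\in\Lie(Z(G))(R)$ and $y\in\Lie(G)(R)$, view $x$ inside $\Lie(G)(R)$ via $Z(G)\hookrightarrow G$, and use the fact that $\exp(\alpha x)$ lies in $Z(G)(R[\alpha]/(\alpha^2))$ to obtain that $\exp(\alpha x)$ commutes with $\exp(\beta y)$ in $G(R[\alpha,\beta]/(\alpha^2,\beta^2))$. The standard identity expressing the group commutator in terms of the Lie bracket gives
\[
\exp(\alpha x)\exp(\beta y)\exp(-\alpha x)\exp(-\beta y)=\exp(\alpha\beta[x,y]),
\]
where all higher order terms in the Baker--Campbell--Hausdorff expansion vanish because of the relations $\alpha^2=\beta^2=0$. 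Centrality of $\exp(\alpha x)$ forces the left-hand side to be $1=\exp(\alpha\beta\cdot 0)$, and Lemma~\ref{injective} applied to $[x,y]$ then yields $[x,y]=0$, so $x\in Z(\Lie(G))(R)$ as desired.

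For the reverse inclusion, I would fix $x\in Z(\Lie(G))(R)$ and set $g\defeq \exp(\alpha x)\in G(R[\alpha]/(\alpha^2))$; the aim is to show $g\in Z(G)(R[\alpha]/(\alpha^2))$, which amounts to showing that the inner conjugation morphism $c_g:G_{R[\alpha]/(\alpha^2)}\to G_{R[\alpha]/(\alpha^2)}$ is the identity. Since height is stable under base change, $G_{R[\alpha]/(\alpha^2)}$ is still a finite locally free group scheme of height $1$, so the equivalence of categories applied over $R[\alpha]/(\alpha^2)$ identifies the $R[\alpha]/(\alpha^2)$-group automorphism $c_g$ with the induced $p$-Lie algebra automorphism $\Ad(g)$ of $\Lie(G)_{R[\alpha]/(\alpha^2)}$. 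A direct computation in dual numbers gives $\Ad(g)=\id+\alpha\,\ad(x)$, which reduces to $\id$ precisely because the hypothesis $x\in Z(\Lie(G))(R)$ means $\ad(x)=0$. By faithfulness of the equivalence on morphisms, this forces $c_g=\id$, hence $g\in Z(G)(R[\alpha]/(\alpha^2))$ and $x\in\Lie(Z(G))(R)$.

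The main obstacle is the reverse inclusion: it is precisely here that one needs the height-$1$ hypothesis, via the equivalence of categories, in order to pass from the triviality of $\Ad(g)$ on the $p$-Lie algebra side to the triviality of $c_g$ on the group side, a step which would fail for arbitrary infinitesimal group schemes. A secondary technical point to verify is that the equivalence of categories is stable under base change from $S$ to $\Spec(R[\alpha]/(\alpha^2))$, so that it applies to $G_{R[\alpha]/(\alpha^2)}$; this should follow from the functoriality of the construction, but deserves a quick check before invoking the equivalence over a varying base.
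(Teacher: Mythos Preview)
Your argument is correct. Both you and the paper prove the inclusion $\Lie(Z(G))\subset Z(\Lie(G))$ in the same way, via the commutator identity $\exp(\alpha x)\exp(\beta y)\exp(-\alpha x)\exp(-\beta y)=\exp(\alpha\beta[x,y])$ and Lemma~\ref{injective}.

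For the harder inclusion $Z(\Lie(G))\subset\Lie(Z(G))$, your route differs from the paper's. The paper argues on the level of subgroup schemes: writing $\fz=Z(\Lie(G))$, it observes that $U_p(\fz)$ is a commutative sub-bialgebra of $U_p(\fg)$ because $\fz$ is abelian, so the associated height-$1$ subgroup $G_p(\fz)=\Spec^*(U_p(\fz))\hookrightarrow G_p(\fg)=G$ lands inside $Z(G)$; applying $\Lie$ and using $\fz\subset\Prim(U_p(\fz))=\Lie(G_p(\fz))$ gives $\fz\subset\Lie(Z(G))$. Your approach instead works element by element and uses only \emph{faithfulness} of the equivalence: from $\ad(x)=0$ you get $\Ad(\exp(\alpha x))=\id+\alpha\,\ad(x)=\id$, hence $c_{\exp(\alpha x)}=\id$ and $\exp(\alpha x)\in Z(G)$. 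Your argument is lighter in that it avoids the $\Spec^*$/$U_p$ machinery of SGA3~VII$_A$ entirely, at the price of invoking the equivalence over the auxiliary base $R[\alpha]/(\alpha^2)$ (which, as you note, is harmless since height~$1$ and the equivalence are stable under base change). The paper's argument, on the other hand, exhibits an actual closed subgroup scheme of $Z(G)$ with the right Lie algebra, which is a slightly stronger intermediate statement.
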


\begin{proof}
For more convenience,
let us write $\fg\defeq \Lie(G)$ and $\fz\defeq
Z(\fg)$. When $\fl$ is a Lie $p$-algebra, we
use the notation $G_p(\fl)\defeq
\Spec^*(U_p(\fl))$ where $U_p(\fl)$ is the
universal restricted enveloping algebra of
$\fl$ and where the notation $\Spec^*$ comes
from \cite{SGA3}, exposé VII$_A$, $\S 3.1.2.$ and is 
defined for any $S$-scheme
$T\rightarrow S$ by: 
$$G_p(\fl)(T)\defeq \Spec^*(U_p(\fl))(T)=\{x\in
U_p(\fl)\otimes \mathcal{O}_T(T), \epsilon(x)=1
\text{ and } \Delta(x)=x\otimes x\}.$$

Let us show $\fz\subset \Lie(Z(G))$. The inclusion $\fz
\subset \fg$ gives a bialgebra inclusion of universal restricted enveloping algebras
$U_p(\fz)\subset U_p(\fg)$, and looking at the
definition, we see that this gives an inclusion
of functors:
$$G_p(\fz)\subset G_p(\fg)=G$$
where the last equality is because $G$ is of 
height $1$. But actually, this
subfunctor takes its values in the center of 
$G$: indeed, because $\fz$ is an abelian Lie 
algebra, the bialgebra $U_p(\fz)$ is
commutative (because for all $x,y \in \fz$, we 
have $x\otimes y-y\otimes x=[x,y]=0$ in $U_p(\fz)$). Moreover by definition, we have for any $S$-scheme
$T\rightarrow S$, $$G(T)=G_p(\fg)(T)=\{x\in
U_p(\fg)\otimes \mathcal{O}_T(T), \epsilon(x)=1
\text{ and } \Delta(x)=x\otimes x\}$$ where the group 
law of $G(T)$ is given by $(x,y)\mapsto x\otimes y$.
But because the algebra $U_p(\fz)$ is abelian, 
then if $x\in G_p(\fz)(T)$, then $x\in
Z(G_p(\fg))$, i.e. $$G_p(\fz)\subset Z(G).$$
Applying the functor Lie we obtain $$\Lie(G_p(\fz))\subset \Lie(Z(G))$$ but 
looking at \cite{SGA3}, exposé VII$_A$, $\S 3.2.3$, we know that 
$\Lie(G_p(\fz))=\Prim(\mathbb{W}(U_p(\fz)))$ and 
by definition, $\fz\subset \Prim(\mathbb{W}(U_p(\fz)))$ so we have the 
inclusion $$\fz\subset \Lie(Z(G)).$$

Now let us show $\Lie(Z(G))\subset \fz$. 
Let $f:Z(G)\hookrightarrow G$ be the closed immersion. It is a
monomorphism then it is injective on the functor of
points. Let $R$ be any ring and let us denote by 
$R(\alpha,\beta)~\defeq~R[\alpha,\beta]/(\alpha^2,\beta^2)$.
Then we know from \cite{DG70} Chapitre II, $\S
4$, n°3, 3.7 (3), that the following diagram is
commutative: $$  \xymatrix{\Lie(Z(G))(R)
\ar[r]^{\exp} \ar[d]_{\Lie(f_R)} &
Z(G)(R(\alpha,\beta)) \ar[d]^{f_S} \\
    \Lie(G)(R) \ar[r]_\exp & G(R(\alpha,\beta))}$$
and the composed map $\Lie(Z(G))(R)\rightarrow G(R(\alpha,\beta))$ is injective. Moreover, if $x\in \Lie(Z(G))(R)$, then $\exp(\alpha
x)\in Z(G)(R)\subset Z(G)(R(\alpha,\beta))$ hence for all $y\in \Lie(G)(R)$,
$$1=\exp(\alpha x)\exp(\beta y)\exp(-\alpha
x)\exp(-\beta y)=\exp(\alpha \beta [x,y])$$
where the last equality comes from \cite{DG70},
Chapitre II, $\S 4$, n°4, $4.2$ (6), and where
$[x,y]$ is the bracket on $\Lie(G)(R)$. But
$x\mapsto \exp(\alpha\beta x)$ is injective thanks to
Lemma~\ref{injective},
then we obtain $[x,y]=0$ for all $y~\in~\Lie(G)(R)$
then $x\in Z(\Lie(G))(R)$. 
\end{proof}

Thanks to this result, we can count the number of
centerless finite locally free group schemes of order $p^3$ of height $1$ on an algebraically closed field:
\begin{proposition}\label{count} Let $k$ be an algebraically closed field of characteristic $p>0$. 
Up to isomorphism,
\begin{itemize}
    \item[-] if $p=2$, there is only $1$ such group scheme.
    \item[-] if $p\neq 2$, there are $(p+3)/2$ such group schemes.
\end{itemize}
\end{proposition}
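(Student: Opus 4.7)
The plan is to translate the counting problem from group schemes to Lie algebras via Theorem~\ref{TE} and then read off the answer from the classification of Theorem~\ref{touteslesinfos}. More precisely, Theorem~\ref{TE} gives an equivalence $\Lie: \mathcal{G}_{3,0}(k) \isor p\text{-}\mathcal{L}ie_{3,0}(k)$, so classifying centerless finite locally free $k$-group schemes of order $p^3$ and height $1$ up to isomorphism is the same as classifying $3$-dimensional restricted $k$-Lie algebras whose center is zero, up to $p$-isomorphism. Since $k$ is algebraically closed, it suffices to list those isomorphism classes of $3$-dimensional Lie algebras which are simultaneously centerless and restrictable, and then to count the $p$-mappings that each one supports: by Corollary~\ref{centerlessunique}, when $Z(l)=0$ the $p$-mapping is unique, so the count of $p$-isomorphism classes equals the count of isomorphism classes of centerless restrictable Lie algebras.

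First I would extract from the table of Theorem~\ref{touteslesinfos} the rows whose center is zero and whose restrictability entry is \emph{yes}. This leaves exactly the following candidates: the simple algebra $\fs$ (restrictable only when $p\neq 2$), the algebra $\mathfrak{l}_1$ (always restrictable, centerless), and the family $\mathfrak{l}_t$ for $\bar t \in \mathbb{F}_p/\!\sim \,\setminus\{\bar 0,\bar 1\}$ (always restrictable, centerless). All other rows are excluded: $\fab_3$ and $\fh_3$ have nonzero center, $\mathfrak{r}$ and the $\mathfrak{l}_t$ with $\bar t\notin\mathbb{F}_p/\!\sim$ are non-restrictable, and $\mathfrak{l}_0$ has nonzero center.

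For $p=2$ one has $\mathbb{F}_2/\!\sim\,\setminus\{\bar 0,\bar 1\}=\emptyset$ and $\fs$ is not restrictable, so only $\mathfrak{l}_1$ remains, giving exactly one isomorphism class. For $p\neq 2$, the simple algebra $\fs$ contributes one class and $\mathfrak{l}_1$ contributes another, so it remains to count the isomorphism classes among the $\mathfrak{l}_t$ with $\bar t \in \mathbb{F}_p^{\times}\setminus\{\bar 1\}$, where two such are isomorphic precisely when their parameters are related by $t\sim t^{-1}$ (this last fact is exactly the content of the discussion of $\mathfrak{l}_T$ just before Theorem~\ref{touteslesinfos}). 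The involution $t\mapsto t^{-1}$ on $\mathbb{F}_p^{\times}\setminus\{\bar 1\}$ has the single fixed point $\bar{-1}$ (since $p\neq 2$ forces $\bar{-1}\neq \bar 1$), so the orbit count is $1+(p-3)/2=(p-1)/2$.

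Adding up, for $p\neq 2$ we obtain $1+1+(p-1)/2=(p+3)/2$ isomorphism classes, which matches the claim. No step is really an obstacle: the only nontrivial point is the enumeration of $\mathbb{F}_p^{\times}\setminus\{\bar 1\}$ modulo the involution $t\sim t^{-1}$, which is a short combinatorial verification carried out above.
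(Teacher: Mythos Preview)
Your proposal is correct and follows the same approach as the paper: reduce via the Lie equivalence and uniqueness of the $p$-mapping on centerless algebras, then count the centerless restrictable classes from the table in Theorem~\ref{touteslesinfos}. The paper's proof is terser---it simply invokes the classification and the identification $\mathfrak{l}_t\simeq\mathfrak{l}_{t^{-1}}$---while you spell out the orbit count of the involution on $\mathbb{F}_p^\times\setminus\{\bar 1\}$ explicitly, but the argument is the same.
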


\begin{proof}
It suffices to count the centerless restrictable Lie algebras of
rank $3$, classified in \ref{restrictableorbit}. Indeed, because they are centerless, they have only one structure of Lie $p$-algebra so there is only one algebraic group scheme corresponding to it. For
$p\neq 2$, it is useful to remember that, with our
notations, the Lie algebras $\mathfrak{l}_t$ and
$\mathfrak{l}_{t^{-1}}$ are in same orbit when $t\neq 0$.
\end{proof}

This extended equivalence allows us to study the properties
of$\rL_n^{\res}$ to deduce properties on the moduli space of finite
locally free group schemes killed by Frobenius. That is, let $S$ be
a scheme of characteristic $p>0$, and for $r\leq n$, let us recall
the notations $p\text{-}\mathcal{L}ie_{n,r}(S)$ for the category of
$n$-dimensional restrictable $\mathcal{O}_S$-Lie algebras whose
center is locally free of rank $r$, and $\mathcal{G}_{n,r}(S)$ the
category of finite locally free group schemes of order
$p^n$, of height $1$, whose center is
locally free of rank $p^r$. With these notations and using the previous
results, we know that the functor $\Lie$ gives us an equivalence of categories: $$ \Lie:\mathcal{G}_{n,r}(S)\overset{\thicksim}{\longrightarrow}
p\text{-}\mathcal{L}ie_{n,r}(S).$$
Moreover, because $\GL_n$ is smooth, the quotient map $\rL_n\rightarrow \mathcal{L}ie_n$ is smooth, so studying the smoothness of $\rL_n$ is equivalent to study the one of $\mathcal{L}ie_n$. Let us denote by $\rL_n^p\defeq X(\mathbb{L}_n)$ the set of $p$-mappings on $\mathbb{L}_n$. Then the quotient map $\rL_n^p\rightarrow p\text{-}\mathcal{L}ie_n$ is smooth, and
$$\rL^p_n\xrightarrow{\text{Forgetful}}\rL^{\res}_n$$ is an affine
fibration, and if for $r\leq n$, we denote by $\rL^p_{n,r}\defeq \rL^p_n \cap \rL_{n,r}$, we know that $$\rL_{n,r}^p\xrightarrow{\text{Forgetful}}\rL^{\res}_{n,r}$$ is smooth for all $r\leq n$. This is the reason why in the following, we will study the smoothness of $\rL^{\res}_{3,r}$ for $r\leq 3$.
 
\subsection{In the stratum \texorpdfstring{$\rL_{3,0}$}{TEXT}}\label{5.2}
\begin{notitle}{Study of \texorpdfstring{$\rL_{3,0}^{\res}$}{TEXT} in the whole scheme \texorpdfstring{$\rL_3$}{TEXT}} \end{notitle}Thanks to the results we have established before, we can imagine all the $k$-point which are in the orbit of
$\mathfrak{l}_{-1}$ are singular in $\rL_{3,0}^{\res}$ because this orbit is in the
intersection of two irreducible components. Actually, thanks to a
calculation of tangent space, we will see that they are the only singular ones.

\begin{theorem}
If $\car(k)\neq 2$, the singular locus of $\rL_{3,0}^{\res}$ is the orbit of $\mathfrak{l}_{-1}$. If $\car(k)=2$, the scheme $\rL_{3,0}^{\res}$ is smooth.
\end{theorem}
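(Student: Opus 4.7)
The strategy is to use $\GL_3$-equivariance to reduce smoothness to a Zariski tangent space computation at one representative of each $\GL_{3,k}$-orbit in $\rL_{3,0}^{\res}$. By Theorem~\ref{touteslesinfos}, for $p\neq 2$ the orbits contained in $\rL_{3,0}^{\res}$ are those of $\fs$ (dim $6$), $\mathfrak{l}_{-1}$ (dim $5$), $\mathfrak{l}_t$ for $\overline{t}\in\mathbb{F}_p^\times/\!\thicksim$ with $\overline{t}\neq \overline{\pm 1}$ (dim $5$), and $\mathfrak{l}_1$ (dim $3$); for $p=2$, only the orbit of $\mathfrak{l}_1=\mathfrak{l}_{-1}$ (dim $3$). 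Since $\fab_3$, $\fh_3$ and $\mathfrak{l}_0$ all have nontrivial center while $\mathfrak{r}$ and the generic $\mathfrak{l}_t$ are non-restrictable, no orbit of $\rL_{3,0}^{\res}$ contains another in its closure within $\rL_{3,0}^{\res}$; hence each orbit is closed and the local dimension of $\rL_{3,0}^{\res}$ at a point equals the dimension of its orbit.

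Next I would compute Zariski tangent spaces by linearizing the Jacobi forms $Q_1, Q_2, Q_3$ from Proposition~\ref{P7} together with the restrictability equations, which---by Corollary~\ref{C3}, since the center is trivial on this stratum---state that $(\ad v)^p\in\mathrm{Im}(\ad)$ for each basis vector $v$. For a first-order deformation of a representative, the perturbation of $(\ad v)^p$ is the explicit sum $\sum_{i=0}^{p-1}(\ad v_0)^i M_v (\ad v_0)^{p-1-i}$ where $M_v$ encodes the deformation, and requiring that this perturbation remain in the deformed image of $\ad$ yields a concrete linear system. At the representatives of $\fs$, of $\mathfrak{l}_t$ with $\overline{t}\neq\overline{\pm 1}$, and of $\mathfrak{l}_1$ (e.g.\ with $(a,\dots,i)=(0,1,0,0,0,1,0,0,0)$), these equations combined with $\rd Q_1=\rd Q_2=\rd Q_3=0$ cut the tangent space down to the orbit dimension, yielding smoothness at those points.

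The crux is the orbit of $\mathfrak{l}_{-1}$. For $p$ odd, at the representative $(a,\dots,i)=(0,1,0,0,0,-1,0,0,0)$ one has $L_1=L_2=L_3=\det(M)=0$, so this point lies in $\rL_3^{(1)}\cap\rL_3^{(2)}$; the tangent space to $\rL_3$ already has dimension at least $7$, and one checks that the linearized restrictability conditions fail to reduce it to $5$, establishing singularity. For $p=2$, the representative $\mathfrak{l}_1=\mathfrak{l}_{-1}$ also lies in $\rL_3^{(1)}\cap\rL_3^{(2)}$ (since $L_2=b+f=2=0$), but the characteristic-$2$ form of the linearized restrictability conditions---notably those arising from $(\ad y)^2$ and $(\ad z)^2$---imposes additional independent constraints that bring the tangent space down to exactly dimension $3$, yielding smoothness. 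The main obstacle will be the careful bookkeeping of these linear conditions and the verification that they are independent of the $\rd Q_i$, with special care needed for the prime $p=3$ where intermediate combinatorial coefficients degenerate.
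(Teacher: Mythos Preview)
Your overall strategy---reduce to tangent space computations at one representative per orbit and compare with orbit dimensions---is exactly what the paper does, and the list of orbits you propose to check is correct.

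There is, however, a genuine error in your reduction step. You claim that ``no orbit of $\rL_{3,0}^{\res}$ contains another in its closure within $\rL_{3,0}^{\res}$; hence each orbit is closed and the local dimension of $\rL_{3,0}^{\res}$ at a point equals the dimension of its orbit.'' This is false for $p\neq 2$: the point $\mathfrak{l}_{-1}$ lies in $\rL_3^{(1)}=V(L_1,L_2,L_3)\simeq\mathbb{A}^6$ (since $a-i=b+f=d+h=0$ there), and $\rL_3^{(1)}$ is the closure of the $6$-dimensional orbit $o(\fs)$. Since both $\fs$ and $\mathfrak{l}_{-1}$ have trivial center and are restrictable, the orbit $o(\mathfrak{l}_{-1})$ \emph{is} contained in the closure of $o(\fs)$ inside $\rL_{3,0}^{\res}$. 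Consequently, your argument that the local dimension at $\mathfrak{l}_{-1}$ equals $5$ collapses: the $\rL_3^{(1)}$-side already contributes local dimension $6$, and a tangent space of dimension $6$ then no longer yields singularity by the inequality $\dim T\ge\dim$ alone.

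The paper's proof carries out the same tangent computation and finds $\dim T_{\mathfrak{l}_{-1}}\rL_{3,0}^{\res}=6$; it then concludes singularity by comparison with $\dim o(\mathfrak{l}_{-1})=5$. For this to be valid one must know that $\rL_{3,0}^{\res}$ is not locally irreducible of dimension $6$ at $\mathfrak{l}_{-1}$, which is ultimately because the $5$-dimensional component $\rL_{3,0}^{\res}\cap\rL_3^{(2)}$ (treated in the subsequent subsection) also passes through $\mathfrak{l}_{-1}$ and is not contained in the $\rL_3^{(1)}$-component. You should replace your orbit-closure claim with this two-component argument; without it, the singularity at $\mathfrak{l}_{-1}$ is not established by your method. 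The smoothness claims at $\fs$, $\mathfrak{l}_t$ ($t\neq\pm 1$), $\mathfrak{l}_1$, and the $p=2$ case are handled by your tangent computations as planned, and match the paper.
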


\begin{proof}
\begin{itemize}
    \item[-] If $p\neq 2$. We see in the classification Theorem~\ref{touteslesinfos} that the points of $\rL_{3,0}^{\res}$ are the points which are in the orbit of $\mathfrak{l}_t$ with $t \in \mathbb{F}_p$ and $t \neq 0$, and the points in the orbit of $\fs$. Let us start with $\mathfrak{l}_t$, i.e. let us denote for $t \in \mathbb{F}_p^\ast$ as before the $k$-point $\mathfrak{l}_t \defeq \Spec(k)\rightarrow \rL_{3,k}.$
We need to calculate the local ring of this point. We will show that $\mathcal{O}_{\rL_{3,0}^{\res},\mathfrak{l}_t}=\mathcal{O}_{\rL_3^{\res},\mathfrak{l}_t}$ is smooth.

Let us compute $T_{\rL_3^{\res},\mathfrak{l}_t}$. Let us denote by $N$ the $\mathbb{F}_p[\varepsilon]$-module $\mathbb{F}_p[\varepsilon]x\oplus\mathbb{F}_p[\varepsilon]y \oplus \mathbb{F}_p[\varepsilon]z$. Then we can write $$T_{\rL_3^{\res},\mathfrak{l}_t}=\big\{\text{Structures of Lie algebra on } N \text{, restrictable,  such that }N\otimes \mathbb{F}_p=\mathfrak{l}_t \big\}.$$
Let us use again the matrix notation for a Lie algebra structure over $N$. We denote it by $$\mathfrak{l}_{t,\varepsilon}\defeq \begin{pmatrix} a\varepsilon & d\varepsilon & g\varepsilon \\ 1+b\varepsilon & e\varepsilon & h\varepsilon\\
c\varepsilon & t +f\varepsilon & i\varepsilon\end{pmatrix}.$$

\noindent First of all, $\mathfrak{l}_{t,\varepsilon}$ is a Lie algebra structure if and only if its coefficients satisfy the conditions denoted by $Q_1,Q_2$ and $Q_3$ above, that is if and only if $$\left\{  \begin{array}{ll}
       (1+t)g=0\\
        d-t h=0\\
        ta+i=0.
    \end{array}
\right.$$

\noindent Then, $\mathfrak{l}_{t,\varepsilon}$ is in $T_{\rL_3^{\res}, \mathfrak{l}_t}$ if and only if it is restrictable. In order to see the conditions to be restrictable we will calculate $\ad_x^p$, $\ad_y^p$ and $\ad_z^p$ for any $p$ prime. Let us denote $$\beta\defeq 1+t+\dots + t^{p-1}= \left\{
    \begin{array}{ll}
        0 & \mbox{if } t=1 \\
        1 & \mbox{if } t \neq 1.
    \end{array}
\right.$$

\noindent Now, using the matrix notation in the basis $\{x,y,z\}$, we have 
$\ad_x=\begin{pmatrix} 0 & a\varepsilon & d\varepsilon \\ 
0 & 1 + b\varepsilon & e\varepsilon\\ 
0& c\varepsilon & t+ f\varepsilon\end{pmatrix}$ then for all $p$ prime $\ad_x^p=\begin{pmatrix} 0 & a\varepsilon & d\varepsilon \\ 
0 & 1 & \beta e\varepsilon\\ 
0& \beta c\varepsilon & t \end{pmatrix}.$

\noindent Likewise, $\ad_y=\begin{pmatrix}   -a\varepsilon & 0 & g\varepsilon\\
-1- b\varepsilon & 0 & h\varepsilon \\
-c\varepsilon & 0 & i\varepsilon\end{pmatrix}$ hence
$\ad_y^2=\begin{pmatrix} 0 & 0&0 \\
a\varepsilon & 0 & -g\varepsilon\\ 
0& 0 & 0\end{pmatrix}$
and for all $p>2$,
$\ad_y^p\equiv 0$.

\noindent Likewise, $\ad_z=\begin{pmatrix}  - d\varepsilon & -g\varepsilon & 0\\
-e\varepsilon & -h\varepsilon & 0 \\
-t - f\varepsilon & - i\varepsilon & 0\end{pmatrix}$, hence 
$\ad_z^2=\begin{pmatrix} 0 & 0&0 \\
0& 0 & 0 \\ 
t d\varepsilon & t g\varepsilon & 0 \end{pmatrix}$
and for all $p>2$, $\ad_z^p\equiv 0.$

Then $\ad_x^p$ is a linear combination of $\ad_x$, $\ad_y$ and $\ad_z$ if and only if it exists $\lambda=\lambda_0+\lambda_1 \varepsilon$, $\mu=\mu_0+\mu_1 \varepsilon$ and $\nu=\nu_0+\nu_1 \varepsilon$ such that:
\small
$$\begin{pmatrix} 0 & a \varepsilon & d \varepsilon \\ 
0 & 1 & \beta e\varepsilon\\ 
0& \beta c\varepsilon& t \end{pmatrix} =
\begin{pmatrix}  (-\mu_0 a-\nu_0 d)\varepsilon & (\lambda_0  a -\nu_0g)\varepsilon& (\lambda_0  d+\mu_0g)\varepsilon\\
-\mu_0- (b\mu_0+\mu_1+\nu_0e)\varepsilon & \lambda_0+(b\lambda_0+\lambda_1-\nu_0 h)\varepsilon & (\lambda_0 e+\mu_0h) \varepsilon\\ -t\nu_0- (\mu_0c+s\nu_0+t\nu_1)\varepsilon&  (\lambda_0 c -\nu_0 i)\varepsilon &t\lambda_0+(f\lambda_0+t\lambda_1+\mu_0i)\varepsilon \end{pmatrix}.$$
\normalsize
Then $\ad_x^p$ is a linear combination of $\ad_x$, $\ad_y$ and $\ad_z$ if and only if
$$\left\{  \begin{array}{ll}
        bt=f\\
        c=\beta c\\
        e=\beta e.
    \end{array}
\right.$$

Because $\ad_y^p\equiv \ad_z^p\equiv 0$, they are always linear combination of $\ad_x$, $\ad_y$ and $\ad_z$.

Hence we obtain the following conditions: 
$$(\ast)\left\{  \begin{array}{ll}
       (1+t) g= d-t h=i+t a=0\\
        bt-f= c-\beta c=e-\beta e=0.
    \end{array}
\right.$$

So we have to distinguish different cases. First let us suppose $t=-1$. Then the conditions~$(\ast)$ are equivalent to:

$$\left\{  \begin{array}{ll}
        d+h=0\\
        a-i=0\\
        b+f=0.
    \end{array}
\right.$$

Hence $\dim(T_{\rL_3^{\res},\mathfrak{l}_{-1}})=6$. But $\dim(o(\mathfrak{l}_{-1}))=5$ from Theorem~\ref{touteslesinfos}, hence the local ring of $\mathfrak{l}_{-1}$ is singular.

Let us suppose $t=1$. Then the conditions~$(\ast)$ are equivalent to $$\left\{  \begin{array}{ll}
        g=d- h=a+i=0\\
        b-f=c=e=0
    \end{array}
\right.$$
so $\dim(T_{\rL_3^{\res},\mathfrak{l}_1})=3=\dim(o(\mathfrak{l}_1))$. Then the point $\mathfrak{l}_1$ is smooth.

Then let us suppose $t \neq 1$ and $t \neq -1$. Then the conditions~$(\ast)$ are equivalent to:
$$\left\{  \begin{array}{ll}
        g=0\\
        d-th=i+ta=bt-f=0
    \end{array}
\right.$$

Hence $\dim(T_{\rL_3^{\res},\mathfrak{l}_t})=5$. But $\dim(o(\mathfrak{l}_t))=5$, hence the local ring of $\mathfrak{l}_t$ is regular.

Doing the same calculations for $\fs \in \rL_{3,0}^{\res}$, we obtain these conditions:
$$\left\{  \begin{array}{ll}
        a-i=0\\
        b+f=0\\
        d+h=0.\\
    \end{array}
\right.$$
Hence $\dim(T_{\rL_3^{\res},\fs})=6$. But $\dim(o(\fs))=6$, hence the local ring of $\fs$ is regular.
    \item[-] Let us suppose $p=2$. 
Then the only point in $\rL_{3,0}^{\res}$ is $\mathfrak{l}_1$. So using the same notations as before, we see in this case, the conditions are equivalent to 
$$\left\{  \begin{array}{ll}
       d-h=a+i=b-f=0\\
        c=e=g=0
    \end{array}
\right.$$
so $\dim(T_{\rL_3^{\res},\mathfrak{l}_1})=3$. But $\dim(o(\mathfrak{l}_{1}))=3$, hence the local ring of $\mathfrak{l}_{1}=\mathfrak{l}_{-1}$ is regular.
\end{itemize}

\end{proof}

\begin{notitle}{Study of \texorpdfstring{$\rL_{3,0}^{\res}$}{TEXT} in the first irreducible component}
We start by establishing a result on the scheme structure of $\rL_{3,0}^{\res}$ in the first irreducible component, in the case we choose a field
$k$ of characteristic $p\neq 2$. 
\end{notitle}
\begin{proposition}
The scheme $\rL_{3,0}\cap \rL_3^{(1)}$ is
reduced. Moreover, if $\car(k) \neq 2$, $$\rL_{3,0}^{\res}\cap \rL_3^{(1)}\simeq\rL_{3,0}\cap \rL_3^{(1)} \text{ as schemes.}$$
\end{proposition}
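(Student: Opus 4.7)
For the first assertion, my plan is as follows. The ideal $L = (a-i, b+f, d+h)$ is generated by a regular sequence of linear forms in $k[a,\dots,i]$, so $\rL_3^{(1)} = \Spec(k[a,\dots,i]/L) \simeq \A^6_k$ and in particular is integral and regular. Moreover, $\rL_{3,0}$ is an open subscheme of $\rL_3$, being the complement of the support of the universal center $Z(\mathbb{L}_3)$ (equivalently, the complement of $Z_0 = V(\Fitt_0(Z(\mathbb{L}_3)))$). Hence $\rL_{3,0} \cap \rL_3^{(1)}$ is an open subscheme of $\A^6_k$ and therefore reduced (in fact smooth and integral).

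For the second assertion, Theorem~\ref{Yrpz} tells us that $\rL_3^{\res}$ is a closed subscheme of $\rL_3$, so the natural inclusion
\[
\rL_{3,0}^{\res} \cap \rL_3^{(1)} \;=\; \rL_3^{\res} \cap \bigl(\rL_{3,0} \cap \rL_3^{(1)}\bigr) \;\hookrightarrow\; \rL_{3,0} \cap \rL_3^{(1)}
\]
is a closed immersion. Since the target is reduced by the first assertion, it suffices to show this closed immersion is surjective on the underlying set, i.e.\ that the Lie algebra parameterized by every geometric point of $\rL_{3,0} \cap \rL_3^{(1)}$ is restrictable. The linear forms $L_1, L_2, L_3$ coincide up to sign with the traces $\tr(\ad_y), \tr(\ad_x), \tr(\ad_z)$, so $\rL_3^{(1)}$ is the $\GL_3$-invariant locus on which the adjoint representation is trace-free. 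Running through the representatives listed in Theorem~\ref{touteslesinfos}, exactly four orbits meet $\rL_3^{(1)}$, namely $o(\fab_3)$, $o(\fh_3)$, $o(\fs)$ and $o(\mathfrak{l}_{-1})$. After intersecting with $\rL_{3,0}$ only $o(\fs)$ and $o(\mathfrak{l}_{-1})$ survive, and in characteristic $p \ne 2$ the same theorem asserts that both are restrictable ($\fs \simeq \fsl_2$ in this case, and $-1 \in \mathbb{F}_p \setminus \{0,1\}$).

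Combining this with the base change compatibility of $\rL_3^{\res}$ (Theorem~\ref{Yrpz}) and the density of closed points in the finite-type $k$-scheme $\rL_{3,0} \cap \rL_3^{(1)}$, the closed immersion above is a bijection on underlying sets; the reducedness of the target then forces the scheme-theoretic equality $\rL_{3,0}^{\res} \cap \rL_3^{(1)} = \rL_{3,0} \cap \rL_3^{(1)}$. The main (mild) subtlety is transferring the restrictability check from the algebraically closed geometric fiber back to~$k$, but this is automatic from the base change compatibility of $\rL_3^{\res}$ together with the standard fact that a closed subscheme of a reduced scheme whose underlying set equals the ambient space coincides with the ambient scheme.
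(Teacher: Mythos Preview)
Your proof is correct and follows essentially the same strategy as the paper: openness of $\rL_{3,0}$ in $\rL_3$ gives reducedness of $\rL_{3,0}\cap\rL_3^{(1)}$ as an open in $\A^6_k$, and then the closed immersion $\rL_{3,0}^{\res}\cap\rL_3^{(1)}\hookrightarrow\rL_{3,0}\cap\rL_3^{(1)}$ is an equality because it is surjective on geometric points and the target is reduced. Your identification of $L_1,L_2,L_3$ with $\pm\tr(\ad_y),\tr(\ad_x),\pm\tr(\ad_z)$, hence of $\rL_3^{(1)}$ as the $\GL_3$-invariant trace-free locus, is a pleasant addition not made explicit in the paper; it is what justifies checking only the listed representatives when determining which orbits meet $\rL_3^{(1)}$. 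One minor wording point: Theorem~\ref{Yrpz} does not literally give $\rL_3^{\res}$ as a closed subscheme of all of $\rL_3$ (it needs a flat center), but applied over the stratum $\rL_{3,0}$ it yields exactly the closed immersion you use.
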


\begin{proof}
Because $\rL_{3,0}$ is open in $\rL_3$, $\rL_{3,0}\cap \rL_3^{(1)}$ is open in the reduced irreducible component $\rL_3^{(1)}$, then it is reduced. Moreover, $$|\rL_{3,0}^{\res}\cap \rL_3^{(1)}|=\bigcup_{R_k\rightarrow k=\overline{k}}(\rL_{3,0}^{\res}\cap \rL_3^{(1)})(k)=\bigcup_{R_k\rightarrow k=\overline{k}}\rL_3^{\res}(k)\times_{S(k)}\rL_{3,0}(k)\times_{S(k)}\rL_3^{(1)}(k)$$ $$=\bigcup_{R_k\rightarrow k=\overline{k}}\rL_{3,0}(k)\times_{S(k)}\rL_3^{(1)}(k)=|\rL_{3,0}\cap \rL_3^{(1)}|.$$ 

Then $\rL_{3,0}^{\res}\cap
\rL_3^{(1)}$ is a closed subscheme of the reduced scheme $\rL_{3,0}\cap \rL_3^{(1)}$ with the same underlying set. Then they are equal as schemes.
\end{proof}

Now we study the $k$-points of this intersection of schemes. We have to do exactly the same calculus we have done in the previous subsection, but we have to change the conditions $Q_1,Q_2$ and $Q_3$ for the conditions $L_1$, $L_2$ and $L_3$. Then we find:

\begin{proposition}
If $\car(k)\neq 2$, in $\rL_{3,0}^{\res}\cap \rL_3^{(1)}$, the $k$-points $\mathfrak{l}_{-1}$ is singular, and $\fs$ is regular. If $\car(k)=2$, the scheme $\rL_{3,0}^{\res}\cap \rL_3^{(1)}$ is smooth.
\end{proposition}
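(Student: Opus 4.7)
The plan is to mimic the tangent-space computation carried out in the first theorem of this subsection, with the only modification that one now works inside the component $\rL_3^{(1)} = \Spec(\Z[a,\dots,i]/L)$ rather than in the ambient $\rL_3$. Concretely, the Jacobi ideal $I = (Q_1,Q_2,Q_3)$ is replaced by the linear ideal $L = (L_1,L_2,L_3) = (a-i,\, b+f,\, d+h)$; since its three generators are linear, they cut out three independent relations $a'-i' = 0$, $b'+f' = 0$, $d'+h' = 0$ in the tangent space at every base point.

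First I would use the classification of Subsection~\ref{classification}, together with the scheme isomorphism $\rL_{3,0}^{\res}\cap \rL_3^{(1)} \simeq \rL_{3,0}\cap \rL_3^{(1)}$ from the preceding proposition, to list the closed points to examine: for $p\neq 2$ these are the representatives of $o(\fs)$ and $o(\mathfrak{l}_{-1})$, while for $p=2$ only $\mathfrak{l}_1 = \mathfrak{l}_{-1}$ appears. At each such base point I would write an arbitrary tangent vector $\varepsilon(a',\dots,i')$ exactly as in the parametrization of the first theorem of this subsection, impose the three linear $L$-conditions on the perturbed structure constants, and then add the Jacobson restrictability conditions obtained by computing $\ad_x^p$, $\ad_y^p$, $\ad_z^p$ modulo $\varepsilon^2$.

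For $p\neq 2$ at $\mathfrak{l}_{-1}$: the system $(\ast)$ already computed at $t=-1$ simplifies to $a-i = b+f = d+h = 0$ (the factors $(1+t)$ and $(1-\beta)$ both vanish), which are exactly the $L$-conditions. Hence the tangent space has dimension $9-3 = 6$, while $\dim o(\mathfrak{l}_{-1}) = 5$, forcing singularity by the same argument as in the earlier theorem. At $\fs$: the $L$-conditions give $a' = i'$, $b'+f' = 0$, $d'+h' = 0$, and no further Jacobson constraint appears since $\rL_3^{(1)}$ is reduced and generically restrictable, so the scheme equality of the preceding proposition shows that restrictability imposes nothing extra; the tangent space has dimension $6 = \dim o(\fs)$, giving regularity. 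For $p=2$ at $\mathfrak{l}_1$: one has $\beta = 1+t = 0$, so the Jacobson conditions from $\ad_x^p$, $\ad_y^p$ and $\ad_z^p$ produce the extra constraints $c=e=g=0$ in addition to the $L$-relations, cutting the tangent space down to dimension $3 = \dim o(\mathfrak{l}_1)$ and yielding smoothness of the entire scheme.

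The main technical hurdle is the same one faced in the first theorem of Subsection~\ref{5.2}: the careful evaluation of $\ad_y^p$ and $\ad_z^p$ on the perturbed Lie algebra across the three regimes $p=2$, $p=3$ and $p\geq 5$, where the coefficient $\beta = 1+t+\dots+t^{p-1}$ behaves differently and certain mixed terms survive only in small characteristic. Once this bookkeeping is carried over verbatim from the previous proof, the substitution $Q_i \leadsto L_i$ is purely formal, and the comparison of tangent dimensions with orbit dimensions yields the stated dichotomy.
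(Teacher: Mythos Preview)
Your proposal is correct and follows essentially the same approach as the paper: replace the Jacobi conditions $Q_1,Q_2,Q_3$ by the linear conditions $L_1,L_2,L_3$ in the tangent-space computation already carried out, and compare the resulting dimensions with the orbit dimensions. Your use of the scheme isomorphism from the preceding proposition to shortcut the $p\neq 2$ cases is a legitimate simplification of what the paper does implicitly. One small remark: the trichotomy $p=2$, $p=3$, $p\ge 5$ you mention as a hurdle is not actually needed here (only $p=2$ versus $p>2$ matters in this stratum; the $p=3$ peculiarity shows up later at $\fh_3$).
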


\begin{proof}
\begin{itemize}
    \item[-] Let us suppose $p\neq 2$. We first look at the point $\fs$. We obtain, as before, these conditions:
    $$\left\{  \begin{array}{ll}
        a-i=0\\
        b+f=0\\
        d+h=0.
    \end{array}
\right.$$
Hence $\dim(T_{\rL_3^{\res},\fs})=6$, so the local ring of $\fs$ is regular.

Let us do the same for the point $\mathfrak{l}_{-1}$. Doing the same calculations we obtain $\dim(T_{\rL_3^{\res},\mathfrak{l}_{-1}})=~6$, so the local ring of $\mathfrak{l}_{-1}$ is singular.

    \item[-] If $p=2$, we have $\dim(T_{\rL_3^{\res},\mathfrak{l}_{1}})=3$ so $\mathfrak{l}_{1}$ is regular.
\end{itemize}
\end{proof}

\begin{notitle}{Study of \texorpdfstring{$\rL_{3,0}^{\res}$}{TEXT} in the second irreducible component}
\end{notitle}

\begin{theorem}
In the second irreducible component, all the $k$-points of $\rL_{3,0}^{\res}\cap \rL_3^{(2)}$ are smooth.
\end{theorem}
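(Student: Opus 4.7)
The plan is to run the same kind of tangent-space computation as in the preceding subsection, this time cutting by the defining ideal $J=(Q_1,Q_2,Q_3,\det(M))$ of $\rL_3^{(2)}$ instead of by $L=(L_1,L_2,L_3)$. Since $\GL_3$ acts equivariantly on everything in sight, it suffices to check smoothness at a single representative of each orbit contained in $\rL_{3,0}^{\res}\cap \rL_3^{(2)}$. Theorem~\ref{touteslesinfos} lists the orbits in $\rL_{3,0}^{\res}$; a direct inspection of the matrix $M$ at $\fs$ shows $\det(M)=\pm 1\ne 0$, so $o(\fs)\not\subset \rL_3^{(2)}$, while every $\mathfrak{l}_t$ ($t\in \mathbb{F}_p^\times$) does belong to $\rL_3^{(2)}$. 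Thus the orbits to examine are $o(\mathfrak{l}_t)$ for $t\in\mathbb{F}_p^\times$ (modulo $t\sim t^{-1}$); in characteristic $2$ only $o(\mathfrak{l}_1)$ remains.

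I will reuse the tangent vector parametrization $\mathfrak{l}_{t,\varepsilon}$ introduced in Subsection~\ref{5.2}. The Jacobi identities $Q_1=Q_2=Q_3=0$ together with the $(\AL1)$ conditions produced the linear system $(\ast)$ already recorded there. The single new equation is $\det(M)=0$; expanding the $3\times 3$ determinant of $M$ evaluated at $\mathfrak{l}_{t,\varepsilon}$ and reducing modulo $\varepsilon^2$ yields $\det(M)\equiv gt\,\varepsilon$. Hence the only extra linear constraint on a tangent vector is $gt=0$, i.e.\ $g=0$ whenever $t\ne 0$.

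I then conclude case-by-case. For $t\in\mathbb{F}_p\smallsetminus\{0,\pm 1\}$ the relation $g=0$ is already implied by the Jacobi relation $(1+t)g=0$ listed in $(\ast)$, so no new condition appears and $\dim T_{\rL_{3,0}^{\res}\cap \rL_3^{(2)},\mathfrak{l}_t}=5=\dim o(\mathfrak{l}_t)$. For $t=1$ in any characteristic, the constraints from $(\ast)$ already include $g=0$, whence $\dim T = 3 = \dim o(\mathfrak{l}_1)$. Finally, when $t=-1$ and $p\ne 2$, the Jacobi relation $(1+t)g=0$ is vacuous and the new condition $g=0$ genuinely cuts the $6$-dimensional tangent space of $\rL_3^{\res}$ computed in Subsection~\ref{5.2} down to a $5$-dimensional tangent space in $\rL_3^{\res}\cap \rL_3^{(2)}$, matching $\dim o(\mathfrak{l}_{-1})=5$. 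Combining these equalities with the standard chain $\dim o(\mathfrak{l}_t)\le \dim_{\mathfrak{l}_t}(\rL_{3,0}^{\res}\cap \rL_3^{(2)})\le \dim T$ forces equality throughout, which is smoothness at every $k$-point.

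The only real obstacle is the explicit computation of the $\varepsilon$-linear part of $\det(M)$ at $\mathfrak{l}_{t,\varepsilon}$. Once one sees that this miraculously reduces to the single monomial $gt\,\varepsilon$, the case $t=-1$---precisely the point that made $\rL_{3,0}^{\res}\cap \rL_3^{(1)}$ singular---is rescued in $\rL_3^{(2)}$ by this one extra linear relation, and all remaining cases are straightforward bookkeeping using the equations already established in Subsection~\ref{5.2}.
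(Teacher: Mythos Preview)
Your proof is correct and follows essentially the same approach as the paper: add the linearized equation $\det(M)=0$, which contributes precisely $gt\,\varepsilon$, to the system $(\ast)$ and recount tangent dimensions at each $\mathfrak{l}_t$. The paper's own proof is terser but identical in substance; your extra observations (that $\fs\notin\rL_3^{(2)}$ because $\det(M)=-1$ there, and that $g=0$ is exactly the missing relation at $t=-1$) are helpful clarifications of points the paper leaves implicit.
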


\begin{proof}
We can do the same proof as before, we just need to add the condition $\det(M)=0$. That is, if we keep the same notations as before, we need to add to the system $(\ast)$ the condition $gt=0$. Hence the new system is given by $$\left\{  \begin{array}{ll}
        g=d-t h=i+t a=0\\
        bt-f=c-\beta c=e-\beta e=0.
    \end{array}
\right.$$

So in this case, we obtain $$
\dim(T_{\rL_3^{\res},\mathfrak{l}_t}) = \left\{
    \begin{array}{ll}
        3 & \mbox{if } t=1 \\
        5 & \mbox{if } t \neq 1.
    \end{array}
\right.
$$                
\end{proof}

\begin{remark}
By a simple computation, we can see that any deformation of Lie algebras which are in the stratum
$\rL_{3,0}$ is centerless without any condition. It is because
the stratum $\rL_{3,0}$ is open in $\rL_3$.
\end{remark}

\subsection{In the stratum  \texorpdfstring{$\rL_{3,1}$}{TEXT}}\label{5.3}

Let us do the same calculations for the points of $\rL_{3,1}$. \begin{notitle}{Study of $\rL_{3,1}^{\res}$ in $\rL_3$}
\end{notitle}
\begin{proposition}\label{h3singular}
The $k$-point $\fh_3$  is singular in $\rL_{3,1}^{\res}$, and $\mathfrak{l}_0$ is smooth.
\end{proposition}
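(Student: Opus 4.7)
The plan is to compute the tangent spaces $T_{\rL_{3,1}^{\res},\fh_3}$ and $T_{\rL_{3,1}^{\res},\mathfrak{l}_0}$ by the same method as in Section~\ref{5.2}: parameterize a first-order deformation via the nine structure constants $a,b,c,d,e,f,g,h,i$ in $k[\varepsilon]/(\varepsilon^2)$, impose the linearized Jacobi relations $Q_1,Q_2,Q_3$, the linearized Jacobson restrictability conditions on $\ad_x^p,\ad_y^p,\ad_z^p$, and finally the linearized Fitting stratification condition forcing the center to remain locally free of rank $1$, then compare the resulting dimensions with the orbit dimensions $3$ and $5$ coming from Theorem~\ref{touteslesinfos}.

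For $\fh_3$, whose structure matrix has a single nonzero entry equal to $1$ in position $(1,3)$ (encoding $[y,z]=x$), the deformation $\fh_3+\varepsilon M$ has only one non-trivial first-order Jacobi relation, namely $b+f=0$, since every other monomial in $Q_1,Q_2,Q_3$ is a product of two first-order perturbations. The matrices $\ad_x$ and $\ad_z$ of the deformed bracket are of order $\varepsilon$, so their powers vanish and contribute no Jacobson constraint, whereas $\ad_y$ retains the unit entry in position $(1,3)$; iterating $\ad_y$ feeds the bottom row of $\ad_y^{p-1}$ into the top row of $\ad_y^p$ and produces one or two additional linear constraints depending on $p$. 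The Fitting condition is encoded by forcing the appropriate minors of the $3\times 9$ matrix of the adjoint action to vanish to first order. Because the $5$-dimensional orbit of $\mathfrak{l}_0$ has $\fh_3$ in its closure, $\dim T_{\rL_{3,1}^{\res},\fh_3}\geq 5>3=\dim o(\fh_3)$, proving singularity, exactly as in the analysis of $\mathfrak{l}_{-1}\in\rL_{3,0}^{\res}$.

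For $\mathfrak{l}_0$, encoded by $[x,y]=y$ with centre $\langle z\rangle$, the deformation analysis is analogous but now $\ad_x$ has the unit entry $(2,2)$, so cross-terms contribute at first order and the linearized $Q_1,Q_2,Q_3$ already give three independent relations. The Jacobson conditions amount to requiring $\ad_x^p=\ad_x$ (from the eigenvalues $0,1$ of $\ad_x|_{\langle y,z\rangle}$) together with $\ad_y^p=\ad_z^p=0$, producing a controlled collection of further constraints. The Fitting rank-$1$ condition on the center adds one hyperplane. Adding up, I expect to obtain $\dim T_{\rL_{3,1}^{\res},\mathfrak{l}_0}=5=\dim o(\mathfrak{l}_0)$, hence smoothness.

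The main obstacle is the explicit linearization of the Fitting stratification at the two base points: while Fitting ideals abstractly cut out the locus where the center is locally free of rank $r$, turning this into a concrete set of first-order linear equations demands careful bookkeeping of the $8\times 8$ minors of the deformed matrix of $\ad:L\to\End(L)$. A subsidiary difficulty, flagged by the introduction, is the case $p=3$, where the cube $\ad_y^3$ for the deformation of $\fh_3$ behaves exceptionally because of the vanishing of $\binom{3}{k}$ for $1\leq k\leq 2$, so a split into $p=2$, $p=3$, and $p\geq 5$ may well be necessary.
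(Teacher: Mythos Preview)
Your plan is the paper's plan: compute tangent spaces by first-order deformations of the structure constants, impose Jacobi, Jacobson and the Fitting condition, then compare with the orbit dimensions from Theorem~\ref{touteslesinfos}. Two of your concrete claims are wrong, however, and would throw off the count if you carried them through.

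At $\fh_3$ you say that $\ad_x$ and $\ad_z$ are of order $\varepsilon$. This is false for $\ad_z$: since $[y,z]=x$ we have $\ad_z(y)=-x$, an order-$1$ entry just like the one you noticed in $\ad_y$. Both $\ad_y$ and $\ad_z$ must be tracked. The paper's actual outcome is that for $p=2$ the Jacobson condition forces $b=c=e=0$ while Fitting is vacuous, and for $p\ge 3$ Jacobson is vacuous while Fitting forces $b=c=e=0$; together with Jacobi's single relation $b+f=0$ this gives tangent dimension $5>3$ in every characteristic. Your closure shortcut (that $\fh_3\in\overline{o(\mathfrak{l}_0)}$ inside $\rL_{3,1}^{\res}$, hence the local dimension is at least $5$) is a valid and slightly slicker alternative, but you assert the closure without proof. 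It is easily supplied: the family $[y,z]=x+ty$, $[x,y]=[x,z]=0$ has center $\langle x\rangle$ for all $t$, is isomorphic to $\mathfrak{l}_0$ for $t\neq 0$, and specializes to $\fh_3$ at $t=0$.

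At $\mathfrak{l}_0$ your expectation that the Fitting condition contributes one hyperplane is incorrect. Once Jacobi is imposed (three relations $d=g=i=0$), every first-order deformation already has center of rank exactly $1$, so the stratum condition is automatic. The only further constraint is the single Jacobson relation $f=0$, giving tangent dimension $9-4=5=\dim o(\mathfrak{l}_0)$ and hence smoothness. If you had imposed an extra Fitting hyperplane you would have obtained dimension $4$ and reached the wrong conclusion.
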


\begin{proof}
\begin{itemize}
    \item[-] For the point $\fh_3$, as in the previous section, let us denote by $\fh_{3,\varepsilon}$ a deformation of the Lie algebra $\fh_3$:
    $$\fh_{3,\varepsilon} \defeq \begin{pmatrix} a\varepsilon & d\varepsilon & 1+g\varepsilon \\ b\varepsilon & e\varepsilon & h\varepsilon\\
c\varepsilon & f\varepsilon & i\varepsilon\end{pmatrix}.$$
Then, $\fh_{3,\varepsilon}$ gives the constants of structure of a Lie algebra if and only if $b+f=0.$ Moreover, $\fh_{3\varepsilon}$ is restrictable if and only if:\begin{itemize}
    \item[-] if $p=2$: $b=c=e=0$
    \item[-] if $p=3$: there is no condition
    \item[-] if $p>3$: there is no condition.
\end{itemize}
For the end, the center $Z(\fh_{3,\varepsilon})$ is locally free of rank $1$ if and only if: \begin{itemize}
    \item if $p=2$: there is no condition
    \item if $p=3$: $b=c=e=0$
    \item if $p>3$: $b=c=e=0$
\end{itemize}

So to conclude we use the fact that $\dim(o(\fh_3))=3$.
    \item[-] For the point $\mathfrak{l}_0$ let us do the same. Then using the same notations, $\mathfrak{l}_{0,\varepsilon}$ gives the constants of structure of a Lie algebra if and only if $d=g=i=0$. Moreover, $\mathfrak{l}_{0,\varepsilon}$ is restrictable if and only if $f=0$.
For the end, the center $Z(\mathfrak{l}_{0,\varepsilon})$ is always locally free of rank $1$.
So we conclude using the fact that $\dim(o(\mathfrak{l}_0))=5$.
 \end{itemize}   
\commentaire{$A$ est dans $\rL_3^{\res}$:

$\ad_x=\begin{pmatrix} 0&a\varepsilon & d\varepsilon \\0& b\varepsilon & e\varepsilon & \\
0&c\varepsilon & f\varepsilon & \end{pmatrix}$ et 
$\ad_y=\begin{pmatrix} -a\varepsilon & 0 & 1+g\varepsilon \\ -b\varepsilon & 0 & h\varepsilon\\
-c\varepsilon & 0 & i\varepsilon\end{pmatrix}$ et $\ad_z=\begin{pmatrix}  -d\varepsilon & -1-g\varepsilon&0\\  -e\varepsilon & -h\varepsilon&0\\
 -f\varepsilon & -i\varepsilon&0\end{pmatrix}$.

Donc $\ad_x^p\equiv 0$ et $\ad_y^2=\begin{pmatrix} -c\varepsilon & 0 & (i-a)\varepsilon \\ 0 & 0 & -b\varepsilon\\
0 & 0 & -c\varepsilon\end{pmatrix}$ BREF}
\end{proof}

\begin{notitle}{Study of $\rL_{3,1}^{\res}$ in $\rL_3^{(1)}$}
\end{notitle}
\begin{proposition}
The $k$-point $\fh_3$ is smooth in $\rL_{3,1}^{\res} \cap \rL_3^{(1)}$.
\end{proposition}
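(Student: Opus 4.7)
The plan is to compute $T\defeq T_{\rL_{3,1}^{\res}\cap \rL_3^{(1)},\fh_3}$ by the same dual-number method as in Proposition~\ref{h3singular}, and then to compare its dimension with $\dim o(\fh_3)=3$ given by Theorem~\ref{touteslesinfos}. Starting from the deformation
$$\fh_{3,\varepsilon}=\begin{pmatrix} a\varepsilon & d\varepsilon & 1+g\varepsilon \\ b\varepsilon & e\varepsilon & h\varepsilon\\ c\varepsilon & f\varepsilon & i\varepsilon \end{pmatrix}$$
of Proposition~\ref{h3singular}, I already know the conditions coming from the Jacobi relation ($b+f=0$), from restrictability (empty if $p\neq2$, else $b=c=e=0$), and from the center being of rank $1$ (empty if $p=2$, else $b=c=e=0$). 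The new input is simply that a tangent vector lies in $\rL_3^{(1)}$ iff the three generators $L_1=a-i$, $L_2=b+f$, $L_3=d+h$ of the ideal $L$ vanish on it, i.e.
$$a=i,\qquad b=-f,\qquad d=-h.$$

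Next I would collect all these conditions characteristic by characteristic. In characteristic $p=2$, the restrictability condition already gives $b=c=e=0$; combined with $b+f=0$, this forces $b=c=e=f=0$ and the center-rank condition is then automatic. In any characteristic $p\ge 3$, restrictability is automatic, while the center-rank $1$ condition gives $b=c=e=0$, and again with $b+f=0$ we obtain $b=c=e=f=0$. Thus in every characteristic the surviving free parameters in $T$ are precisely $a$, $d$, $g$ (with $i=a$, $h=-d$, and $b=c=e=f=0$), so $\dim T = 3$.

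To conclude, I would invoke the fact that the orbit $o(\fh_3)$ is contained in $\rL_{3,1}^{\res}\cap \rL_3^{(1)}$ and is smooth (being a $\GL_3$-orbit), with tangent space at $\fh_3$ of dimension $3$ by Theorem~\ref{touteslesinfos}. Since that tangent space embeds into $T$ and both have dimension $3$, they coincide and $\fh_3$ is a smooth point of $\rL_{3,1}^{\res}\cap \rL_3^{(1)}$. I do not expect any genuine obstacle: the whole argument is an easy specialisation of the computation of Proposition~\ref{h3singular}, and the only point to watch is that the $p=2$ and the $p\ge 3$ cases arrive at the same conclusion via different intermediate conditions; the three component equations $L_1=L_2=L_3=0$ are precisely what is needed to kill the extra directions that made $\fh_3$ singular in the ambient scheme $\rL_{3,1}^{\res}$.
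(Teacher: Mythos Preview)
Your proposal is correct and follows exactly the same approach as the paper: one adds the defining equations $L_1=a-i=0$, $L_2=b+f=0$, $L_3=d+h=0$ of $\rL_3^{(1)}$ to the tangent-space conditions already computed in Proposition~\ref{h3singular}, and checks that the resulting space has dimension $3=\dim o(\fh_3)$. The paper's own proof is the one-line remark ``we have to add to the conditions found before the conditions $a=i$ and $d=-h$''; your write-up simply makes this explicit characteristic by characteristic.
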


\begin{proof}
We have to add to the conditions found before the conditions $a=i$ and $d=-h$.
\end{proof}

\begin{notitle}{Study of $\rL_{3,1}^{\res}$ in $\rL_3^{(2)}$}
\end{notitle}
\begin{proposition}
The $k$-point $\fh_3$ is singular in $\rL_{3,1}^{\res}\cap \rL_3^{(2)}$ and the point $\mathfrak{l}_0$ is smooth.
\end{proposition}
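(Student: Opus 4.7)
The plan is to carry out the same first-order deformation analysis as in Proposition~\ref{h3singular}, and then to notice that the extra equation $\det(M)=0$ cutting out $\rL_3^{(2)}$ inside $\rL_3$ imposes no new constraint at first order at either point. Concretely, since $\rL_{3,1}^{\res}\cap\rL_3^{(2)}\hookrightarrow\rL_{3,1}^{\res}$ is defined by the single additional equation $\det(M)=0$, the tangent spaces at $\fh_3$ and $\mathfrak{l}_0$ are obtained from those already computed in Proposition~\ref{h3singular} by further imposing the linearization of $\det(M)$ at the respective point.

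The key observation I would use is that the matrix $M$ has rank at most $1$ at both points: reading off the structure constants, only the $(1,2)$ entry $-g=-1$ of $M$ is nonzero at $\fh_3$, and only the $(2,3)$ entry $1+b=1$ is nonzero at $\mathfrak{l}_0$. Since the singular locus of the determinantal hypersurface $\{\det(M)=0\}\subset\mathbb{A}^9$ coincides with the locus of $3\times 3$ matrices of rank at most $1$, the polynomial $\det(M)$ vanishes to order at least $2$ at both points. This can also be checked directly by evaluating the nine partial derivatives of
\[
\det(M)=-ch^2+gbf+ei^2-gec+hbi-ihf
\]
at each point and observing that in every partial, each surviving monomial has at least one factor among the variables that vanish at $\fh_3$ (respectively $\mathfrak{l}_0$).

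Consequently, the linearization of $\det(M)=0$ is trivial, so
\[
T_{\fh_3}\bigl(\rL_{3,1}^{\res}\cap\rL_3^{(2)}\bigr)=T_{\fh_3}(\rL_{3,1}^{\res}),\qquad T_{\mathfrak{l}_0}\bigl(\rL_{3,1}^{\res}\cap\rL_3^{(2)}\bigr)=T_{\mathfrak{l}_0}(\rL_{3,1}^{\res}),
\]
both of dimension $5$ by Proposition~\ref{h3singular}. Comparing with the orbit dimensions $\dim o(\fh_3)=3$ and $\dim o(\mathfrak{l}_0)=5$ from Theorem~\ref{touteslesinfos} yields the conclusion: the strict inequality $5>3$ forces singularity at $\fh_3$, while the equality $5=5$ together with irreducibility of the orbit closure forces smoothness at $\mathfrak{l}_0$. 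I do not foresee any serious obstacle; the argument is essentially a drop-in from the preceding propositions, with the only genuine input being the rank-one observation that makes $\det(M)$ highly singular at both points, and which is uniform in the characteristic of $k$.
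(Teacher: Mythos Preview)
Your proposal is correct and follows essentially the same approach as the paper: the paper's one-line proof simply asserts that ``the condition $\det(M)=0$ is always satisfied for any deformation'' at both points, and you supply the conceptual reason (rank~$\le 1$ of $M$, hence vanishing of $\det(M)$ to order $\ge 2$) together with the explicit partial-derivative check. One small slip: at $\mathfrak{l}_0$ the nonzero entry of $M$ is the $(2,3)$ entry $b=1$, not ``$1+b=1$''; this does not affect your argument.
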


\begin{proof}
For both of those points, the condition $\det(M)=0$ is always satisfied for any deformation. 
\end{proof}

\subsection{In the stratum \texorpdfstring{$\rL_{3,3}$}{TEXT}}\label{5.4}

This case is really simple because the condition "to
be in the stratum $\rL_{3,3}$" implies, using the
same notations as in the previous subsections, that
all the coefficient of the matrix $\fab_{3,\varepsilon}$ 
are~$0$. Then, $\dim(T_{\rL_{3,3}^{\res},\fab_3})=0$, in
the whole scheme and in the irreducible components.
Then the point $\fab_3$ is smooth seen in 
$\rL_{3,3}^{\res}$.

\bigskip 

As stated in the introduction, we can apply the previous results of smoothness to the moduli space $\mathcal{G}_{3,r}$, and this gives the following result:
\begin{corollary}{\label{corogroup}} Let $k$ be an algebraically closed field of characteristic $p>0$. Then
$\mathcal{G}_{3,r}(k)$ splits in two irreducible components that we denote by $\mathcal{G}_{3,r}^{(1)}$ and $\mathcal{G}_{3,r}^{(2)}$, and we have: 
\begin{itemize}
    \item[-] If $p\neq 2$, $\mathcal{G}_{3,0}(k)$ is singular, but becomes
    smooth after intersection with $\mathcal{G}_{3,0}^{(2)}$, if $p\neq 2$, $\mathcal{G}_{3,0}(k)$ is smooth.
    \item[-] $G^p_{3,1}(k)$ is singular but becomes smooth when we
    intersect it with $\mathcal{G}_{3,1}^{(1)}$.
    \item[-] $\mathcal{G}_{3,2}(k)$ is empty and $G^p_{3,3}(k)$ is smooth.
\end{itemize}
\end{corollary}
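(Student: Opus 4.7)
The strategy is to transport the smoothness statements of Theorem~\ref{TD} from $\rL_{3,r}^{\res}$ to $\mathcal{G}_{3,r}$ through a chain of smooth and equivalence-of-categories arguments; no new geometric computation on the moduli of group schemes is needed. The key inputs are Theorem~\ref{TE} (extended equivalence $\Lie:\mathcal{G}_{n,r}\xrightarrow{\sim} p\text{-}\mathcal{L}ie_{n,r}$), Theorem~\ref{Yrpz} (the affine-space structure of $X\to S^{\res}$), the smoothness of $\GL_n$, and Theorem~\ref{TD} itself.

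First, I would form the following diagram of smooth morphisms of stacks/schemes over $k$: the quotient map $\rL_3^p\to p\text{-}\mathcal{L}ie_3$ is smooth because $\GL_3$ is smooth, and the forgetful map $\rL_3^p\to\rL_3^{\res}$ is smooth because, by Theorem~\ref{Yrpz}, after passage to the flattening stratification $\rL_{3,r}^p\to\rL_{3,r}^{\res}$ is an affine bundle under the vector bundle $\rE$. Combining these with the equivalence of categories of Theorem~\ref{TE}, smoothness of a point of $\mathcal{G}_{3,r}(k)$ is equivalent to smoothness of the corresponding point of $p\text{-}\mathcal{L}ie_{3,r}(k)$, which is equivalent to smoothness of a point of $\rL_{3,r}^{\res}$ lying above it.

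Next, I would deduce that $\mathcal{G}_{3,r}(k)$ inherits its decomposition into two irreducible components from that of $\rL_3$. Indeed, by Theorem~4.B of Subsection~\ref{algebraic_treatment}, $\rL_3$ has exactly the two irreducible components $\rL_3^{(1)}$ and $\rL_3^{(2)}$, both flat and Cohen-Macaulay of relative dimension $6$. Intersecting with the locally closed stratum $\rL_{3,r}^{\res}$, and then taking the image under the composition $\rL_{3,r}^p\to p\text{-}\mathcal{L}ie_{3,r}\xleftarrow{\sim}\mathcal{G}_{3,r}$, defines the components $\mathcal{G}_{3,r}^{(1)}$ and $\mathcal{G}_{3,r}^{(2)}$ (one must verify that these two pieces are nonempty and distinct for the relevant $r$ by exhibiting restrictable representatives on each side; this is visible on the orbit pictures in Subsection~\ref{picture}).

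Finally, translate each clause of Theorem~\ref{TD} to the group scheme side. Case $r=0$ with $p\neq 2$: $\rL_{3,0}^{\res}$ is singular along the orbit of $\mathfrak{l}_{-1}$, the singularity persists in $\rL_3^{(1)}$ but disappears in $\rL_3^{(2)}$, hence the same holds for $\mathcal{G}_{3,0}$ and $\mathcal{G}_{3,0}^{(2)}$. Case $r=0$, $p=2$: smoothness of $\rL_{3,0}^{\res}$ gives smoothness of $\mathcal{G}_{3,0}$. Case $r=1$: the orbit of $\fh_3$ provides the singular locus, which is resolved by intersecting with $\mathcal{G}_{3,1}^{(1)}$. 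Case $r=2$: $\rL_{3,2}^{\res}=\emptyset$ forces $\mathcal{G}_{3,2}=\emptyset$. Case $r=3$: $\rL_{3,3}^{\res}$ is smooth, hence so is $\mathcal{G}_{3,3}$. The only subtle point in the transfer, and the one I would verify carefully, is that taking intersections with irreducible components commutes with the equivalence; this follows from the fact that $\rL_{3,r}^p\to\mathcal{G}_{3,r}$ factors as two faithfully flat smooth maps, so both open and closed subsets correspond bijectively and smoothness is preserved in both directions.
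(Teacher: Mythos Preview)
Your proposal is correct and follows essentially the same approach as the paper's proof: both use the equivalence $\Lie:\mathcal{G}_{3,r}\simeq p\text{-}\mathcal{L}ie_{3,r}$ from Theorem~\ref{TE}, the smoothness of the quotient map $\rL_{3,r}^p\to p\text{-}\mathcal{L}ie_{3,r}$ (via smoothness of $\GL_3$), and the smoothness of the forgetful map $\rL_{3,r}^p\to\rL_{3,r}^{\res}$ (via Theorem~\ref{Yrpz}) to transport Theorem~\ref{TD} to the group-scheme side. The paper's proof is a terse three-line sketch of exactly this chain, while you spell out the transfer of irreducible components and the case-by-case translation more explicitly.
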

\begin{proof}
We have an equivalence of categories given by the functor $\Lie:
\mathcal{G}_{3,r}(k) \rightarrow p\text{-}\mathcal{L}ie_{3,r}$(k), moreover, the quotient
morphism $\rL_{3,r}^p\rightarrow p\text{-}\mathcal{L}ie_{3,r}$ is smooth. Then, because $\rL_{3,r}^p\rightarrow \rL_{3,r}^{\res}$ is smooth, we can
apply the results of the subsections \ref{5.2}, \ref{5.3} and \ref{5.4}.
\end{proof}

\addcontentsline{toc}{section}{Bibliography}

\bigskip

\noindent
Alice Bouillet,
{\sc Univ Rennes, CNRS, IRMAR - UMR 6625, F-35000 Rennes, France} \\
Email address: {\tt alice.bouillet@univ-rennes1.fr}


\begin{thebibliography}{99}
\bibitem[B98]{B98} {\sc L. Bianchi,}
{\it Sugli spazi a tre dimensioni che ammettono un gruppo
continuo di movimenti.}
Mem. Soc. Ital. delle Scienze (3) 11, 267-352 (1898). 

\bibitem[C79]{C79}{\sc R. Carles, }{\it Variétés des algèbres de Lie de dimension inferieure ou egale à 7.} C. R. Acad. Sci., Paris, Sér, 1979. A 289, 263-266.

\bibitem[CD84]{CD84} {\sc R. Carles, Y. Diakité,}
{\it Sur les variétés d’algèbres de Lie de dimension $\le 7$.}
J. Algebra 91, 53-63 (1984). 

\bibitem[DG70]{DG70} {\sc M. Demazure, P. Gabriel, }
{\it Groupes algébriques. Tome I: Géométrie algébrique, généralités, groupes commutatifs.} Masson and Cie, Éditeur, Paris; North-Holland Publishing Co, 1970.

\bibitem[E95]{E95}{\sc D. Eisenbud, }{\it Commutative algebra. With a view toward algebraic geometry.} Graduate Texts in Mathematics, 150. Springer-Verlag, 1995. xvi+785 pp.

\bibitem[EGA4]{EGA4}{\sc A. Grothendieck \it (with collaboration of \sc J.Dieudonné), }{\it Eléments de géometrie algébrique. }{\it Étude locale des schémas et des morphismes de schémas IV. (French)} Inst. Hautes Études Sci. Publ. Math. No. 32 (1967), 361 pp.
14.55

\bibitem[FH91]{FH91}{\sc W. Fulton, J. Harris, } {\it Representation theory. A first course.} Graduate Texts in Mathematics, 129. Readings in Mathematics. Springer-Verlag, 1991. xvi+551 pp.

\bibitem[FP87]{FP87} {\sc E. M. Friedlander, B. J. Parshall,}
{\it Limits of infinitesimal group cohomology}, in
Algebraic topology and algebraic K-theory (Princeton, N.J., 1983), 523--538,
Ann. of Math. Stud., 113, Princeton Univ. Press, 1987.

\bibitem[GW20]{GW20} {\sc U. Görtz, T. Wedhorn, }
{\it Algebraic geometry I. Schemes with examples and exercises.} 
Second edition. Springer Studium Mathematik—Master. Springer Spektrum, Wiesbaden, 2020. vii+625 pp.

\bibitem[Ja03]{Ja03} {\sc J. C. Jantzen,}
{\it Representations of algebraic groups}, Second edition,
Mathematical Surveys and Monographs 107, American Mathematical Society,
2003.

\bibitem[J62]{J62} {\sc N. Jacobson, }
{\it Lie algebras. } Interscience Tracts in Pure and Applied Mathematics, No. 10 Interscience Publishers (a division of John Wiley and Sons), 1962. ix+331 pp.

\bibitem[KN84]{KN84} {\sc A. A. Kirillov, Yu. A. Neretin, }
{\it The variety $A_n$ of structures of $n$-dimensional Lie algebras.} Some problems in modern analysis, 42–56, Moskov. Gos. Univ., Mekh.-Mat. Fak., Moscow, 1984.

\bibitem[L02]{L02}{\sc Q. Liu, }{\it Algebraic geometry and arithmetic curves,}
Oxford Graduate Texts in Mathematics,~6. Oxford Science Publications. Oxford University Press, 2002. xvi+576 pp. 

\bibitem[Macaulay2]{Macaulay2}{\sc D. R. Grayson, M. E. Stillman, } {\it Macaulay2} {a software system for research in algebraic geometry, available at http://www2.macaulay2.com/Macaulay2.}


\bibitem[M80]{M80} {\sc J. S. Milne, }
{\it \'Etale cohomology.} Princeton Mathematical Series 33, Princeton University Press, 1980. xiii+323 pp.

\bibitem[PS74]{PS74}{\sc C. Peskine, L. Szpiro, }{\it  Liaison des variétés algébriques. I.}
Invent. Math. 26, 1974. 271–302.

\bibitem[R11]{R11}{\sc M. Romagny, }{\it Composantes connexes et irréductibles en familles.} Manuscripta Math.
136, 1–32. 2011.

\bibitem[S07]{S07}{\sc H. Strade, }{\it Lie algebras of small dimension, }
Contemp. Math., 442, Amer. Math. Soc., 2007.

\bibitem[SF88]{SF88} {\sc H. Strade, R. Farnsteiner, }
{\it Modular Lie algebras and their representations.} Monographs and Textbooks in Pure and Applied Mathematics, 116. Marcel Dekker, Inc., 1988. x+301 pp.

\bibitem[SGA3]{SGA3}
{\it Schémas en groupes. } Séminaire de Géométrie Algébrique
du Bois Marie 1962--64. Dirigé par A. Grothendieck et M. Demazure avec la collaboration de M. Artin, J.-E. Bertin,
P. Gabriel, M. Raynaud et J-P. Serre. 
Documents Mathématiques 7, Société Mathématique de France, 2011.

\bibitem[SP22]{SP22} {\sc The Stacks Project Authors, }{\it Stacks Project.} Located at
\url{http://www.math.columbia.edu/algebraic_geometry/stacks-git}.

\bibitem[Su78]{Su78} {\sc J.B. Sullivan, }{\it Simply connected groups, the hyperalgebra, and Verma’s conjecture.} Amer. J. Math. 100 (1978), no. 5, 1015–1019.

\bibitem[V66]{V66}{\sc M. Vergne, }{\it  Réductibilité de la variété des algèbres de Lie nilpotentes.} C. R. Acad. Sci., Paris, Sér. A 263, 4-6, 1966.

\bibitem[Z39]{Z39}{\sc H. Zassenhaus, }{\it
Über Lie’sche Ringe mit Primzahlcharakteristik.} 
Abh. math. Sem. Hansische Univ. 13, 1-100 (1939).






\end{thebibliography}
\end{document}